\def\etal{et al.\ }
\newcommand{\beq}{\begin{equation}}
\newcommand{\eeq}{\end{equation}}
\newcommand{\bea}{\begin{eqnarray*}}
\newcommand{\eea}{\end{eqnarray*}}
\newcommand{\mycomment}[1]{}
\def\Z{{\mathbb Z}}
\newcommand{\cc}[2]{\overline{C}({#1}, #2)}
\newtheorem{lemma}{Lemma}
\newtheorem{theorem}[lemma]{Theorem}
\newtheorem{conjecture}[lemma]{Conjecture}
\newtheorem{claim}[lemma]{Claim}
\newtheorem{subclaim}{Claim}[lemma]
\theoremstyle{definition}
\newtheorem{definition}[lemma]{Definition}
\newtheorem{observation}[lemma]{Observation}
\newtheorem{construction}{Construction}
 \newenvironment{cit}
{
    \begin{list}{- \ }{}
        \setlength{\topsep}{0pt}
        \setlength{\parskip}{0pt}
        \setlength{\partopsep}{0pt}
        \setlength{\parsep}{0pt}         
        \setlength{\itemsep}{0pt} 
}
{
    \end{list} 
}
 \newenvironment{cem}
{
    \begin{enumerate}
        \setlength{\topsep}{0pt}
        \setlength{\parskip}{0pt}
        \setlength{\partopsep}{0pt}
        \setlength{\parsep}{0pt}         
        \setlength{\itemsep}{0pt} 
}
{
    \end{enumerate} 
}
\newcounter{casenum}	
\newcounter{subcasenum}	
\numberwithin{subcasenum}{casenum}
\newcounter{subsubcasenum}	
\numberwithin{subsubcasenum}{subcasenum}
\renewcommand{\thecasenum}{\arabic{casenum}}
\renewcommand{\thesubcasenum}{\thecasenum.\roman{subcasenum}}
\newcommand{\theSuperClaim}{\arabic{lemma}}
\newcounter{stagenum}
\renewcommand{\thestagenum}{1$\alpha$}
\newenvironment{mycases}
{
  \list{}{%
    \leftmargin0.5cm   
    \rightmargin0cm
  }
  \item\relax
	\setcounter{casenum}{0}
}
{	
	\endlist
}
\newenvironment{subcases}
{
  \list{}{%
    \leftmargin0.5cm   
    \rightmargin0cm
  }
  \item\relax
}
{	
	\endlist
}
\newenvironment{casefig}
{
\vspace{0em}
	\begin{figure}[H]
		\centering
}
{
	\end{figure}	
\vspace{-1em}
}
\def\casefigratio{1}
\newcommand{\case}[1]{
	\vspace{0.5em}
	
	\refstepcounter{casenum}
	\noindent\hspace{-0.5cm}\textit{Case \thecasenum: #1} 
}
\newcommand{\subcase}[1]{
	\vspace{0.25em}
	
	\refstepcounter{subcasenum}
	\noindent\hspace{-0.5cm}\textit{Case \thesubcasenum: #1} 
}
\newcommand{\subsubcaseitem}[0]{
	\refstepcounter{subsubcasenum}
	\noindent\hspace{-0.6cm}{(\alph{subsubcasenum})}
}
\begin{document}

\def\Stab{\operatorname{Stab}}
\def\cO{\mathcal{O}}
\def\cA{\mathcal{A}}
	
\title{Uniquely $K_r$-Saturated Graphs}
\author{Stephen G. Hartke$^{1,2}$ \and Derrick Stolee$^{1,2,3}$}
\date{\today}

\maketitle

\begin{abstract}
	A graph $G$ is \textit{uniquely $K_r$-saturated} if it contains no clique with $r$ vertices
		and if for all edges $e$ in the complement, $G+e$ has a unique clique with $r$ vertices.
    Previously, few examples of uniquely $K_r$-saturated graphs were known, and little was known about their properties.
	We search for these graphs by adapting orbital branching, 
		a technique originally developed for symmetric integer linear programs.	
	We find several new uniquely $K_r$-saturated graphs with $4 \leq r \leq 7$,
		as well as two new infinite families based on Cayley graphs for $\Z_n$ 
		with a small number of generators.
\end{abstract}

\footnotetext[1]{Department of Mathematics, University of Nebraska--Lincoln, Lincoln NE 68588-0130.\\ \texttt{$\{$hartke;s-dstolee1$\}$@math.unl.edu}}
\footnotetext[2]{Research supported in part by NSF Grant DMS-0914815.}
\footnotetext[3]{Department of Computer Science and Engineering, University of Nebraska--Lincoln, Lincoln NE 68588.}

\section{Introduction}

A graph $G$ is \textit{uniquely $H$-saturated} if there is no subgraph 
        of $G$ isomorphic to $H$,
		and for all edges $e$ in the complement of $G$ there is a unique subgraph in $G+e$ isomorphic to $H$\footnote[4]{%
	A technicality: for all $t < n(H)$, the complete graph $K_t$ is trivially uniquely $H$-saturated. We adopt the convention that always $n(G) \geq n(H)$.}.
Uniquely $H$-saturated graphs were introduced by Cooper, Lenz, LeSaulnier, Wenger, and West~\cite{CLLWW}
	where they classified uniquely $C_k$-saturated graphs for $k \in \{3,4\}$;
	in each case there is a finite number of graphs.
Wenger~\cite{Wenger,WengerC8} classified the uniquely $C_5$-saturated graphs 
	and proved that there do not exist any uniquely $C_k$-saturated graphs for $k \in \{6,7,8\}$.

In this paper, we focus on the case where $H = K_r$, the complete graph of order $r$.
Usually $K_r$ is the first graph considered for extremal and saturation problems.
However, we find that classifying all uniquely $K_r$-saturated graphs 
	is far from trivial, 
    even in the case that $r = 4$.

Previously, few examples of uniquely $K_r$-saturated graphs were known, and little was known about their properties. 
We adapt the computational technique of orbital branching into the graph theory setting to search for uniquely $K_r$-saturated graphs.  Orbital branching was originally introduced by Ostrowski, Linderoth, Rossi, and Smriglio~\cite{OrbitalBranching} to solve symmetric integer programs. 
We further extend the technique to use augmentations which are customized to this problem.
By executing this search, we found several new uniquely $K_r$-saturated graphs for $r \in \{4,5,6,7\}$
    and we provide constructions of these graphs to understand their structure.
One of the graphs we discovered is a Cayley graph, 
	which led us to design a search for Cayley graphs which are uniquely $K_r$-saturated.  Motivated by these search results, we construct two new infinite families of uniquely $K_r$-saturated Cayley graphs.

Erd\H{o}s, Hajnal, and Moon~\cite{ErdosHajnalMoon} studied the minimum number of edges in a $K_r$-saturated graph.  They proved that the only extremal examples are 
	the graphs formed by adding $r-2$ dominating vertices to an independent
	set; these graphs are also uniquely $K_r$-saturated.
However, if $G$ is uniquely $K_r$-saturated and has a dominating vertex, 
	then deleting that vertex results in a uniquely $K_{r-1}$-saturated graph.
To avoid the issue of dominating vertices, we define
	a graph to be $r$-\textit{primitive} if it is
	uniquely $K_r$-saturated and has no dominating vertex.
Understanding which $r$-primitive graphs exist
	is fundamental to characterizing uniquely $K_r$-saturated graphs.

Since $K_3 \cong C_3$, the uniquely $K_3$-saturated graphs were proven by Cooper {\etal}\cite{CLLWW}
	to be stars and Moore graphs of diameter two.
While stars are uniquely $K_3$-saturated, they are not $3$-primitive.
The Moore graphs of diameter two are exactly the 3-primitive graphs;
	Hoffman and Singleton~\cite{HoffmanSingleton} proved there are a finite number of these graphs.

David Collins and Bill Kay discovered the only previously known infinite family of
r-primitive graphs, that of complements of odd cycles: $\overline{C_{2r-1}}$ is $r$-primitive.
Collins and Cooper discovered two more $4$-primitive graphs of orders $10$ and $12$~\cite{CooperWenger}.
These two graphs are described in detail in Section \ref{sec:constructions}.

One feature of all previously known $r$-primitive graphs is that they are all regular.
Since proving regularity has been instrumental in previous characterization proofs~(such as \cite{CLLWW,HoffmanSingleton}), 
	there was a hope that $r$-primitive graphs are regular.
However, we present a counterexample: a $5$-primitive graph on $16$ vertices 
	with minimum degree 8 and maximum degree 9.

The major open question in this area concerns 
	the number of $r$-primitive graphs for a fixed $r$.

\begin{conjecture}
	For each $r \geq 3$, there are a finite number of $r$-primitive graphs.
\end{conjecture}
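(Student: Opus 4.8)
Since this statement is posed as the central open conjecture of the area rather than as a theorem, what follows is a proposed line of attack together with an honest accounting of where it stalls, not a complete argument. The natural template is the $r=3$ case, where the $3$-primitive graphs are exactly the Moore graphs of diameter two and the Hoffman--Singleton spectral argument forces the order into a finite set. The first step is to recover the algebraic skeleton of that argument for general $r$. Let $W$ be the $0/1$ incidence matrix whose rows are indexed by $V(G)$ and whose columns are indexed by the $(r-2)$-cliques of $G$, with $W_{v,C}=1$ exactly when $v\notin C$ and $v$ is adjacent to every vertex of $C$. A direct count shows that $(WW^{T})_{uv}$ equals the number of $K_{r-2}$'s in $N(u)\cap N(v)$; by $K_r$-freeness this entry is $0$ on edges, by unique saturation it is $1$ on non-edges, and on the diagonal it equals the number of $(r-1)$-cliques through $v$, which I call $t_v$. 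Hence
\[
  WW^{T} \;=\; \operatorname{diag}(t_v) + (J - I - A),
\]
which for $r=3$ (where $W=A$) reduces to the familiar Moore equation $A^2+A-(k-1)I=J$. The crucial structural fact is that the left-hand side is positive semidefinite, so $\operatorname{diag}(t_v)+(J-I-A)\succeq 0$.

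The second step is to exploit this identity spectrally. If $G$ were $k$-regular with every vertex lying on the same number $t$ of $(r-1)$-cliques, then restricting to $\mathbf 1^{\perp}$ the identity would read $t-1-\lambda\ge 0$ for every eigenvalue $\lambda$ of $A$, and one would then hope to combine the integrality of the eigenvalue multiplicities with the rank of $W$ to force the parameters, and hence the order, into a finite list, exactly as in the Moore case.

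The main obstacle is twofold, and it is precisely what the examples above expose. First, the regularity hypothesis on which the spectral pinch relies genuinely fails: the $5$-primitive graph on $16$ vertices exhibited earlier has minimum degree $8$ and maximum degree $9$, so $\operatorname{diag}(t_v)$ is not a scalar matrix and $\mathbf 1$ is no longer an eigenvector. Any proof must therefore either establish regularity (ruled out in general by that counterexample) or work directly with the non-commuting matrices $\operatorname{diag}(t_v)$ and $A$. Second, even granting regularity, for $r\ge 4$ the matrix $W$ has far more columns than rows, so its rank can be as large as $n$ and the semidefinite identity alone no longer bounds $n$. The essential work would be to prove an a priori bound on the degrees, equivalently on the clique-counts $t_v$, as a function of $r$ alone, perhaps through a local Ramsey-type analysis of the unique $K_{r-2}$ forced on each non-adjacent pair, and then to bound the diameter so that bounded degree yields bounded order. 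Supplying such a degree bound, or a substitute for the rank control that the Moore argument enjoys for free, is where I expect the genuine difficulty to lie.
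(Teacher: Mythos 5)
This statement is a conjecture, not a theorem: the paper leaves it open, noting that it is known only for $r=3$ (via Hoffman--Singleton) and is ``otherwise completely open,'' with the authors even expressing doubt about $r=4$ after seeing the sporadic examples. There is therefore no proof in the paper to compare yours against, and you were right not to manufacture one; an honest sketch of an approach with its failure points identified is the appropriate response here.

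As for the content of your sketch, the mathematics is correct as far as it goes. The identity $WW^{T}=\operatorname{diag}(t_v)+(J-I-A)$ is verified exactly as you say: an off-diagonal entry counts the $(r-2)$-cliques in $N(u)\cap N(v)$, which is $0$ on edges by $K_r$-freeness and $1$ on non-edges by unique saturation, and the diagonal entry counts $(r-1)$-cliques through $v$; for $r=3$ one has $W=A$ and recovers the Moore equation. Your two obstructions also match the paper's own evidence precisely: the graph $G_{16}^{(A)}$ of Construction \ref{const:G16A} is $5$-primitive with degrees $8$ and $9$, so the regularity hypothesis underlying any Hoffman--Singleton-style pinch genuinely fails, and for $r\geq 4$ the matrix $W$ is far from square, so positive semidefiniteness alone cannot bound $n$. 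One further remark worth making: the paper's infinite families (Theorems \ref{thm:twogenexample} and \ref{thm:threegenerators}) do not contradict the conjecture, since $r$ grows with $n$ along those families; for each fixed $r$ only finitely many examples are known. Your proposal stands as a reasonable research direction, but, like the paper, it leaves the conjecture unresolved.
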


This conjecture is true for $r = 3$~\cite{HoffmanSingleton} and otherwise completely open.
Before this work, it was not even known if there was more than one $r$-primitive graph for any $r \geq 5$.
After we discovered the graphs in this work (which lack any common structure and sometimes appear very strange), 
	we are unsure the conjecture holds even for $r = 4$.

In Section \ref{sec:summary}, we briefly summarize our results, including our computational method, the new sporadic $r$-primitive graphs, and our new algebraic constructions.
%


\section{Summary of results}
\label{sec:summary}

Our results have three main components.
First, we develop a computational method for generating uniquely $K_r$-saturated graphs.
Then, based on one of the generated examples, we construct two new infinite families of uniquely $K_r$-saturated graphs.
Finally, we describe all known uniquely $K_r$-saturated graphs, 
	including the nine new sporadic\footnote{We call a graph \textit{sporadic} if it has not yet been extended to an infinite family. Therefore, even though our search found 10 new graphs, one extended to an infinite family and so is not sporadic.} graphs found using the computational method.

\subsection{Computational method}

In Section \ref{sec:krcompletions}, we develop a new technique for exhaustively searching for uniquely $K_r$-saturated graphs on $n$ vertices.
The search is based on the technique of \emph{orbital branching} originally developed for use in symmetric integer programs by Ostrowski, Linderoth, Rossi, and Smriglio~\cite{OrbitalBranching, ConstraintOrbitalBranching}.
We focus on the case of constraint systems with variables taking value in $\{0,1\}$.
The orbital branching is based on the standard branch-and-bound technique where an unassigned variable is selected and the search branches into cases for each possible value for that variable.
In a symmetric constraint system, the automorphisms of the variables which preserve the constraints and variable values generate orbits of variables.
Orbital branching selects an orbit of variables and branches in two cases.
The first branch selects an arbitrary representative variable is selected from the orbit and set to zero.
The second branch sets all variables in the orbit to one.

We extend this technique to be effective to search for uniquely $K_r$-saturated graphs.
We add an additional constraint to partial graphs: 
	if a pair $v_i, v_j$ is a non-edge in $G$, 
	then there is a unique set $S_{i,j}$ containing $r-2$ vertices 
	so that $S_{i,j}$ is a clique and every edge between $\{v_i,v_j\}$ and $S_{i,j}$
	is included in $G$.
This guarantees that there is at least one copy of $K_r$ in $G + v_iv_j$ for all assignments of edges and non-edges
	to the remaining unassigned pairs.
The orbital branching method is customized to enforce this constraint,
	which leads to multiple edges being added to the graph in 
	every augmentation step.
By executing this algorithm, we found 10 new $r$-primitive graphs.

\subsection{New $r$-primitive graphs}
	
For $r \in \{4,5,6,7,8\}$, we used this method to exhaustively search for uniquely $K_r$-saturated graphs of order at most $N_r$, where $N_4 = 20$, $N_5 = N_6 = 16$, and 
	$N_7 = N_8 = 17$.
Table \ref{tab:rprimitivegraphs} lists the $r$-primitive graphs that were discovered in this search.
Most graphs do not fit a short description and are labeled $G_{N}^{(i)}$, where $N$ is the number of vertices and $i \in \{ A, B, C \}$ distinguishes between graphs of the same order.

\begin{table}[htp]
	\centering
	\scalebox{0.9}{
	\renewcommand\arraystretch{1.5}
	\begin{tabular}{c||c|c|c|c|c|c}
	 $n$ &  13 & 15  & 16 & 16 & 17 & 18 \\
	 \hline
	 $r$ &  4 & 6  &  5  & 6 & 7  & 4\\
	 \hline&&&&&&\\[-3ex]
	 Graphs &  
	 	$G_{13}$, Paley$(13)$  & 
		$G_{15}^{(A)}, G_{15}^{(B)}$ & 
		$G_{16}^{(A)}, G_{16}^{(B)}$ & 
		$G_{16}^{(C)}$ &
		$\cc{\Z_{17}}{\{1,4\}}$ &
		$G_{18}^{(A)}, G_{18}^{(B)}$
	\end{tabular}}
	
	\caption{\label{tab:rprimitivegraphs} Newly discovered $r$-primitive graphs.}
\end{table}

In all, ten new graphs were discovered to be uniquely $K_r$-saturated by this search.
Explicit constructions of these graphs are given in Section \ref{sec:constructions}.
Two graphs found by computer search are vertex-transitive and have a prime number of vertices.
Observe that vertex-transitive graphs with a prime number of vertices are Cayley graphs.
One vertex-transitive 4-primitive graph is the Paley graph
	of order 13 (see \cite{Paley}).
The other vertex-transitive graph is 7-primitive 
	on 17 vertices and is 14 regular.
However, it is easier to understand its complement, which is the Cayley graph for $\Z_{17}$ generated by $1$ and $4$.
This graph is listed as $\cc{\Z_{17}}{\{1,4\}}$ in Table \ref{tab:rprimitivegraphs}
	and is the first example of our new infinite families, described below.

\subsection{Algebraic Constructions}

For a finite group $\Gamma$ and a generating set $S \subseteq \Gamma$, 
	let $C(\Gamma, S)$ be the \emph{Cayley graph} for $\Gamma$ generated by $S$:
	the vertex set is $\Gamma$ and two elements $x, y \in \Gamma$ are adjacent
	if and only if there is a $z \in S$ where $x = yz$ or $x = yz^{-1}$.
When $\Gamma \cong \Z_n$, the resulting graph is also called a \emph{circulant graph}.
The cycle $C_n$ can be described as the Cayley graph of $\Z_n$ generated by $1$.
Since $\overline{C_{2r-1}}$ is $r$-primitive and
	we discovered a graph on $17$ vertices whose complement is a Cayley graph
	with two generators, 
	we searched for $r$-primitive graphs when restricted to complements of Cayley graphs
	with a small number of generators.

For a finite group $\Gamma$ and a set $S \subseteq \Gamma$,
	the \emph{Cayley complement} $\overline{C}(\Gamma,S)$ is the complement
	of the Cayley graph $C(\Gamma, S)$.
We restrict to the case when $\Gamma = \Z_n$ for some $n$,
	and the use of the complement allows us to use a small number of 
	generators while generating dense graphs.

We search for $r$-primitive Cayley complements by enumerating all small generator sets $S$, then iterate over $n$ where $n \geq 2\max S + 1$ and build $\cc{\Z_n}{S}$.
If $\cc{\Z_n}{S}$ is $r$-primitive for any $r$, it must be for $r = \omega(\cc{\Z_n}{S}) + 1$,
	so we compute this $r$ using Niskanen and \"Osterg\r{a}rd's \emph{cliquer} library~\cite{cliquer}.
Also using \emph{cliquer}, we count the number of $r$-cliques in 
	$\cc{\Z_n}{S} + \{0,i\}$ for all $i \in S$.
Since $\cc{\Z_n}{S}$ is vertex-transitive, this provides sufficient information to determine if $\cc{\Z_n}{S}$ is $r$-primitive.
The successful parameters for $r$-primitive Cayley complements
	with $g$ generators
	are given in Tables \ref{tab:twogens} ($g=2$),
	\ref{tab:threegens} ($g=3$), and
	\ref{tab:moregens} ($g\geq 4$).

\def\linegap{0.4pt}
\begin{table}[tp]
	\centering
	\mbox{
	\subfigure[\label{tab:twogens}Two Generators]{
	\begin{tabular}[H]{cccc}
	$t$ & $S$ & $r$ & $n$\\
	\hline
		\hline
2 & $\{ 1,4\}$ & 7 & 17 \\[\linegap]
3 & $\{ 1,6\}$ & 16 & 37 \\[\linegap]
4 & $\{ 1,8\}$ & 29 & 65 \\[\linegap]
5 & $\{ 1,10\}$ & 46 & 101 \\[\linegap]
6 & $\{ 1,12\}$ & 67 & 145 
	\end{tabular}
	}
	\subfigure[\label{tab:threegens}Three Generators]{
		\begin{tabular}[H]{cccc}
			$t$ & $S$ & $r$ & $n$\\
			\hline
		\hline
2 & $\{1, 5, 6\}$ & 9 & 31\\[\linegap]
3 & $\{1, 8, 9\}$ & 22 & 73\\[\linegap]
4 & $\{1, 11, 12\}$ & 41 & 133\\[\linegap]
5 & $\{1, 14, 15\}$ & 66 & 211\\[\linegap]
6 & $\{1, 17, 18\}$ & 97 & 307 
		\end{tabular}
	}
	}
	\subfigure[\label{tab:moregens}Sporadic Cayley Complements]{
	\begin{tabular}[H]{cccc}
		$g$ & $S$ & $r$ & $n$  \\
		\hline
		\hline
		3 & $\{1, 3, 4\}$ & $4$ & $13$ \\
		\hline
		\multirow{2}{*}{$4$} 
			& $\{  1,   5,   8,  34 \}$ 
			& \multirow{2}{*}{$28$}
			&  \multirow{2}{*}{$89$}  \\
		    &$\{ 1, 11, 18, 34 \}$ &&\\
		\hline
		$5$ & $\{1, 5, 14, 17, 25 \}$  & $19$& $71$ \\
		\hline
		$5$ & $\{1, 6, 14, 17, 36 \}$ & $27$ & $101$ \\
		\hline
		$6$ & $\{   1,   6,  16,  22,  35,  36 \}$ & $21$ & $97$ \\
		\hline
		$6$ & $\{   1,   8,  23,  26,  43,  64 \}$ & $54$ & $185$ \\
		\hline
		$7$ & $\{   1,  20,  23,  26,  30,  32,  34 \}$ & $15$  & $71$\\
		\hline
		$8$ & $\{ 1,  8,  12,  18,  22,  27,  33,  47 \}$ & $20$ & $97$ \\
		\hline
		$9$ & $\{   1,   4,  10,  16,  25,  27,  33,  40,  64 \}$ & $28$ & $133$  
	\end{tabular}
	}
	\caption{\label{tab:largecayley}Cayley complement parameters for $r$-primitive graphs over $\Z_n$.}
\end{table}

For two and three generators, a pattern emerged in the generating sets
	and interpolating the values of $n$ and $r$ resulted 
	in two infinite families of $r$-primitive graphs:

\begin{theorem}
	\label{thm:twogenexample}
	Let $t \geq 2$ and set $n = 4t^2+1, r = 2t^2-t+1$.
	Then, $\cc{\Z_n}{\{1,2t\}}$ is $r$-primitive.
\end{theorem}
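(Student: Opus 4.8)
The plan is to pass to the complementary circulant $H = C(\Z_n,\{1,2t\})$, whose edges are exactly the non-edges of $G = \cc{\Z_n}{\{1,2t\}}$, so that cliques of $G$ are precisely independent sets of $H$. Two reductions are immediate from the arithmetic $n = 4t^2+1$, i.e. $(2t)^2 \equiv -1 \pmod n$: multiplication by $2t$ carries the connection set $\{\pm1,\pm2t\}$ to itself while interchanging the two generators, so it is an automorphism of $H$ sending every difference-$1$ edge to a difference-$2t$ edge. Hence all non-edges of $G$ lie in a single orbit and it suffices to analyze adding the edge $\{0,1\}$. Moreover $H$ is $4$-regular (the four residues $\pm1,\pm2t$ are distinct for $t\ge 2$), so $G$ is $(n-5)$-regular and has no dominating vertex; thus $r$-primitivity reduces to proving (i) $\alpha(H)=2t^2-t=r-1$ and (ii) there is a \emph{unique} $r$-set that meets the edge $\{0,1\}$ and is otherwise $H$-independent.

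For the upper bound in (i) I would use a windowing argument. Any $2t+1$ cyclically consecutive residues $j,j+1,\dots,j+2t$ induce in $H$ the generator-$1$ path together with the single chord $\{j,j+2t\}$ from the generator-$2t$ edge, i.e. a copy of $C_{2t+1}$, whose independence number is $t$. Averaging $|A\cap W|\le t$ over all $n$ windows $W$ (each residue lies in $2t+1$ of them) gives $(2t+1)\,|A|\le nt$, and since $\tfrac{nt}{2t+1}=2t^2-t+\tfrac{2t}{2t+1}<2t^2-t+1$, we conclude $\alpha(H)\le 2t^2-t$, so $G$ is $K_r$-free. Both the matching lower bound and the existence half of saturation come from one explicit construction: set $d=2t-1$. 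A short computation gives $d\,(2t^2-t)\equiv 1\pmod n$, so $d$ is a unit and $A=\{\,kd\bmod n : 0\le k\le r-1\,\}$ is an arithmetic progression of $r$ distinct residues. Its differences are the $kd$ with $1\le k\le r-1$; solving the four congruences $kd\equiv\pm1,\pm2t$ shows the only admissible $k$ in this range is $k=r-1$, giving $kd\equiv 1$. Thus $A$ is $H$-independent except for the lone edge $\{0,(r-1)d\}=\{0,1\}$: removing the vertex $1$ leaves a clique of size $r-1$ in $G$ (so $\omega(G)=r-1$), while $A$ itself is a $K_r$ in $G+\{0,1\}$.

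The heart of the theorem is the uniqueness in (ii). A useful reorganization is to relabel vertices by position along the progression, $v_k=kd$: since $d^{-1}=r-1$ and $2t\,d^{-1}\equiv r$, this turns $H$ into $C(\Z_n,\{r-1,r\})$, under which the target clique becomes the interval $\{0,1,\dots,r-1\}$ and the added edge becomes the pair of its endpoints. Equivalently, independent sets of $H$ become subsets of $\Z_n$ with no two points at circular distance $a$ or $a+1$, where $a=r-1$ and $n=2a+(2t+1)$. To force the interval I would sharpen the windowing argument: permitting the edge $\{0,1\}$ raises the per-window bound from $t$ to $t+1$ only on the $2t$ windows containing both $0$ and $1$ (there $C_{2t+1}$ loses a path-edge and becomes $P_{2t+1}$, with independence number $t+1$). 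This yields $|A|\le r$ with total window deficiency exactly $2t-1$.

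This near-tightness is what I expect to pin the configuration down, and making it airtight is the main obstacle. Away from $\{0,1\}$ every window must be a maximum independent set of its $C_{2t+1}$, which forces the period-$(2t+1)$, ``almost alternating'' pattern (each such maximum set has all gaps $2$ except one gap $3$); tracking how the mismatch $n\equiv 2\pmod{2t+1}$ propagates around the cycle should leave the single diagonal progression $\{kd\}$ as the only globally consistent possibility. The delicate point is to rule out \emph{every} competing near-maximum independent set rather than merely to bound the size, and I would lean on the clean interval reformulation above to organize that case analysis.
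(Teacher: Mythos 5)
Your reduction to the single non-edge $\{0,1\}$ (multiplication by $2t$ is an automorphism since $(2t)^2\equiv -1 \pmod n$), your window-averaging bound $\alpha(H)\le 2t^2-t$ (every $2t+1$ consecutive residues induce a $C_{2t+1}$ in $H$), and your explicit progression $A=\{kd \bmod n : 0\le k\le r-1\}$ with $d=2t-1$ are all correct. The first two run parallel to the paper's argument: the paper phrases the same averaging with ``frames'' of $t$ consecutive blocks (gaps between consecutive clique elements) instead of windows of $2t+1$ consecutive residues, and gets the same contradiction $tn < (2t+1)r$. The explicit arithmetic-progression witness is a nice addition; the paper only obtains it implicitly at the end of its uniqueness analysis.

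The gap is in uniqueness, which is where the paper spends most of its effort. Your assertion that ``away from $\{0,1\}$ every window must be a maximum independent set of its $C_{2t+1}$'' does not follow from your counting: as you yourself compute, the total window deficiency is $2t-1$, not $0$, so up to $2t-1$ of the overlapping windows may fail to be maximum, and no local periodic pattern is forced. Moreover this slack is intrinsic to fixed windows: since $n\equiv 2 \pmod{2t+1}$, windows of length $2t+1$ cannot tile $\Z_n$, so no choice of fixed disjoint windows gives an exact count either. The paper escapes precisely this problem by making the decomposition adaptive to the clique: it takes $2t-1$ pairwise \emph{disjoint} frames of $t$ consecutive blocks each, which together cover every element of $\Z_n$ except the singleton block $B_0=\{0\}$. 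This yields the exact identity $n-1=\sum_F \sigma(F)$ against the lower bound $(2t+1)(2t-1)=n-2$, i.e.\ slack exactly $1$: every frame has gap-sum $2t+1$ except one exceptional frame with sum $2t+2$, and a short case analysis (the exceptional frame must consist of two $3$-blocks at its two ends, and the $3$-blocks of the whole clique must be spaced exactly $t-1$ apart) then pins the clique down to your progression. So your outline stops exactly where the real work begins; to complete it along your lines you would need to replace the overlapping windows by a decomposition that is exact relative to the clique's own gap structure --- which is, in effect, the paper's proof.
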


\begin{theorem}
	\label{thm:threegenerators}
	Let $t \geq 2$ and set $n = 9t^2-3t+1, r = 3t^2-2t+1$.
	Then, $\cc{\Z_n}{\{1,3t-1, 3t\}}$ is $r$-primitive.
\end{theorem}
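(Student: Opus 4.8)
The plan is to pass to the complementary picture. A clique in $G=\cc{\Z_n}{S}$, with $S=\{1,3t-1,3t\}$, is precisely a set of residues no two of which differ mod $n$ by an element of $\pm S=\{\pm 1,\pm(3t-1),\pm 3t\}$; that is, a clique in $G$ is an independent set in the circulant $C=C(\Z_n,S)$. Hence $\omega(G)=\alpha(C)$, and the arithmetic heart of the theorem is the claim $\alpha(C)=r-1=3t^2-2t$, together with a uniqueness statement for extensions. I would first record the key identity $n=3t(3t-1)+1$, which gives $(3t)(3t-1)\equiv -1\pmod n$ and has two consequences I intend to exploit throughout.

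\emph{First consequence (symmetry).} The multiplier $\mu\colon x\mapsto 3t\,x \pmod n$ is an automorphism of $C$ (hence of $G$) that cyclically permutes the three generator classes: $1\mapsto 3t\mapsto 3t-1\mapsto -1$, where $-1$ is again the class of $1$. Together with the translations $x\mapsto x+c$, this shows $G$ is vertex-transitive and that $\mu$ sends the non-edge $\{0,s\}$ to $\{0,3t\,s\}$, cycling through $s\in\{1,3t-1,3t\}$. Consequently it suffices to verify the unique-saturation condition for a single representative non-edge, say $\{0,1\}$; the other two follow by applying $\mu$.

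\emph{Second consequence (geometry).} I would introduce lattice coordinates via $\phi(a,b)=3t\,a-(3t-1)b \pmod n$. Under $\phi$ the moves $a\mapsto a+1$, $b\mapsto b+1$, and $(a,b)\mapsto(a+1,b+1)$ realize the three generators $3t$, $3t-1$, and $3t-(3t-1)=1$, so $C$ is a circulant realization of the toroidal triangular grid whose edges run in these three directions. Since $\gcd(3t,n)=1$ the map $\phi$ is onto $\Z_n$, and because the triangular grid is canonically $3$-colorable with independent color classes, one class yields an independent set of size about $n/3$; choosing the fundamental domain and color class carefully produces an explicit clique of $G$ of size exactly $3t^2-2t$, and the wraparound discrepancy ($n=3t(3t-1)+1$ rather than $3t(3t-1)$) is what lowers $n/3$ to $3t^2-2t$. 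Proving the matching upper bound $\alpha(C)\le 3t^2-2t$, i.e.\ that no independent set beats the color-class construction once the imperfect wrap is accounted for, is the step I expect to be the main obstacle; I would attack it by a weighting/discharging argument on the three gap-types around the cycle, or equivalently by bounding an independent set's intersection with the color classes of the grid.

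With $\omega(G)=r-1$ established, $G$ is automatically $K_r$-free. For unique saturation I would, after reducing to the non-edge $\{0,1\}$ by $\mu$, translate the condition ``$W\cup\{0,1\}$ is a $K_r$ in $G+\{0,1\}$'' into: $W$ is an independent set in $C$ of size $r-2=3t^2-2t-1$ whose elements are non-adjacent in $C$ to both $0$ and $1$, so that $W\cup\{0,1\}$ is independent in $C$ except for the single forbidden pair $\{0,1\}$. Working in the $\phi$-coordinates, such a $W$ must be a near-maximum color class forced to straddle the edge $\{0,1\}$, and the rigidity of maximum independent sets in the triangular grid (again modulo the wrap) should pin $W$ down uniquely; this uniqueness is the second place where the imperfect wraparound must be controlled. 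Finally, $G$ has no dominating vertex: it is vertex-transitive and $S\neq\emptyset$, so $G\neq K_n$ and no vertex is adjacent to all others. Combining these facts gives that $G$ is $r$-primitive, exactly as in the two-generator analogue (Theorem \ref{thm:twogenexample}).
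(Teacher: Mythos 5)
Your symmetry reduction is correct and is exactly the paper's: multiplication by $3t$ permutes $\pm S$ (since $3t\cdot 3t\equiv 3t-1$ and $3t(3t-1)\equiv -1 \pmod n$), so together with translations it suffices to treat the single non-edge $\{0,1\}$. But from that point on your proposal is a plan rather than a proof, and the two items you defer are precisely the entire content of the theorem. The paper devotes its full Section 4.2 to them: the bound $\omega(G)<r$ is proved by a two-stage discharging argument (charges $|B_j|-3$ on the gaps ("blocks") of a putative clique, redistributed via maps $\varphi_2,\psi_2$ induced by the generators, then re-aggregated over windows of $t$ consecutive blocks ("frames")); and uniqueness of the $r$-clique in $G+\{0,1\}$ requires first an enumerative argument assuming no gap equals $2$, and then several more discharging claims with heavy case analysis (Claims 11--14 and their sub-claims) just to rule out 2-gaps. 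Saying you would "attack it by a weighting/discharging argument" and that rigidity "should pin $W$ down uniquely" names the right genre of argument but supplies none of it; nothing in your writeup certifies either the value $\alpha(C(\Z_n,S))=3t^2-2t$ or the uniqueness of the extension.

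Moreover, the one concrete piece of intuition you do offer for the lower bound is wrong as stated. The proper 3-coloring $c(a,b)=a+b \pmod 3$ of the triangular grid does \emph{not} descend to the quotient $\Z_n$: the kernel lattice of $\phi(a,b)=3ta-(3t-1)b$ contains $(3t-1,3t)$, and $(3t-1)+3t\equiv 2\pmod 3$. Equivalently, the generator $3t$ is divisible by $3$, so arithmetic-progression-type "color classes" with common gap $3$ are not independent. This is not a boundary effect of size $O(1)$: there is no independent set of size near $n/3\approx 3t^2-t$, the true maximum being $3t^2-2t$, a deficit of order $t$. The extremal sets instead have gap sequence $3,3,\dots,3,4$ repeating with period $t$ (the $3t$ 4-gaps are what absorb the incompatibility of the coloring with the wrap), and identifying and certifying this structure -- rather than a color class -- is exactly where the paper's discharging machinery is needed. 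So the proposal contains a genuine gap: the reduction is right, but the heart of the proof is missing, and the heuristic offered in its place fails.
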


An important step to proving these Cayley complements are $r$-primitive
	is to compute the clique number.
Computing the clique number or independence number of a Cayley graph 
	is very difficult, as many papers study this question~\cite{GreenCayley,KSUnitaryCayley}, 
	including in the special cases of circulant graphs~\cite{BICliqueCirculant,BHFractionalRamsey,HoshinoCirculant,XXYCirculant}
	and Paley graphs~\cite{BEHWPaleySquares,BlokhuisPaleySquares,BDRPaley,CohenPaley}.
Our enumerative approach to Theorem~\ref{thm:twogenexample} and discharging approach to
	Theorem~\ref{thm:threegenerators} provide a new perspective on computing these values.

It remains an open question if an infinite family of Cayley complements $\cc{\Z_n}{S}$ exist
	for a fixed number of generators $g = |S|$ where $g \geq 4$.
For all known constructions with $g \neq 4$, 
	observe that the generators are roots of unity in $\Z_n$
	with $x^{2g} \equiv 1 \pmod n$ for each generator $x$.
Being roots of unity is not a sufficient condition for the Cayley complement to be $r$-primitive,
	but this observation may lead to algebraic techniques to build more infinite families of Cayley complements.  

Determining the maximum density of a clique and independent set for infinite Cayley graphs (i.e., $\cc{\Z}{S}$, where $S$ is finite) would be useful for providing bounds on the finite graphs.
Further, such bounds could be used by algorithms to find and count large cliques and independent sets in finite Cayley graphs.


\section{Orbital branching using custom augmentations}
\label{sec:krcompletions}
In this section, we describe a computational method to search for uniquely $K_r$-saturated graphs.
We shall build graphs piece-by-piece by selecting pairs of vertices to be edges or non-edges.

To store partial graphs, we use the notion of a \emph{trigraph},
	defined by Chudnovsky \cite{trigraphs} and used by Martin and Smith \cite{InducedSaturation}.
A \emph{trigraph} $T$ is a set of $n$ vertices $v_1, \dots, v_n$ where every pair $v_iv_j$ is colored black, white, or gray.
The black pairs represent edges, the white edges represent non-edges, and the gray edges are unassigned pairs.
A graph $G$ is a \emph{realization} of a trigraph $T$ if all black pairs of $T$ are edges of $G$ and all white pairs of $T$ are non-edges of $G$. 
Essentially, a realization is formed by assigning the gray pairs to be edges or non-edges.
In this way, we consider a graph to be a trigraph with no gray pairs.

Non-edges play a crucial role in the structure of uniquely $K_r$-saturated graphs.
Given a trigraph $T$ and a pair $v_iv_j$, a set $S$ of $r-2$ vertices is a \emph{$K_r$-completion} for $v_iv_j$ if every pair in $S \cup \{v_i, v_j\}$ is a black edge, except for possibly $v_iv_j$.
Observe that a $K_r$-free graph is uniquely $K_r$-saturated if and only if every non-edge has a unique $K_r$-completion.

We begin with a completely gray trigraph and build uniquely $K_r$-saturated graphs by adding black and white pairs.
If we can detect that no realization of the current trigraph can be uniquely $K_r$-saturated, then we backtrack and attempt a different augmentation.
The first two constraints we place on a trigraph $T$ are:

\begin{cem}
	\item[(C1)] There is no black $r$-clique in $T$.
	
	\item[(C2)] Every vertex pair has at most one black $K_r$-completion.
\end{cem}

It is clear that a trigraph failing either of these conditions will fail to have a uniquely $K_r$-saturated realization.

We use the symmetry of trigraphs to reduce the number of isomorphic duplicates.
The \textit{automorphism group} of a trigraph $T$ is the set of permutations of the vertices that preserve the colors of the pairs.
These automorphisms are computed with McKay's \textit{nauty} library \cite{HRnauty, nauty} through the standard method of using a layered graph.
	
\def\cO{\mathcal{O}}
\def\cA{\mathcal{A}}

\subsection{Orbital Branching}

Ostrowski, Linderoth, Rossi, and Smriglio introduced the 
	technique of \emph{orbital branching} for symmetric integer programs with 0-1 variables \cite{OrbitalBranching} 
	and for symmetric constraint systems \cite{ConstraintOrbitalBranching}.
Orbital branching extends the standard branch-and-bound strategy of combinatorial optimization by exploiting symmetry to reduce the search space.
We adapt this technique to search for graphs by using trigraphs in place of variable assignments.

Given a trigraph $T$, compute the automorphism group and select an \emph{orbit} $\cO$ of gray pairs.
Since every representative pair in $\cO$ is identical in the current trigraph, assigning any representative to be a white pair leads to isomorphic trigraphs.
Hence, we need only attempt assigning a single pair in $\cO$ to be white.
The natural complement of this operation is to assign all pairs in $\cO$ to be black.
Therefore, we branch on the following two options:
	
\begin{cit}
	\item \emph{Branch 1}: Select any pair in $\cO$ and assign it the color white.
	\item \emph{Branch 2}: Assign all pairs in $\cO$ the color black.
\end{cit}

A visual representation of this branching process is presented in 
	Figure \ref{subfig:orbbranching}.
	
An important part of this strategy is to select an appropriate orbit.
The selection should attempt to maximize the size of the orbit 
	(in order to exploit the number of pairs assigned in the second branch)
	while preserving as much symmetry as possible
	(in order to maintain large orbits in deeper stages of the search).
It is difficult to determine the appropriate branching rule \emph{a priori},
	so it is beneficial to implement and compare the performance of several branching rules.
	
This use of orbital branching suffices to create a complete search of all uniquely $K_r$-saturated graphs, but is not very efficient.
One significant drawback to this technique is the fact that the constraints (C1) and (C2) rely on black pairs forming cliques.
In the next section, we create a custom augmentation step that is aimed at making these constraints trigger more frequently and thereby reducing the number of generated trigraphs.

\subsection{Custom augmentations}

We search for uniquely $K_r$-saturated graphs
	by enforcing at each step that every white pair
	has a unique $K_r$-completion.
We place the following constraints on a trigraph: 

\begin{cem}
	\item[(C3)] If $v_iv_j$ is a white edge, then there exists a unique $K_r$-completion $S \subseteq \{v_1,\dots,v_n\}$ for $v_iv_j$.
\end{cem}

To enforce the constraint (C3), whenever we assign a white pair we shall also select a set of $r-2$ vertices to be the $K_r$-completion and assign the appropriate pairs to be black.
The orbital branching procedure was built to assign only one white pair in a given step, so we can attempt all possible $K_r$-completions for that pair.
However, if we perform an automorphism calculation and only augment for one representative set from every orbit of these sets, we can reduce the number of isomorphic duplicates.

We follow a two-stage orbital branching procedure.
In the first stage, we select an orbit $\cO$ of gray pairs.
Either we select a representative pair $v_{i'}v_{j'} \in \cO$ to set to white or assign $v_iv_j$ to be black for all pairs $v_iv_j \in \cO$.
In order to guarantee constraint (C3), the white pair must have a $K_r$-completion.
We perform a second automorphism computation to find $\Stab_{\{v_{i'}, v_{j'}\}}(T)$, the set of automorphisms which set-wise stabilize the pair $v_{i'}v_{j'}$.
Then, we compute all orbits of $(r-2)$-subsets $S$ in $\{v_1,\dots,v_n\} \setminus \{v_i,v_j\}$ under the action of $\Stab_{\{v_{i'}, v_{j'}\}}(T)$.
The second stage branches on each set-orbit $\cA$, selects a single representative $S' \in \cA$
	and adds all necessary black pairs to make $S'$ be a $K_r$-completion for $v_{i'}v_{j'}$.
If at any point we attempt to assign a white pair to be black, that branch fails and we continue with the next set-orbit.	

This branching process on a trigraph $T$ is:
	
\begin{cit}
	\item \emph{Branch 1}: Select any pair $v_{i_1}v_{j_1} \in \cO$ to be white.
	\begin{cit}
		\item \emph{Sub-Branch}: For \emph{every} orbit $\cA$ of $(r-2)$-subsets of $V(T)\setminus \{v_{i_1},v_{i_2}\}$ under the action of $\Stab_{\{v_{i_1},v_{j_1}\}}(T)$, select any set $S \in \cA$, assign $v_{i_1}v_a$, $v_{j_1}v_a$, and $v_{a}v_{b}$ to be black for all $v_a, v_b \in S$.
	\end{cit}
	\item \emph{Branch 2}: Set $v_iv_j$ to be black for all pairs $v_iv_j \in \cO$.
\end{cit}
	
\begin{figure}[htbp]
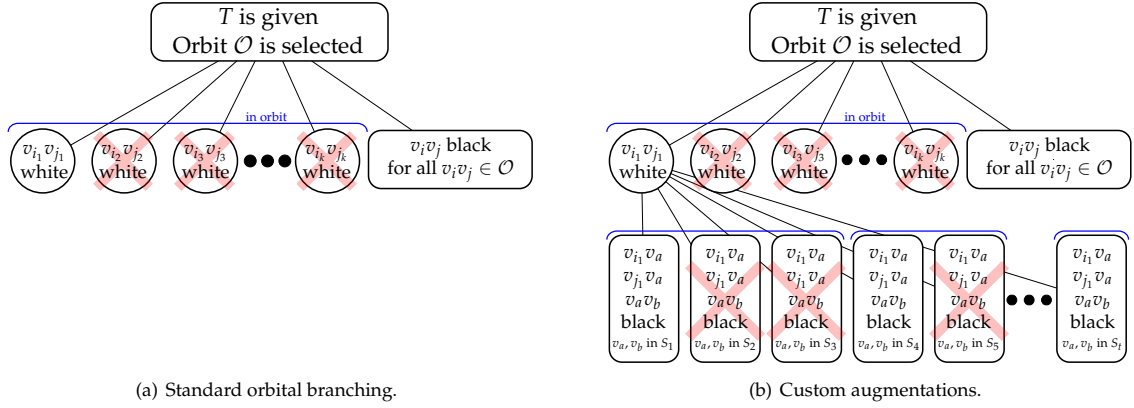

	\centering
	\mbox{
		\scalebox{0.8}{
		\subfigure[\label{subfig:orbbranching}Standard orbital branching.]{
			\begin{lpic}[]{"figs-unique/OrbBranchingLPic"(3.5in,)}
				\lbl[]{140,175;\parbox{2in}{\centering $T$ is given\\Orbit $\cO$ is selected}}
				\lbl[b]{139,128;\tiny\color{blue}{in orbit}}
				\lbl[]{235,108;\parbox{1in}{\footnotesize\centering $v_iv_j$ black\\ for all $v_iv_j \in \cO$}}
				\lbl[]{23.5,107;\footnotesize\parbox{0.4in}{\centering$v_{i_1}v_{j_1}$\\white}}
				\lbl[]{64.5,107;\footnotesize\parbox{0.4in}{\centering$v_{i_2}v_{j_2}$\\white}}
				\lbl[]{107.5,107;\footnotesize\parbox{0.4in}{\centering$v_{i_3}v_{j_3}$\\white}}
				\lbl[]{170.5,107;\footnotesize\parbox{0.4in}{\centering$v_{i_k}v_{j_k}$\\white}}
			\end{lpic}
		}
		\qquad
		\subfigure[\label{subfig:orbbranchingcustom}Custom augmentations.]{
			\begin{lpic}[]{"figs-unique/OrbBranchingCustomLPic"(3.5in,)}
				\lbl[]{140,175;\parbox{2in}{\centering $T$ is given\\Orbit $\cO$ is selected}}
				\lbl[b]{139,128;\tiny\color{blue}{in orbit}}
				\lbl[]{235,108;\parbox{1in}{\footnotesize\centering $v_iv_j$ black\\ for all $v_iv_j \in \cO$}}
				\lbl[]{23.5,107;\footnotesize\parbox{0.4in}{\centering$v_{i_1}v_{j_1}$\\white}}
				\lbl[]{64.5,107;\footnotesize\parbox{0.4in}{\centering$v_{i_2}v_{j_2}$\\white}}
				\lbl[]{107.5,107;\footnotesize\parbox{0.4in}{\centering$v_{i_3}v_{j_3}$\\white}}
				\lbl[]{170.5,107;\footnotesize\parbox{0.4in}{\centering$v_{i_k}v_{j_k}$\\white}}
				\lbl[]{23.75,35;\footnotesize\parbox{0.5in}{\centering$v_{i_1}v_a$\\$v_{j_1}v_a$\\$v_{a}v_b$\\black\\\vspace{0.25em}\tiny$v_a,v_b$ in $S_1$}}
				\lbl[]{66.5,35;\footnotesize\parbox{0.5in}{\centering$v_{i_1}v_a$\\$v_{j_1}v_a$\\$v_{a}v_b$\\black\\\vspace{0.25em}\tiny$v_a,v_b$ in $S_2$}}
				\lbl[]{108.5,35;\footnotesize\parbox{0.5in}{\centering$v_{i_1}v_a$\\$v_{j_1}v_a$\\$v_{a}v_b$\\black\\\vspace{0.25em}\tiny$v_a,v_b$ in $S_3$}}
				\lbl[]{151.5,35;\footnotesize\parbox{0.5in}{\centering$v_{i_1}v_a$\\$v_{j_1}v_a$\\$v_{a}v_b$\\black\\\vspace{0.25em}\tiny$v_a,v_b$ in $S_4$}}
				\lbl[]{193,35;\footnotesize\parbox{0.5in}{\centering$v_{i_1}v_a$\\$v_{j_1}v_a$\\$v_{a}v_b$\\black\\\vspace{0.25em}\tiny$v_a,v_b$ in $S_5$}}
				\lbl[]{257.5,35;\footnotesize\parbox{0.5in}{\centering$v_{i_1}v_a$\\$v_{j_1}v_a$\\$v_{a}v_b$\\black\\\vspace{0.25em}\tiny$v_a,v_b$ in $S_t$}}
			\end{lpic}
		}
		}
	}
	\caption{\label{fig:exampleorbbranching}Visual description of the branching process.}
\end{figure}

\begin{algorithm}[tp]
	\caption{\label{alg:orbbranch} SaturatedSearch($n, r, T$)}
	\begin{algorithmic}
		\IF{$T$ contains a black $r$-clique}
			\STATE \textit{Constraint (C1) fails.}
			\RETURN
		\ELSIF{there exists a pair $v_iv_j$ with two $K_r$-completions in $T$}
			\STATE \textit{Constraint (C2) fails.}
			\RETURN
		\ELSIF{there are no gray pairs}
			\STATE \textit{The trigraph $T$ is uniquely $K_r$-saturated.}
			\STATE Output $T$.
			\RETURN
		\ENDIF
		\STATE \textit{Propagate under constraint (C1).}
		\FOR{all gray pairs $v_iv_j$}
			\IF{$v_iv_j$ has a $K_r$-completion in $T$}
				\STATE \textit{Assign $v_iv_j$ to be white.}
			\ENDIF
		\ENDFOR
		\STATE Compute pair orbits $\cO_1,\cO_2,\dots,$ of gray pairs $\{i,j\}$.
		\STATE Select an orbit $\cO_k$ using the branching rule.
		\STATE \textit{Branch 1.}
		\STATE Let $v_{i'}v_{j'}$ be a representative of $\cO_k$.
		\STATE Compute orbits $\cA_1,\cA_2,\dots, \cA_{\ell}$ of  $(r-2)$-vertex sets in $\{v_1,\dots,v_n\} \setminus \{v_{i'}, v_{j'}\}$.
		\FOR{$t \in \{1,\dots, \ell\}$}
			\STATE Let $S$ be a representative of $\cA_t$.
			\IF{$v_{i'}v_a, v_{j'}v_a, v_{a}v_b$ not white for all $a, b \in S$}
				\STATE \textit{Sub-Branch: Create $T'$ from $T$ by assigning $v_{i'}v_a, v_{j'}v_a, v_{a}v_b$ to be black for all $a, b \in S$.}
				\STATE \textbf{call} SaturatedSearch($n, r, T'$)
			\ENDIF
		\ENDFOR
		\STATE \textit{Branch 2: Create $T''$ from $T$ by assigning $v_iv_j$ to be black for all $v_iv_j \in \cO_k$.}
		\STATE \textbf{call} SaturatedSearch($n, r, T''$)
		\RETURN
	\end{algorithmic}
\end{algorithm}
	
The full algorithm to output all uniquely $K_r$-saturated graphs on 
	$n$ vertices is given as the recursive method 
	SaturatedSearch($n, r, T$) in Algorithm~\ref{alg:orbbranch},
	while the branching procedure is represented in Figure \ref{subfig:orbbranchingcustom}.
The algorithm is initialized using the trigraph corresponding to a single white pair with a $K_r$-completion.
The first step of every recursive call to SaturatedSearch($n, r, T$) is to
	 verify the constraints (C1) and (C2).
If either constraint fails, no realization of the current trigraph can be uniquely $K_r$-saturated, so we return.
After verifying the constraints, we perform a simple propagation step:
If a gray pair $\{i,j\}$ has a $K_r$-completion
	we assign that pair to be white.
We can assume that this pair is a white edge in order to avoid violation of (C1),
	and this assignment satisfies (C3).

The missing component of this algorithm is the \emph{branching rule}: the algorithm that 
	selects the orbit of unassigned pairs to use in the first stage of the branch.
Based on experimentation, the most efficient branching rule we implemented 
	only considers pairs where both vertices are contained in assigned pairs (if they exist)
	or pairs where one vertex is contained in an assigned pair (which must exist, otherwise),
	and selects from these pairs the orbit of largest size.
This choice would guarantee the branching orbit 
	has maximum interaction with currently assigned edges 
	while maximizing the effect of assigning all 
	representatives to be edges in the second branch.

\subsection{Implementation, Timing, and Results}

The full implementation is available as the \emph{Saturation} project in the \emph{SearchLib} software library\footnote{\emph{SearchLib} is available online at \url{http://www.math.unl.edu/~s-dstolee1/SearchLib/}}.
More information for the implementation is given in the \emph{Saturation} User Guide, 
	available with the software.
In particular, the user guide details the methods for verifying the constraints (C1), (C2), and (C3).
When $r \in \{4, 5\}$, we monitored clique growth using a custom data structure,
	but when $r \geq 6$ an implementation using 
	Niskanen and \"Osterg\r{a}rd's \textit{cliquer} library \cite{cliquer}
	was more efficient.

\begin{table}[btp]
	\begin{center}
	\begin{tabular}[tp]{c|r@{.}l|r@{.}l|r@{.}l|r@{.}l|r@{.}l}
		\multicolumn{1}{c|}{$n$} 
			 & \multicolumn{2}{c|}{$r = 4$} 
			 & \multicolumn{2}{c|}{$r = 5$} 
			 & \multicolumn{2}{c|}{$r = 6$} 
			 & \multicolumn{2}{c|}{$r = 7$}
			 & \multicolumn{2}{c}{$r = 8$}\\
		\hline
			10 & 0&10 s 
				& 0&37 s
				& 0&13 s
				& 0&01 s
				& 0&01 s\\
			11 & 0&68 s 
				& 5&25 s
				& 1&91 s
				& 0&28 s
				& 0&09 s\\
			12 & 4&58 s 
				&  1&60 m
				& 25&39 s
				&  1&97 s
				&  1&12 s\\
			13 & 34&66 s 
				& 34&54 m
				&  6&53 m
				& 59&94 s
				& 20&03 s \\
			14 & 4&93 m 
				& 10&39 h
				&  5&13 h
				& 20&66 m
				&  2&71 m\\
			15  & 40&59 m 
				& 23&49 d
				& 10&08 d
				& 12&28 h
				&  1&22 h\\
			16 & 6&34 h 
				& 1&58 y
				& 1&74 y
				& 34&53 d
				& 1&88 d\\
			17 & 3&44 d 
				& \multicolumn{2}{c|}{}
				& \multicolumn{2}{c|}{}
				&  8&76 y
				& 115&69 d\\
			18 & 53&01 d 
				& \multicolumn{2}{c|}{}
				& \multicolumn{2}{c|}{}
				& \multicolumn{2}{c|}{}
				& \multicolumn{2}{c}{}\\
			19 & 2&01 y
				& \multicolumn{2}{c|}{}
				& \multicolumn{2}{c|}{}
				& \multicolumn{2}{c|}{}
				& \multicolumn{2}{c}{}\\
			20 & 45&11 y 
				& \multicolumn{2}{c|}{}
				& \multicolumn{2}{c|}{}
				& \multicolumn{2}{c|}{}
				& \multicolumn{2}{c}{}\\
	\end{tabular}
	\end{center}

	\caption[CPU Times for the uniquely $K_r$-saturated graph search.]{\label{tab:UniqueKrTime}CPU times to search for uniquely $K_r$-saturated graphs of order $n$. Execution times from the Open Science Grid \cite{OpenScienceGrid} using the University of Nebraska Campus Grid \cite{WeitzelMS}.
The nodes available on the University of Nebraska Campus Grid
	consist of Xeon and Opteron processors with 
	a range of speed between 2.0 and 2.8 GHz.}
\end{table}

Our computational method is implemented using the \emph{TreeSearch} library \cite{TreeSearch}, which abstracts the search structure to allow for parallelization to a cluster or grid.
Table \ref{tab:UniqueKrTime} lists the CPU time taken by the search 
	for each $r \in \{4,5,6,7,8\}$ and $10 \leq n \leq N_r$ 
	(where $N_4 = 20$, $N_5 = N_6 = 16$, and $N_7 = N_8 = 17$) 
	until the search became intractable for $n = N_r + 1$.
Table \ref{tab:rprimitivegraphs} lists the $r$-primitive graphs of these sizes. 
Constructions for the graphs are given in Section \ref{sec:constructions}.

\section{Infinite families of $r$-primitive graphs using Cayley graphs}
\label{sec:cayleycomplements}

\def\cH{{\mathcal H}}

In this section, we prove Theorems \ref{thm:twogenexample} and \ref{thm:threegenerators}, which provide our two new infinite families of $r$-primitive graphs.
We begin with some definitions that are common to both proofs.

Fix an integer $n$, a generator set $S \subseteq \Z_n$, and a Cayley complement $G = \cc{\Z_n}{S}$.
For a set $X \subseteq \Z_n$ with $r = |X|$, 
	list the elements of $X$ as 
	$0 \leq x_0 \leq x_1 \leq \dots \leq x_{r-1} < n$.
We shall assume that $X$ is a clique in $G$ (or in $G + e$ for some nonedge 
	$e \in E(\overline{G})$).

Considering $X$ as a subset of ${\mathbb Z}_n$, we let the $k$th \emph{block} $B_k$ be the elements of $\Z_n$
	increasing from $x_k$ (inclusive) to $x_{k+1}$ (exclusive): $B_k  = \{ x_k, x_k + 1, \dots, x_{k+1} - 1\}$.
Note that $|B_k| = x_{k+1}-x_k$; we call a block of size $s$ an \emph{$s$-block}.
\def\cF{\mathcal{F}}
For an integer $t \geq 1$ and $j \in \{0,\dots, r-1\}$, the $j$th \emph{frame} $F_j$ is the collection of $t$ consecutive blocks in increasing order starting from $B_j$: $F_j = \{ B_j, B_{j+1}, \dots, B_{j+\ell-1}\}$.
A \emph{frame family} is a collection $\cF$ of frames.

If $F$ is a frame (or any set of blocks), define $\sigma(F) = \sum_{B_j \in F} |B_j|$, the number of elements covered by the blocks in $F$.

\begin{observation}
	If $X$ is a clique in $\cc{\Z_n}{S}$ and $F$ 
		is a set of consecutive blocks in $X$, then $\sigma(F) \notin S$.
\end{observation}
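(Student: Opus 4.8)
The plan is to unwind the definition of the Cayley complement and combine it with a telescoping identity for $\sigma(F)$. Recall that in $G = \cc{\Z_n}{S}$ two vertices $x,y$ are adjacent precisely when $x-y \not\equiv \pm s \pmod n$ for every $s \in S$. Thus the hypothesis that $X$ is a clique says exactly that no difference of two distinct elements of $X$ is congruent modulo $n$ to an element of $S \cup (-S)$. The entire argument reduces to recognizing that $\sigma(F)$ is, modulo $n$, such a difference.

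First I would write the consecutive blocks as $F = \{B_j, B_{j+1}, \dots, B_{j+m-1}\}$, where $m = |F|$, the block indices are read cyclically, and the vertex following $B_{r-1}$ is $x_0$ (the convention $x_r = x_0 + n$). Then I would telescope the block sizes:
\[
\sigma(F) = \sum_{k=j}^{j+m-1} |B_k| = \sum_{k=j}^{j+m-1}(x_{k+1}-x_k) = x_{j+m} - x_j .
\]
Reducing modulo $n$ shows $\sigma(F) \equiv x_{j+m} - x_j \pmod n$; that is, $\sigma(F)$ is congruent to the difference of the two elements $x_j, x_{j+m} \in X$ that bound the arc covered by $F$.

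Next I would split into the two cases this identity allows. If $F$ is the full collection of blocks, then $x_{j+m} \equiv x_j \pmod n$, so $\sigma(F) = n \equiv 0 \pmod n$, and $0 \notin S$ since $S$ is a generating set. Otherwise $F$ is a proper nonempty arc, so $x_j$ and $x_{j+m}$ are distinct vertices of the clique $X$ and $1 \le \sigma(F) \le n-1$. Because $X$ is a clique in $G$, these two vertices are adjacent, hence $x_{j+m} - x_j \not\equiv s \pmod n$ for every $s \in S$; combined with $\sigma(F) \equiv x_{j+m} - x_j$ and the fact that $\sigma(F)$, lying in $\{1,\dots,n-1\}$, equals its own residue, this yields $\sigma(F) \notin S$, as desired.

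Once the definitions are unwound the argument is essentially a one-line telescoping, so I do not expect a serious obstacle. The only points requiring care are the cyclic bookkeeping of the block and vertex indices (confirming that the arc bounded by $F$ really has endpoints $x_j, x_{j+m} \in X$, including the wraparound at $B_{r-1}$) and the distinction between the integer $\sigma(F)$ and its residue class, so that membership in $S \subseteq \Z_n$ is tested against the correct value. I would also note in passing that the same telescoping handles the ``$X$ a clique in $G+e$'' variant from the setup, where the one additional permitted difference corresponds to the non-edge $e$.
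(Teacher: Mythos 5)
Your proof is correct and is exactly the argument the paper leaves implicit for this Observation: telescoping the block sizes to get $\sigma(F) \equiv x_{j+m}-x_j \pmod n$, then invoking the definition of adjacency in the Cayley complement to conclude this difference of clique elements avoids $S$. The cyclic bookkeeping and the integer-versus-residue point are handled properly, so nothing is missing.
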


\subsection{Two Generators}

\noindent\textbf{Theorem \ref{thm:twogenexample}.}\ 
\emph{Let $t \geq 1$, and set $n = 4t^2+1$, $r = 2t^2-t+1$.
	Then, $\cc{\Z_n}{\{1,2t\}}$ is $r$-primitive.
}

\begin{proof}
	Let $G = \cc{\Z_n}{\{1,2t\}}$.
	Note that $G$ is regular of degree $n - 5$.
	If $t = 1$, then $n = 5$, $G$ is an empty graph, and $r = 2$, and empty graphs are 2-primitive.
	Therefore, we consider $t \geq 2$.
	
	\begin{claim}\label{clm:2t1}
		For a clique $X$, every frame $F_j$ has at least one block of size at least three, and $\sigma(F_j) \geq 2t+1$.
	\end{claim}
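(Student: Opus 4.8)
The plan is to derive both assertions from the Observation, combined with the elementary fact that two consecutive clique elements cannot be at distance $1$ in $\Z_n$. First I would record the basic bound $|B_k| \geq 2$ for every block. Indeed $|B_k| = x_{k+1}-x_k = \sigma(\{B_k\})$, and applying the Observation to the single-block set $\{B_k\}$ gives $|B_k| \notin S = \{1,2t\}$; in particular $|B_k| \neq 1$, and since block sizes are positive integers this forces $|B_k| \geq 2$. (The same computation also shows no block has size exactly $2t$, though that is not needed here.)

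Summing this bound over the $t$ blocks constituting a frame $F_j$ immediately yields $\sigma(F_j) = \sum_{i=0}^{t-1} |B_{j+i}| \geq 2t$. To sharpen this to $\sigma(F_j) \geq 2t+1$, I would invoke the Observation a second time, now applied to the entire frame: since $F_j$ is a set of $t$ consecutive blocks in the clique $X$, we have $\sigma(F_j) \notin S$, so $\sigma(F_j) \neq 2t$. Combined with $\sigma(F_j) \geq 2t$ this gives $\sigma(F_j) \geq 2t+1$. The one point requiring care is the cyclic bookkeeping: to apply the Observation I must know that $\sigma(F_j)$ is the genuine forward distance $x_{j+t}-x_j$ taken strictly inside one revolution of $\Z_n$, i.e.\ that $\sigma(F_j) < n$. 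This holds because a frame uses only $t$ of the $r$ blocks, and the remaining $r-t \geq 1$ blocks each contribute at least $2$, so $\sigma(F_j) \leq n - 2(r-t) < n$ (using $t < r$ for $t \geq 2$).

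Finally, for the size-three statement I would simply read the inequality off at the block level: the $t$ blocks of $F_j$ each have size $\geq 2$ yet sum to $\sigma(F_j) \geq 2t+1 > 2t$, so they cannot all equal $2$; hence at least one of them has size $\geq 3$. I expect no serious obstacle here, as the whole argument is a two-line consequence of the Observation; the only genuinely delicate piece is verifying the cyclic/wrap-around accounting above, together with the implicit assumption that the frames are proper (so that $\sigma(F_j)$ is a true difference of two clique elements and the Observation is applicable). If the claim is also to be read for cliques in $G+e$ rather than only in $G$, I would additionally note that at most one pair of clique elements can fail the forbidden-difference condition, and check that this single exceptional pair does not undermine the frames used in the counting.
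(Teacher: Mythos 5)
Your proof is correct and is essentially the paper's argument: both rest on the two facts that every block has size at least $2$ (since $1 \in S$) and that $\sigma(F_j) \neq 2t$ (since $2t \in S$, via the Observation), differing only in the trivial order of deduction (the paper derives the existence of a $3$-block from the contradiction $\sigma(F_j) = 2t \in S$ and then concludes $\sigma(F_j) \geq 2t+1$, whereas you conclude $\sigma(F_j) \geq 2t+1$ first and then read off the $3$-block). Your extra remarks on the wrap-around accounting and on cliques in $G+e$ are sound points of care that the paper leaves implicit --- indeed, when the claim is later invoked for cliques in $G + \{0,1\}$, the paper only uses frames avoiding the size-one block $B_0$, exactly as your caveat requires.
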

	
	All blocks $B_j$ have at least two elements, since no pair of elements in $X$ may be consecutive in $\Z_n$, so $\sigma(F_j) \geq 2t$.
	If for all $B_k \in F_j$ the block length $|B_k|$ is exactly two, then
		$\sigma(F_j) = 2t \in S$.
	Hence, there is some $B_k \in F_j$ so that $|B_k| \geq 3$ 
		and $\sigma(F_j) \geq 2t + 1$.
		
	We now prove there is no $r$-clique in $G$.
	
	\begin{claim}\label{claim:twogenclique}
		$\omega(G) < r$.
	\end{claim}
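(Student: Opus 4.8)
The plan is to argue by contradiction: suppose $G$ contains a clique $X$ with $|X| = r = 2t^2 - t + 1$, and derive a numerical impossibility by averaging the frame inequality of Claim~\ref{clm:2t1} over all cyclic positions. Since $X$ is a clique of size $r$, it determines exactly $r$ blocks $B_0, \dots, B_{r-1}$ read cyclically around $\Z_n$, and these blocks partition $\Z_n$, so $\sum_{k=0}^{r-1} |B_k| = n$.

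First I would set up the cyclic double count. For each $j \in \{0,\dots,r-1\}$ the frame $F_j = \{B_j, \dots, B_{j+t-1}\}$ (indices mod $r$) is a window of $t$ consecutive blocks, and since $t < r$ whenever $t \geq 2$, each such window is a genuine collection of $t$ distinct blocks. Every block $B_k$ lies in exactly $t$ of these $r$ frames (namely $F_{k-t+1}, \dots, F_k$), so summing $\sigma$ over all frames counts each block exactly $t$ times:
\[
	\sum_{j=0}^{r-1} \sigma(F_j) \;=\; t \sum_{k=0}^{r-1} |B_k| \;=\; t\,n.
\]

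Next I would apply the lower bound. By Claim~\ref{clm:2t1} every frame satisfies $\sigma(F_j) \geq 2t+1$, and this holds for the wrap-around frames as well, since the only facts used are that each block has size at least $2$ (no two clique elements are consecutive in $\Z_n$) and that $\sigma(F_j) \neq 2t$ by the Observation applied to consecutive blocks. Hence $\sum_{j=0}^{r-1} \sigma(F_j) \geq r(2t+1)$, and combining with the double count gives $t\,n \geq r(2t+1)$, that is,
\[
	t(4t^2+1) \;\geq\; (2t^2 - t + 1)(2t+1).
\]
Expanding, the left side is $4t^3 + t$ while the right side is $4t^3 + t + 1$, so the inequality reads $0 \geq 1$, a contradiction. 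Therefore no clique of size $r$ exists and $\omega(G) < r$.

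The main obstacle, and really the only delicate point, is justifying the cyclic bookkeeping: confirming that with exactly $r$ elements the blocks wrap cleanly around $\Z_n$, that each frame is a window of $t$ distinct blocks (using $t < r$), and that each block is covered by precisely $t$ frames so the double count equals $t\,n$ rather than being off by boundary terms. Once that is in place the contradiction is immediate and, notably, tight: the estimate fails by exactly $1$, which is why the value $r = 2t^2 - t + 1$ is the precise clique threshold for this family.
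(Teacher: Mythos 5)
Your proof is correct and follows essentially the same route as the paper: both arguments double-count $\sum_{j=0}^{r-1}\sigma(F_j)$ over all $r$ frames (each block counted exactly $t$ times, giving $tn$), apply the bound $\sigma(F_j)\geq 2t+1$ from Claim~\ref{clm:2t1}, and reach the contradiction $4t^3+t \geq 4t^3+t+1$. Your added care about the cyclic wrap-around bookkeeping is a fair point of rigor but does not change the substance of the argument.
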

	
	Suppose $X \subseteq {\mathbb Z}_n$ is a clique of order $r$ in $G$.
	Let $\cF$ be the frame family of all frames ($\cF = \{ F_j : j \in \{0,\dots,r-1\} \}$)
		and consider the sum $\sum_{j=0}^{r-1} \sigma(F_j)$.
	Using the bound $\sigma(F_j) \geq 2t + 1$, we have this sum is at least $(2t+1)r$.
	Each block length $|B_k|$ is counted in $t$ evaluations of $\sigma(F_j)$ (for $j \in \{ k - t + 1, k - t + 2,\dots, k\}$).
	This sum counts each element of $\Z_n$ exactly $t$ times, giving value $tn$.
	This gives $tn = \sum_{j=0}^{r-1} \sigma(F_j) \geq (2t+1)r$, but $tn = 4t^3 + t < 4t^3 + t + 1 = (2t+1)r$, a contradiction.
	Hence, $X$ does not exist, proving the claim.
	
	To prove unique saturation, we consider only the non-edge $\{0,1\}$ since
		$G$ is vertex-transitive and the map $x \mapsto -2tx$ is an automorphism of $G$
		mapping the edge $\{0, 2t\}$ to $\{0, -4t^2\} \equiv \{ 0, 1\} \pmod n$.

	\begin{claim}\label{claim:twogenunique}
		There is a unique $r$-clique in $G + \{0,1\}$.
	\end{claim}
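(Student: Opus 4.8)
The plan is to prove the claim in two parts: exhibit one $r$-clique in $G + \{0,1\}$ and then show it is the only one. For existence I would take the arithmetic progression $X = \{(2t-1)k \bmod n : 0 \le k \le r-1\}$. Since $n = (2t-1)(2t+1) + 2$ and $2t-1$ is odd, $\gcd(2t-1,n) = 1$, so these $r$ residues are distinct. Every difference of two elements of $X$ has the form $(2t-1)m$ with $1 \le |m| \le r-1$, and from $(2t-1)(r-1) \equiv 1 \pmod n$ one checks that $(2t-1)m \equiv 1$ forces $m = r-1$, i.e.\ the pair $\{0,1\}$, while $(2t-1)m \equiv 2t$ would force $m \equiv r \pmod n$, which is impossible in this range. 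Hence every pair of $X$ is an edge of $G$ except $\{0,1\}$, so $X$ is an $r$-clique of $G + \{0,1\}$.

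For uniqueness, let $Y$ be any $r$-clique of $G + \{0,1\}$. By Claim \ref{claim:twogenclique} the graph $G$ has no $r$-clique, so $Y$ must use the new edge and therefore contain both $0$ and $1$. Relabelling cyclically so that $x_0 = 0$ and $x_1 = 1$, the block $B_0 = \{0\}$ has size $1$, and it is the only such block since no other pair of clique vertices may be consecutive in $\Z_n$. I would then rerun the counting argument of Claim \ref{claim:twogenclique} with this single size-$1$ block accounted for: the $t$ frames that contain $B_0$ now only satisfy $\sigma(F_j) \ge 2t-1$ with $\sigma(F_j) \ne 2t$ (by the Observation), while the other $r-t$ frames still satisfy $\sigma(F_j) \ge 2t+1$ as in Claim \ref{clm:2t1}. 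Since $\sum_j \sigma(F_j) = tn = 4t^3 + t$, the total excess over these lower bounds is exactly $2t-1$, so the frame sums are forced to be nearly minimal.

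The heart of the proof is turning this tight budget into an exact reconstruction of $Y$. The total block excess $\sum_{k \ne 0}(|B_k| - 2) = n - 2r + 1 = 2t$ is small, and the Observation—that no run of consecutive blocks sums to $2t$—must be combined with the frame budget to force every block to have size in $\{2,3\}$, with exactly $2t$ blocks of size $3$. Their positions are then constrained by two rules: every window of $t$ consecutive blocks contains a size-$3$ block (Claim \ref{clm:2t1}), and each frame containing $B_0$ must avoid $\sigma(F_j) = 2t$, i.e.\ cannot contain exactly one size-$3$ block among its non-$B_0$ members. I expect this enumeration to be the main obstacle: the admissible placements of the size-$3$ blocks, especially in the runs straddling $B_0$, are governed by a delicate spacing condition, and one must show it has a single solution—the alternating pattern that reconstructs $X$. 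Once the block sequence is determined relative to the anchored block $B_0 = \{0\}$, we obtain $Y = X$, which proves uniqueness.
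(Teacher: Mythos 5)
Your existence half is correct and self-contained: the arithmetic progression $X=\{(2t-1)k \bmod n : 0\le k\le r-1\}$ is indeed an $r$-clique of $G+\{0,1\}$, since $r-1=t(2t-1)$ gives $(2t-1)(r-1)\equiv 1\pmod n$, and the congruences $(2t-1)m\equiv \pm 1$ and $(2t-1)m\equiv\pm 2t$ have no solutions with $1\le|m|\le r-1$ other than $m=\pm(r-1)$. This is a nice explicit complement to the paper, which establishes existence only implicitly in its closing sentence. Your uniqueness setup is also right: by Claim \ref{claim:twogenclique} any $r$-clique $Y$ of $G+\{0,1\}$ must contain both $0$ and $1$, so it has a single $1$-block $B_0=\{0\}$ and all other blocks of size at least two.

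However, the core of the uniqueness proof is missing, and you say so yourself (``I expect this enumeration to be the main obstacle \dots one must show it has a single solution''). Everything that makes the claim true --- that $Y$ has no block of size at least four, that it has exactly $2t$ blocks of size three, and that the spacing constraints admit exactly one placement of those blocks --- is deferred rather than proved. The difficulty is compounded by your choice of frame family: using all $r$ frames, each block is counted $t$ times and the slack over your lower bounds is $2t-1$, smeared across overlapping frames, so no individual frame sum is pinned down. The paper's proof avoids this with one decisive idea your proposal lacks: it uses the subfamily $\{F_{jt+1} : j\in\{0,\dots,2t-2\}\}$ of $2t-1$ pairwise \emph{disjoint} frames whose blocks tile $\Z_n\setminus B_0$. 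For that family the sum of the $\sigma$-values is exactly $n-1$, while Claim \ref{clm:2t1} gives the lower bound $(2t+1)(2t-1)=n-2$, so the total slack is $1$: every frame sum equals $2t+1$ except a unique one equal to $2t+2$. From this rigidity the block structure falls out quickly --- a $4$-block in the special frame is excluded because deleting an end $2$-block would leave consecutive blocks summing to $2t\in S$; hence the special frame has two $3$-blocks at its ends, and the spacing argument forces $k_1=t$ and $k_{j+1}-k_j=t-1$, determining $Y$ uniquely. Without this disjoint-frame tiling (or an honest completion of your overlapping-frame enumeration, which would be substantially messier), your argument does not yet prove the claim.
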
	
	
	We may assume $X = \{ 0, 1, x_2, \dots, x_{r-1}\}$ 
		is an $r$-clique in $G + \{0,1\}$.
	We use the frame family $\cF$ defined as
	\[ \cF = \left\{ F_{jt+1} : j \in \{0,\dots, 2t-2 \} \right\}. \]
	
	Note that $\cF$ contains $2t-1$ \emph{disjoint} frames containing disjoint blocks, 		
		and the block $B_0 = \{x_0\}$ is not contained in any frame within $\cF$.
	Hence, $n - 1 = \sum_{F \in \cF} \sigma(F)$.
	By Claim \ref{clm:2t1}, we know that every frame $F \in \cF$ has $\sigma(F) \geq 2t+1$.
	This lower bound gives $\sum_{F\in \cF} \sigma(F) \geq (2t+1)(2t-1) = n - 2$.
	Thus, considering $\sigma(F)$ as an integer variable for each $F \in \cF$, 
		all solutions to the integer program with constraints
		$\sigma(F) \geq 2t + 1$ and $\sum_{F\in \cF} \sigma(F) = n - 1$
		have $\sigma(F) = 2t+1$ for all $F \in \cF$ except a unique $F' \in \cF$ with $\sigma(F') = 2t+2$.
				
	The frame $F'$ has two possible ways to attain $\sigma(F') = 2t + 2$:
		(a) have two blocks of size three, or (b) have one block of size four.
	However, if $F'$ has a block of size four, 
		then there is a 2-block $B_j \in F'$ on one end of $F'$
		where $\sigma(F' \setminus \{B_j\}) = 2t \in S$, a contradiction.
	Thus, $F'$ has two blocks of size three. 
	In addition, if $F'$ has fewer than $t-2$ blocks of size two 
		between the two blocks of size three, then there is a pair $x, y \in X$ with $y = x + 2t$.
	Therefore, $F'$ has two blocks of size three and they are the first and last blocks of $F'$.
	
	This frame family demonstrates the following properties of $X$.
	First, there are exactly $2t$ blocks of size three ($2t-2$ frames have exactly one and $F'$ has exactly two).
	Second, there is no set of $t$ consecutive blocks of size two.
	Finally, no two blocks of size three have fewer than $t-2$ blocks of size two between them.
	
	Consider the position of a 3-block in the first frame, $F_1$.
	If there are two 3-blocks in $F_1$, 
		they appear as the first and last blocks in $F_1$, but then 
		the distance from $x_0$ to $x_{t-1}$ is $2t$, a contradiction.
	Since there is exactly one 3-block, $B_k$, in $F_1$, suppose $k < t$.
	Then the distance from $x_0$ to $x_{t-1}$ is $2t$.
	Hence, $B_t$ is the 3-block in $F_1$.
	By symmetry, there must be $t-1$ $2$-blocks between the $3$-block in $F_{(2t-2)t+1}$ and $x_0$.
	
	Let $B_{k_1}, B_{k_2}, \dots, B_{k_{2t}}$ be the $3$-blocks in $X$ with $k_1 < k_2 < \cdots < k_{2t}$.
	By the position of the 3-block in $F_1$, we have $k_1 = t$.
	By the position of the 3-block in $F_{(2t-2)t+1}$, we have $k_{2t} = (2t-2)t+1$.
	Since 3-blocks must be separated by at least $t-1$ 2-blocks, 
		$k_{j+1} - k_{j} \geq t - 1$ but since $k_{2t} = (2t-1)(t-1) + k_1$
		we must have equality: $k_{j+1} - k_j = t - 1$.
	Assuming $X$ is an $r$-clique, it is uniquely defined by these properties.
	Indeed all vertices of this set are adjacent.
\end{proof}

\subsection{Three Generators}

\noindent\textbf{Theorem \ref{thm:threegenerators}.}\ 
\emph{
	Let $t \geq 1$ and set $n = 9t^2-3t+1, r = 3t^2-2t+1$.
	Then, $\cc{\Z_n}{\{1,3t-1, 3t\}}$ is $r$-primitive.
}

\begin{proof}
	Let $G = \cc{\Z_n}{\{1,3t-1, 3t\}}$.
	Observe that $G$ is vertex-transitive and there are automorphisms mapping 
		$\{0, 3t - 1\}$ to $\{0, 1\}$ 
		or $\{0, 3t\}$ to $\{0,1\}$.
	 Thus, we only need to verify that $G$ has no $r$-clique and 
	 	$G + \{0,1\}$ has a unique $r$-clique.
	
	We prove that $G$ is $r$-primitive in three steps.
	First, we show that there is no $r$-clique in $G$ in Claim \ref{claim:threegenclique} using discharging.
	Second, assuming there are no 2-blocks in an $r$-clique of $G + \{0,1\}$,
		we prove in Claim \ref{claim:threegenunique}
		that there is a unique such clique.
	This proof uses a counting method
		similar to the proof of Claim \ref{claim:twogenunique}.	
	Finally, we show that any $r$-clique in $G + \{0,1\}$ cannot contain any 2-blocks.
	This step is broken into Claims \ref{claim:complicateddischarging} and \ref{claim:itIScomplicated}, both of which slightly modify the discharging method 
		from Claim \ref{claim:threegenclique}
		to handle the 1-block.
	Claim \ref{claim:itIScomplicated} requires a detailed case analysis.
	
	We use several figures to aid the proof.
	Figure \ref{fig:blockframekey} shows examples of common features from these figures.
	
\begin{casefig}
			\scalebox{\casefigratio}{\begin{lpic}[]{"figs-unique/FigureKey"(,20mm)}
			   \lbl[b]{23,18;\small Frame}
			   \lbl[b]{41,18;\small Element}
			   \lbl[b]{56,18;\small Block}
			   \lbl[br]{90,16.5; \footnotesize Possible Element}
			   \lbl[bl]{95,16.5;\footnotesize Forbidden Element}
			   \lbl[tl]{112,5;\small $\Z_n$}
			   \lbl[tr]{138,8;\footnotesize Increasing}
			   \lbl[tl]{2,8;\footnotesize Decreasing}
			   \lbl[r]{48.5,2;\footnotesize $\{3t-1,3t\}$}
			\end{lpic}}
	\caption{\label{fig:blockframekey}Key to later figures}
\end{casefig}
	
	We begin by showing some basic observations which are used frequently in the rest of the proof.
	These observations focus on interactions among blocks that are forced by the generators $3t-1$ and $3t$.
	In the observations below, we define functions $\varphi_s$ and $\psi_s$ which map $s$-blocks of $X$ to other blocks of $X$.
	Always, $\varphi_s$ maps blocks \emph{forward} ($\varphi_s(B_k)$ has higher index than $B_k$) while $\psi_s$ maps blocks \emph{backward} ($\psi_s(B_k)$ has lower index than $B_k$).

It is intuitive that a maximum size clique uses as many small blocks as possible, to increase the density of the clique within $G$.
However, Observation \ref{obs:twoblocks} shows that every 2-block
	induces a block of size at least five \emph{in both directions}.
	
\begin{casefig}
			\scalebox{\casefigratio}{\begin{lpic}[]{"figs-unique/Observation2Blocks"(,22.5mm)}
			   \lbl[t]{88,20;\small$\varphi_2$}
			   \lbl[t]{48,20;\small$\psi_2$}
				\lbl[]{68.25,10.25;\footnotesize$x_{j}$}
				\lbl[]{77.75,10.25;\tiny$x_{j+1}$}
				\lbl[b]{70,15;$B_{j}$}
				\lbl[b]{15,15;$\psi_2(B_j)$}
				\lbl[b]{123,15;$\varphi_2(B_j)$}
			\end{lpic}}
	\caption{Observation \ref{obs:twoblocks} and a 2-block $B_j$.}
\end{casefig}
	
\begin{observation}[2-blocks]\label{obs:twoblocks}
	Let $B_j$ be a 2-block, so $x_{j+1} = x_j + 2$.
	The elements $x_j$ and $x_{j+1}$ along with generators $3t-1$ and $3t$
		guarantee that the sets $\{ x_j + 3t - 1, x_j + 3t, x_j + 3t + 1, x_j + 3t + 2\}$ and $\{x_j - 3t, x_j - 3t + 1, x_j - 3t + 2, x_j - 3t + 3\}$ do not intersect $X$.
	Since these sets contain consecutive elements, 
		each set is contained within a single block of $X$.
	We will use $\varphi_2(B_j)$ to denote the block containing $x_j + 3t$
		and $\psi_2(B_j)$ to denote the block containing $x_j - 3t$.
	Both $\varphi_2(B_j)$ and $\psi_2(B_j)$ have size at least five.
\end{observation}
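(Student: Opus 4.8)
The plan is to read off the adjacency rule for $G$ directly and then carry out a short local computation. By definition of the Cayley complement, two elements $x,y \in \Z_n$ are adjacent in $G = \cc{\Z_n}{\{1,3t-1,3t\}}$ exactly when $x - y \not\equiv 0,\ \pm 1,\ \pm(3t-1),\ \pm 3t \pmod n$. Consequently, if $X$ is a clique then no two of its elements differ by $1$, $3t-1$, or $3t$ modulo $n$; in particular every block has size at least two, which is why a $2$-block is the smallest possible. I would start from this reformulation and treat the two endpoints $x_j$ and $x_{j+1} = x_j + 2$ of the $2$-block as two separate sources of forbidden positions.

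First I would establish the forward set. From $x_j$ the generators $3t-1$ and $3t$ forbid $x_j + (3t-1)$ and $x_j + 3t$ from belonging to $X$, and from $x_{j+1} = x_j + 2$ they forbid $x_{j+1} + (3t-1) = x_j + 3t + 1$ and $x_{j+1} + 3t = x_j + 3t + 2$. The crucial point---and essentially the only thing to check---is that these four positions $x_j + 3t - 1,\ x_j + 3t,\ x_j + 3t + 1,\ x_j + 3t + 2$ form an \emph{unbroken} run of four consecutive elements of $\Z_n$: the pair coming from $x_j$ occupies $\{3t-1,3t\}$ relative to $x_j$, the pair coming from $x_{j+1}$ occupies $\{3t+1,3t+2\}$, and the shift of $2$ exactly closes the gap precisely because the two generators are consecutive. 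The backward set is symmetric: $x_j$ forbids $x_j - 3t$ and $x_j - (3t-1)$, while $x_{j+1}$ forbids $x_j - 3t + 2$ and $x_j - 3t + 3$, together giving the consecutive run $x_j - 3t,\dots,x_j - 3t + 3$.

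Finally I would convert the run of four consecutive non-clique elements into the size bound. The elements of $\Z_n$ not lying in $X$ are precisely the tails $\{x_k+1,\dots,x_{k+1}-1\}$ of the blocks, and no such tail can be crossed by a clique element; hence any four consecutive elements that all miss $X$ must sit inside the tail of a single block $B_k$, forcing $|B_k| = x_{k+1} - x_k \geq 5$. Applying this to the forward run defines $\varphi_2(B_j)$ as the unique block containing $x_j + 3t$, and applying it to the backward run defines $\psi_2(B_j)$ as the block containing $x_j - 3t$, each of size at least five. I do not anticipate a real obstacle here: the content is entirely the local arithmetic of the previous paragraph, and the only thing requiring care is confirming that the two forbidden pairs interleave into a contiguous block of four---a fact that hinges on the block having size exactly two and on $3t-1,3t$ being adjacent generators---together with the harmless observation that all congruences are taken modulo $n$ and that $3t$ is far smaller than $n = 9t^2 - 3t + 1$, so no wrap-around collision occurs.
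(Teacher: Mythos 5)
Your proposal is correct and follows exactly the paper's own (inline) justification: each of $x_j$ and $x_{j+1}=x_j+2$ forbids the positions at distance $3t-1$ and $3t$ in each direction, the two forbidden pairs interleave into runs of four consecutive elements of $\Z_n\setminus X$, and any such run lies strictly inside a single block, forcing that block to have size at least five. The only cosmetic caveat is your side remark that every block has size at least two, which holds for cliques in $G$ but not for cliques in $G+\{0,1\}$ (where $B_0=\{0\}$ is a 1-block); this does not affect the argument, since the added edge $\{0,1\}$ has difference $1$, not $3t-1$ or $3t$.
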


If in fact multiple 2-blocks induce the same big block, 
	Observation \ref{obs:bigblocks} implies the big block has even larger size.

\begin{figure}[ht]
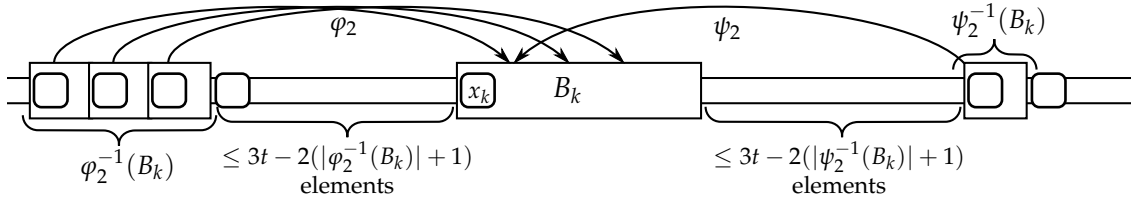

	\centering
			\scalebox{\casefigratio}{\begin{lpic}[]{"figs-unique/Observation5Blocks"(,22.5mm)}
			   \lbl[t]{90,19.5;\small$\psi_2$}
			   \lbl[t]{42,19.5;\small$\varphi_2$}
				\lbl[]{59,9.75;\footnotesize$x_{k}$}
				\lbl[t]{70,12;$B_{k}$}
				\lbl[t]{15,3;\small$\varphi_2^{-1}(B_k)$}
				\lbl[b]{124,17;\small$\psi_2^{-1}(B_k)$}
				\lbl[t]{42.5,4;\footnotesize\parbox{50mm}{\centering$\leq 3t - 2(|\varphi_2^{-1}(B_k)|+1)$\\ elements}}
				\lbl[t]{104,4; \footnotesize\parbox{50mm}{\centering$\leq 3t - 2(|\psi_2^{-1}(B_k)|+1)$\\ elements}}
			\end{lpic}}
	\caption{\label{fig:obsbigblock}Observation \ref{obs:bigblocks} and a block $B_k$.}
\end{figure}

\begin{observation}[Big blocks]\label{obs:bigblocks}
	Let $B_k$ be a block of size at least five.
	The set $\varphi_2^{-1}(B_k)$ is the set of 2-blocks $B_j$ so that $\varphi_2(B_j) = B_k$.
	Similarly, $\psi_2^{-1}(B_k)$ is the set of 2-blocks $B_j$ so that $\psi_2(B_j) = B_k$.
	Note that when $s = |\varphi_2^{-1}(B_k)|$, there are at least $s + 1$ elements of $X$
		($s$ from the 2-blocks in $\varphi_2^{-1}(B_k)$ and one following the last 2-block in $\varphi_2^{-1}(B_{k})$)
		which block $2(s+1)$ elements from containment in $X$ using the generators $3t - 1$ and $3t$.
	Therefore, 
	\[ |B_k| \geq 2|\varphi_2^{-1}(B_k)| + 3,
		\quad\text{and}\quad
		 |B_k| \geq 2|\psi_2^{-1}(B_k)| + 3.\]
	Further, there are at most $3t - 2(|\varphi_2^{-1}(B_k)| + 1)$ elements
		between $B_k$ and the last block of $\varphi_2^{-1}(B_k)$.
	Similarly, there are at most $3t - 2(|\psi_2^{-1}(B_k)|+1)$ elements between
		$B_k$ and the first block of $\psi_2^{-1}(B_k)$.
\end{observation}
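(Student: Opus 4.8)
The plan is to prove both displayed inequalities and both distance bounds by a single counting argument carried out in the forward direction (for $\varphi_2$), and then obtain the backward ($\psi_2$) statements at no extra cost from the automorphism $x \mapsto -x$ of $G$. This map is available because the generator set $\{1,3t-1,3t\}$ is symmetric under negation; it sends a clique to a clique, reverses the cyclic order of the blocks, and interchanges $\varphi_2$ with $\psi_2$, so every forward statement transfers verbatim.

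Fix $B_k$ with $|B_k|\ge 5$ and list $\varphi_2^{-1}(B_k)=\{B_{j_1},\dots,B_{j_s}\}$ with $j_1<\cdots<j_s$, so that $s=|\varphi_2^{-1}(B_k)|$ and each $B_{j_i}$ is a $2$-block with $x_{j_i+1}=x_{j_i}+2$. First I would record exactly what each of these forces into $B_k$: by Observation \ref{obs:twoblocks} the four consecutive integers $x_{j_i}+3t-1,\ x_{j_i}+3t,\ x_{j_i}+3t+1,\ x_{j_i}+3t+2$ all avoid $X$ and lie inside the single block $\varphi_2(B_{j_i})=B_k$. Taking $i=1$ and $i=s$ shows that $x_{j_1}+3t-1$ and $x_{j_s}+3t+2=x_{j_s+1}+3t$ both lie in $B_k$. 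Since $B_k$ is a maximal run of consecutive integers whose only member of $X$ is its left endpoint $x_k$, and all of the listed integers avoid $X$, the entire integer interval $[\,x_{j_1}+3t-1,\ x_{j_s}+3t+2\,]$ is forced into $B_k\setminus\{x_k\}$.

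Next I would count this interval. It contains $(x_{j_s}+3t+2)-(x_{j_1}+3t-1)+1=x_{j_s}-x_{j_1}+4$ integers, and since the $B_{j_i}$ are distinct blocks of size at least two, consecutive left endpoints satisfy $x_{j_{i+1}}-x_{j_i}\ge 2$, whence $x_{j_s}-x_{j_1}\ge 2(s-1)$ and the interval contains at least $2(s+1)$ integers. This is precisely the assertion that the $s+1$ elements $x_{j_1},\dots,x_{j_s},x_{j_s+1}$ block $2(s+1)$ elements of $B_k$; as all of these lie in $B_k\setminus\{x_k\}$, we obtain $|B_k|-1\ge 2(s+1)$, i.e.\ $|B_k|\ge 2|\varphi_2^{-1}(B_k)|+3$. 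For the distance bound, the leftmost blocked integer $x_{j_1}+3t-1$ lies in $B_k$ but is not its left endpoint (it avoids $X$, whereas $x_k\in X$), so $x_k\le x_{j_1}+3t-2\le x_{j_s}+3t-2s$; as $B_{j_s}$ is the $2$-block ending at $x_{j_s}+1$, the number of integers strictly between $B_{j_s}$ and $B_k$ is $x_k-x_{j_s}-2\le 3t-2(s+1)$, as claimed.

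I expect the one genuinely delicate point to be the containment step of the second paragraph: one must guarantee that the extreme blocked integers really sit inside $B_k$, rather than spilling across its right endpoint, even when the $2$-blocks of $\varphi_2^{-1}(B_k)$ are not consecutive and larger blocks separate them. This is resolved uniformly by using that $B_k$ is a maximal interval of non-clique vertices, so any two integers outside $X$ that are known to lie in $B_k$ drag the whole integer interval between them into $B_k$ as well. With that in hand, the spacing inequality $x_{j_{i+1}}-x_{j_i}\ge 2$ turns the count into the stated \emph{lower} bound, and no finer case analysis about the arrangement of the intervening blocks is required.
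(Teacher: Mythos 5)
Your proposal is correct and takes essentially the same approach as the paper: Observation \ref{obs:twoblocks} forces, for each 2-block in $\varphi_2^{-1}(B_k)$, four consecutive non-clique elements into $B_k$, and counting the forced elements (at least $2(s+1)$ of them, plus $x_k$ itself) yields both $|B_k|\geq 2|\varphi_2^{-1}(B_k)|+3$ and the gap bound $3t-2\left(|\varphi_2^{-1}(B_k)|+1\right)$. Your packaging of the count as the length of the interval $[x_{j_1}+3t-1,\,x_{j_s}+3t+2]$ and your explicit appeal to the automorphism $x\mapsto -x$ for the $\psi_2$ statements are only minor reworkings of the paper's per-element blocking count and its implicit symmetry.
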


\begin{casefig}
			\scalebox{\casefigratio}{\begin{lpic}[]{"figs-unique/Observation4Blocks"(,22.5mm)}
			   \lbl[t]{92,21;\small$\varphi_4$}
			   \lbl[t]{45,21;\small$\psi_4$}
			   \lbl[l]{102,16.5;\tiny\emph{or}}
			   \lbl[r]{39,16.5;\tiny\emph{or}}
				\lbl[]{63,10.25;\footnotesize$x_{j}$}
				\lbl[]{81.5,10.25;\tiny$x_{j+1}$}
				\lbl[b]{70,15;$B_{j}$}
				\lbl[t]{20,7.5;\parbox{1.5cm}{\tiny$x_j-3t+2$\\$x_j-3t+3$}}
				\lbl[t]{128,7.5;\parbox{1.5cm}{\tiny$x_j+3t+1$\\$x_j+3t+2$}}
			\end{lpic}}
	\caption{Observation \ref{obs:fourblocks} and a 4-block $B_j$.}
\end{casefig}

\begin{observation}[4-blocks]\label{obs:fourblocks}
	Let $B_j$ be a 4-block, so $x_{j+1} = x_j + 4$.
	The elements $\{x_j + 3t - 1, x_j + 3t, x_j +3t + 3, x_j + 3t + 4\}$
		are not contained in $X$,
		so $X \cap \{x_j + 3t - 1,\dots,x_j + 3t + 4\} \subseteq \{x_j + 3t+1, x_j + 3t +2\}$.
	In $G$, no two elements of $X$ are consecutive elements of $\Z_n$,
		so there is at most one element in this range.
	If there is no element of $X$ in $\{x_j + 3t +1, x_j + 3t + 2\}$,
		then there is a block of size at least seven that contains $x_j + 3t+ 1$.
	Otherwise, there is a single element in $X \cap \{x_j + 3t + 1, x_j + 3t+ 2\}$
		and one of the adjacent blocks has size at least four.
	We use $\varphi_4(B_j)$ to denote one of these blocks of size at least four.
	By symmetry, we use $\psi_4(B_j)$ to denote a block of size at least four
		that contains or is adjacent to the block containing $x_j - 3t + 2$.
	In $G + \{0,1\}$, the only elements of $X$ that can be consecutive are $0$ and $1$,
		let $B_0 = \{0\}$ denote the first block of $X$.
	Thus, let $\varphi_4(B_j) = B_0$ if $x_j + 3t+1 = 0$
		and $\psi_4(B_j) = B_0$ if $x_j - 3t+2 = 0$.
\end{observation}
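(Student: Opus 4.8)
The plan is to unpack what it means for $X$ to be a clique in the Cayley complement $G = \cc{\Z_n}{\{1,3t-1,3t\}}$ and then read off the forbidden positions directly from the generators. Recall that two elements $x,y$ are adjacent in $G$ precisely when $x - y \notin \{\pm 1, \pm(3t-1), \pm 3t\} \pmod n$; hence for any clique $X$, no two of its elements differ by $1$, by $3t-1$, or by $3t$. This single fact drives the whole observation, and I would state it explicitly at the outset.

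First I would establish the four forbidden positions in the forward direction. Given a $4$-block $B_j$ with $x_j \in X$ and $x_{j+1} = x_j + 4 \in X$, subtracting the generators from these two members shows that $x_j + 3t - 1$ and $x_j + 3t$ are blocked by $x_j$ (differences $3t-1$ and $3t$), while $x_j + 3t + 3$ and $x_j + 3t + 4$ are blocked by $x_{j+1}$ (differences $(3t+3)-4 = 3t-1$ and $(3t+4)-4 = 3t$). Consequently the only candidates for membership in $X$ within the length-six window $\{x_j + 3t - 1, \dots, x_j + 3t + 4\}$ are $x_j + 3t + 1$ and $x_j + 3t + 2$; since these are consecutive in $\Z_n$ and $X$ has no two elements differing by $1$, at most one of them lies in $X$.

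The core of the argument is then a short split on how many of $\{x_j + 3t + 1, x_j + 3t + 2\}$ belong to $X$. If neither does, all six elements of the window are absent from $X$; six consecutive integers missing from $X$ lie strictly between two consecutive members of $X$, hence occupy a single block whose start is at most $x_j + 3t - 2$ and whose successor is at least $x_j + 3t + 5$, forcing size at least $7$ (and this block contains $x_j + 3t + 1$). If exactly one belongs to $X$, I would look at the side on which the remaining forbidden positions accumulate: when $x_j + 3t + 1 \in X$ the three integers $x_j + 3t + 2, x_j + 3t + 3, x_j + 3t + 4$ are all absent, so the block starting at $x_j + 3t + 1$ has size at least $4$; when $x_j + 3t + 2 \in X$ the three integers $x_j + 3t + 1, x_j + 3t, x_j + 3t - 1$ are all absent, so the block ending at $x_j + 3t + 1$ has size at least $4$. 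Either way there is a block of size at least four that contains or is adjacent to the surviving candidate, and I define $\varphi_4(B_j)$ to be it.

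Finally I would obtain $\psi_4(B_j)$ by repeating the computation in the decreasing direction (equivalently, via the automorphism $x \mapsto -x$, under which the generator set is invariant), which produces the backward window $\{x_j - 3t, \dots, x_j - 3t + 5\}$ with forbidden positions $x_j - 3t,\, x_j - 3t + 1,\, x_j - 3t + 4,\, x_j - 3t + 5$ and surviving candidates $x_j - 3t + 2,\, x_j - 3t + 3$, yielding a block of size at least four by the identical case analysis. The only genuinely extra point is the $G + \{0,1\}$ bookkeeping: there the added edge lets $0$ and $1$ coexist in $X$, creating the exceptional length-one block $B_0 = \{0\}$, so whenever the forward (resp. backward) candidate position coincides with $0$ I simply set $\varphi_4(B_j) = B_0$ (resp. $\psi_4(B_j) = B_0$). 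I expect the main obstacle to be phrasing the block-size bounds cleanly rather than any real difficulty: one must justify that six consecutive non-members lie in a single block of size at least $7$, and in the single-candidate case one must select the correct side so that three forbidden integers genuinely accumulate there. Everything else reduces to arithmetic with the two generators $3t-1$ and $3t$.
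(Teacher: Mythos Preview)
Your proposal is correct and follows essentially the same reasoning as the paper; indeed, the paper presents this as an observation with the justification embedded in the statement, and you have simply unpacked that reasoning in full detail (verifying each forbidden position against the generators $3t-1$ and $3t$ applied to $x_j$ and $x_{j+1}$, then carrying out the same two-case split on $|X \cap \{x_j+3t+1, x_j+3t+2\}|$). One tiny wording quibble: blocks begin at an element of $X$ rather than ``end at'' one, so in the case $x_j+3t+2 \in X$ it is cleaner to say the block containing $x_j+3t+1$ starts at some $x_k \leq x_j+3t-2$ and hence has size at least four.
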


	We now use a two-stage discharging method to prove that there is no $r$-clique $X$ in $G$.
	In Stage 1, we assign charge to the blocks of $X$ and discharge so that all blocks have
		non-negative charge.
	In Stage 2, we assign charge to the frames of $X$ using the new charges on the blocks
		and then discharge among the frames.
	
	\begin{figure}[ht]
		\centering
		$\xymatrix{
			\text{\textbf{Stage 1:} \textit{Blocks}} & \mu(B_j) \ar[rr]^{\text{discharge}} &\ & \mu^*(B_j) \ar[d]^{\text{defines}} && \\
			\text{\textbf{Stage 2:} \textit{Frames}} & && \nu^*(F_j) \ar[rr]^{\text{discharge}} &\ & \nu'(F_j)
		}$
		\caption{The two-stage discharging method.}
	\end{figure}
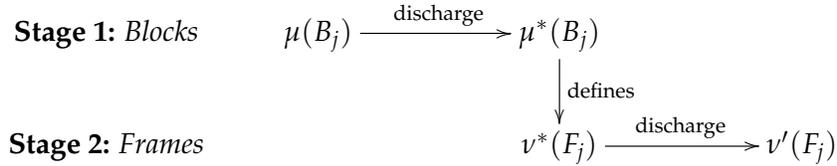
		
	We will use this framework three times, in Claims \ref{claim:threegenclique}, \ref{claim:complicateddischarging}, and \ref{claim:itIScomplicated}, 
		but we use a different set of rules for Stage 1 each time. 
	Stage 2 will always use the same discharging rule.

	\begin{claim}\label{claim:threegenclique}
		$\omega(G) < r$.
	\end{claim}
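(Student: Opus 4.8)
The plan is to prove $\omega(G) < r$ by a counting argument analogous to Claim~\ref{claim:twogenclique}, but now driven by the two-stage discharging method announced in the preceding text. The governing identity is the same as in the two-generator case: if $X$ were an $r$-clique, then summing $\sigma(F_j)$ over all $r$ frames counts each block length exactly $t$ times, so $\sum_{j=0}^{r-1}\sigma(F_j) = tn$. With $n = 9t^2-3t+1$ and $r = 3t^2-2t+1$, the target average charge per frame is $tn/r$, which is strictly less than $3t$ (indeed $tn = 9t^3-3t^2+t$ while $(3t)r = 9t^3-6t^2+3t$, so the per-frame budget falls short of $3t$ by a positive amount). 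The strategy is therefore to set up charges so that a clean contradiction emerges precisely from the gap between the true total $tn$ and the lower bound $3t\cdot r$ that discharging will try to establish.

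Concretely, in Stage~1 I would assign each block $B_j$ an initial charge $\mu(B_j)=|B_j|$ and discharge using Observations~\ref{obs:twoblocks}, \ref{obs:bigblocks}, and~\ref{obs:fourblocks}. The point of these observations is that small blocks cannot appear without forcing nearby large blocks: every 2-block forces blocks of size at least five both forward (via $\varphi_2$) and backward (via $\psi_2$), and several 2-blocks pointing at the same big block force it to be even larger (the bounds $|B_k|\ge 2|\varphi_2^{-1}(B_k)|+3$ and the symmetric one). The natural discharging rule is to have each big block donate its surplus above the baseline to the deficient 2-blocks (and 4-blocks) that force it, so that after Stage~1 every block satisfies $\mu^*(B_j)\ge 3$, i.e.\ the charge is at least the baseline for a 3-block. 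Stage~2 then sets $\nu^*(F_j)=\sum_{B_k\in F_j}\mu^*(B_k)$ and applies the uniform frame rule; the aim is to conclude $\nu'(F_j)\ge 3t$ for every frame, forcing $tn=\sum_j \nu'(F_j)\ge 3t\cdot r$ and contradicting $tn<3t\cdot r$.

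The main obstacle is making the Stage~1 bookkeeping balance: I must verify that the surplus on each big block is large enough to cover \emph{all} the small blocks that charge it, using exactly the quantitative bounds from Observations~\ref{obs:bigblocks} and~\ref{obs:fourblocks} on how many elements separate a big block from the 2-blocks (or 4-blocks) it dominates. The delicate point is that the $\varphi$ and $\psi$ maps are only guaranteed to land in \emph{some} nearby block of the stated minimum size, not a uniquely determined one, so I would need to fix a canonical assignment (say, always charge forward along $\varphi_2,\varphi_4$) and confirm no big block is overloaded beyond its surplus. A secondary subtlety is the frame-level accounting: because frames overlap, I must check that the Stage~2 rule correctly converts the per-block guarantee $\mu^*\ge 3$ into the per-frame guarantee $\nu'\ge 3t$ without double-counting donations, which is why Stage~2 is kept uniform and reused verbatim in the later claims. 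Once the charge conservation is checked at both stages, the arithmetic inequality $tn<3tr$ closes the argument, establishing $\omega(G)<r$.
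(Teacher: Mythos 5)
Your proposed contradiction rests on an arithmetic error that reverses the inequality you need. With $n = 9t^2-3t+1$ and $r = 3t^2-2t+1$ we have $tn = 9t^3-3t^2+t$ and $3tr = 9t^3-6t^2+3t$, so $tn - 3tr = t(3t-2) > 0$: the total $tn$ \emph{exceeds} $3tr$, and the per-frame average $tn/r = 3t + (r-1)/r$ is strictly \emph{greater} than $3t$ (just under $3t+1$). Consequently, even if your Stage~1 succeeds in giving every block charge at least $3$ (it can, and this is equivalent to the paper's Stage~1$\alpha$ in the normalization $\mu(B_j)=|B_j|-3$), the conclusion $\nu'(F_j)\ge 3t$ for every frame produces no contradiction whatsoever --- it is consistent with, indeed weaker than, the true identity $\sum_j \sigma(F_j) = tn = 3tr + (r-1)$. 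The bound you must reach is $3t+1$ per frame: since the total excess over the $3t$ baseline is exactly $r-1 < r$, showing every frame carries at least one unit of excess charge is what forces the contradiction. Your plan never aims at this threshold, so as written it cannot close.

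Reaching $3t+1$ is where the real work lies, and your proposal is missing the two ideas the paper uses for it. First, a frame whose blocks all have sizes in $\{2,3\}$ (zero excess charge after Stage~1) cannot consist of $t$ 3-blocks with at most one 2-block, because then $\sigma(F_j) \in \{3t-1,3t\} \subseteq S$, contradicting that $X$ is a clique; hence any such frame contains two 2-blocks $B_k, B_{k'}$ separated only by 3-blocks. Second, one needs a frame-to-frame discharging (the paper's Stage~2) in which such a frame pulls one charge from a frame containing both $\varphi_2(B_k)$ and $\varphi_2(B_{k'})$, via an injection $f_{k,k'}$ from frames containing the pair to frames containing the images; the delicate verification is that a donor frame $F$ with $\nu^*(F)\ge 1$ never gives away more than $\nu^*(F)-1$, which follows because $\ell$ donated charges force $\ell+1$ distinct 2-blocks mapping into $F$ under $\varphi_2$, so $\nu^*(F) \ge \ell+1$. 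Your ``uniform frame rule'' converting $\mu^*\ge 3$ into $\nu'\ge 3t$ is just summation over the $t$ blocks of a frame and does none of this; the double-counting worry you flag is not the issue. Without the forbidden-sum argument and the injection-based pull, the extra unit per frame --- and hence the theorem --- is out of reach.
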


\begin{proof}[Proof of Claim \ref{claim:threegenclique}]
	Suppose $X$ is an $r$-clique in $G$.
	
	Let $\mu$ be a charge function on the blocks of $X$ defined by $\mu(B_j) = |B_j| - 3$.
	All 2-blocks have charge $-1$, 3-blocks have charge $0$, and all other blocks have positive charge.
	Moreover, the total charge on all blocks is
		\[ \sum_{j=0}^{r-1} \mu(B_j) = n - 3r = 3t - 2.\]
	We shall discharge among the blocks to form a new charge function $\mu^*$.
	
\vspace{0.5em}
\noindent\textbf{Stage 1$\alpha$:}
\renewcommand{\thestagenum}{1$\alpha$}\refstepcounter{stagenum}\label{stage:normal}
	Discharge by shifting one charge from $\varphi_2(B_j)$ to $B_j$
		for every 2-block $B_j$.
\vspace{0.5em}
		
	After Stage \ref{stage:normal},
		 $\mu^*(B_j) = 0$ when $|B_j|\in \{2,3\}$, 
		$\mu^*(B_j) = 1$ when $|B_j| = 4$, and
	\[ \mu^*(B_j) =  |B_j| - 3 - |\varphi_2^{-1}(B_j)| \geq |\varphi_2^{-1}(B_j)|\]
	when $|B_j| \geq 5$. Note that if $|\varphi_2^{-1}(B_j)| = 0$ for a block $B_j$ of size at least five, then $\mu^*(B_j) \geq 2$.
	
	Now,  $\mu^*$ is a non-negative function and $\sum_{j=0}^{r-1} \mu^*(B_j) = 3t - 2$.
	
	For every frame $F_j$, define $\nu^*(F_j)$ as $\nu^*(F_j) = \sum_{B_{j+i} \in F_j} \mu^*(B_{j+i})$.
	Since every block is contained in exactly $t$ frames, the total charge on all frames is
		\[ \sum_{j=0}^{r-1} \nu^*(F_j) = t \sum_{j=0}^{r-1} \mu^*(B_j) = t(3t-2) = r - 1.\]
	
	There must exist a frame with $\nu^*(F_j) = 0$,
		and hence contains only 2- and 3-blocks.
	If this frame contained only blocks of length three and 
		at most one block of length two, 
		then $\sigma(F_j) \in \{3t-1, 3t\}$, 
		contradicting that $X$ is a clique.
	Thus, any frame with $\nu^*(F_j) = 0$ must contain at least two 2-blocks where all blocks between are 3-blocks.
			
	For each pair $B_k, B_{k'}$ of 2-blocks that are separated only by 3-blocks,
		define $L_{k,k'}$ to be the set of frames containing both $B_k$ and $B_{k'}$,
		and $R_{k,k'}$ to be the set of frames containing both $\varphi_2(B_k)$ and $\varphi_2(B_{k'})$. 
	If $\varphi_2(B_k) = \varphi_2(B_{k'})$, then $|R_{k,k'}| = t \geq |L_{k,k'}|$.
	Otherwise, there are fewer elements between $\varphi_2(B_k)$ and $\varphi_2(B_{k'})$ than between $B_k$ and $B_{k'}$, 
		and every block between $\varphi_2(B_k)$ and $\varphi_2(B_{k'})$ has size at least three 
			(a 2-block $B_j$ between $\varphi_2(B_k)$ and $\varphi_2(B_{k'})$
				would induce a large block $\psi_2(B_j)$ between $B_k$ and $B_k'$).
	Hence, there are at least as many blocks between $B_k$ and $B_k'$ as there are 
		between $\varphi_2(B_k)$ and $\varphi_2(B_{k'})$
		and so $|L_{k,k'}| \leq |R_{k,k'}|$.
	Let $f_{k,k'} : L_{k,k'} \to R_{k,k'}$ be any injection
		where $f_{k,k'}(F_j) = F_j$ for all $F_j \in L_{k,k'} \cap R_{k,k'}$.
		
	Using these injections, we discharge among the frames to form a new charge function $\nu'$.

\vspace{0.5em}
\noindent\textbf{Stage 2:}
		For every frame $F_j$ and every pair $B_k, B_{k'}$ of 2-blocks in $F_j$ separated by only 3-blocks,
			$F_j$ pulls one charge from $f_{k,k'}(F_j)$.
\vspace{0.5em}
		
	Since every frame $F_j$ with $\nu^*(F_j) = 0$ 
		has at least one such pair $B_k, B_{k'}$ 
		and does not contain $\varphi_2(B_i)$ for any 2-block $B_i$, 
		$F_j$ pulls at least one charge but does not have any charge removed.
	Thus, $\nu'(F_j) \geq 1$.
	
	We will show that frames $F_j$ with $\nu^*(F_j) \geq 1$
		have strictly less than $\nu^*(F_j)$ charge pulled during the second stage.
	Let $\{ (B_{k_i}, B_{k_i'};F_{j_i}) : i \in \{1, \dots, \ell\} \}$ 
		be the set of pairs	$B_{k_i}, B_{k_i'}$ of 2-blocks 
		and a common frame $F_{j_i}$ 
		where $f_{k_i,k_i'}(F_{j_i}) = F_j$.
	Since each map $f_{k_i,k_i'}$ is an injection, the blocks $B_{k_i}$ are distinct for all $i \in \{1,\dots, \ell\}$, and exactly $\ell$ charge was pulled from $F_j$.
	While $B_{k_i'}$ and $B_{k_{i+1}}$ may be the same block, 
		$B_{k_1},\dots,B_{k_\ell}, B_{k_\ell'}$ are $\ell+1$ distinct 2-blocks.
	Every block $B_{k_i}$ has $\varphi_2(B_{k_i}) \in F_j$ and $\varphi_2(B_{k_\ell'}) \in F_j$.
	Thus, $\nu^*(F_j) \geq \sum_{B_i \in F_j} |\varphi_2^{-1}(B_i)| \geq \ell + 1$
		which implies $\nu'(F_j) \geq 1$.
	
	Therefore, $\nu'(F_j) \geq 1$ for all frames $F_j$, and
		$r - 1 = \sum_{j=0}^{r-1} \nu'(F_j) \geq r$, a contradiction.
	Hence, there is no clique of size $r$ in $G$, 
		proving Claim \ref{claim:threegenclique}.
	\end{proof}

	 \def\cA{{\mathcal A}}
	 \def\cB{{\mathcal B}}
	 
	 For the remaining claims, we assume $X$ is an $r$-clique in $G + \{0,1\}$
	 	where $X$ contains both $0$ and $1$.
	Then, $B_0$ is the block containing exactly $\{0\}$,
		and all other blocks from $X$ have size at least two.
	Since $0$ and $1$ are in $X$, 
		the sets $\{ 3t - 1, 3t, 3t+1\}$ and $\{ -3t-1, -3t, -3t+1\}$ 
		of consecutive elements do not intersect $X$.
	Thus, there are two blocks $B_{k_1}$ and $B_{k_2}$ so that
		$\{3t-1, 3t, 3t+1\} \subset B_{k_1}$ and $\{ -3t-1, -3t, -3t+1\} \subset B_{k_2}$.
	When $B_{k_1}$ and $B_{k_2}$ are 4-blocks, 
		then $B_0 = \psi_4(B_{k_1}) = \varphi_4(B_{k_2})$
		as in Observation \ref{obs:fourblocks}.
	 
	With the assumption that there are no 2-blocks in $X$,
		uniqueness follows through an enumerative proof similar to Claim \ref{claim:twogenunique},
		given as Claim \ref{claim:threegenunique}.
	After this claim, Claims \ref{claim:complicateddischarging} and \ref{claim:itIScomplicated}
		show that $X$ has no 2-blocks, completing the proof.
	
	\begin{claim}\label{claim:threegenunique}
		There is a unique $r$-clique in $G + \{0,1\}$ with no 2-blocks.
	\end{claim}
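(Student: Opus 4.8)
The plan is to mirror the enumerative argument of Claim~\ref{claim:twogenunique}, replacing the two-generator bound $\sigma(F) \ge 2t+1$ by the three-generator bound forced by the absence of 2-blocks. Assume $X = \{0,1,x_2,\dots,x_{r-1}\}$ is an $r$-clique in $G + \{0,1\}$ with no 2-blocks, so that $B_0 = \{0\}$ is a 1-block and every other block has size at least three. Since no set of consecutive blocks may have $\sigma$-value in $S = \{1, 3t-1, 3t\}$, any frame $F$ of $t$ blocks each of size at least three satisfies $\sigma(F) \ge 3t$ and $\sigma(F) \ne 3t$, hence $\sigma(F) \ge 3t+1$.

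First I would set up the disjoint frame family
\[ \cF = \{\, F_{jt+1} : j \in \{0,\dots,3t-3\} \,\}, \]
consisting of $3t-2$ frames whose blocks are disjoint and cover every block except the 1-block $B_0$, so that $\sum_{F \in \cF}\sigma(F) = n-1 = 9t^2-3t$. Combining this with the per-frame bound $\sigma(F) \ge 3t+1$ yields $(3t-2)(3t+1) = 9t^2-3t-2$, so the integer program with constraints $\sigma(F)\ge 3t+1$ and $\sum_{F}\sigma(F) = n-1$ has total excess exactly two over the all-$(3t+1)$ solution. As in Claim~\ref{claim:twogenunique}, every frame of $\cF$ then satisfies $\sigma(F) = 3t+1$ except for at most two exceptional frames carrying the two extra units of charge. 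Converting to block data, the total size in excess of three among the $r-1$ non-$B_0$ blocks is $(n-1)-3(r-1) = 3t$; I would argue there are exactly $3t$ blocks of size four and all remaining non-$B_0$ blocks have size three, ruling out blocks of size five or more by the forbidden-sum Observation (a too-large block lets some sub-frame hit $3t-1$ or $3t$, exactly as a size-four block was excluded inside $F'$ in Claim~\ref{claim:twogenunique}). Because $t$ consecutive 3-blocks would give $\sigma = 3t \in S$, the 4-blocks must be spaced so that no window of $t$ consecutive blocks avoids them, the three-generator analogue of the separation $k_{j+1}-k_j = t-1$.

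Next I would pin down the positions. The paragraph preceding this claim already anchors two blocks: the elements $0,1\in X$ force blocks $B_{k_1}\supseteq\{3t-1,3t,3t+1\}$ and $B_{k_2}\supseteq\{-3t-1,-3t,-3t+1\}$, each adjacent to $B_0$ via the maps of Observation~\ref{obs:fourblocks}. Starting from these anchors I would chain the spacing constraint around $\Z_n$: the requirement that the $3t$ 4-blocks leave no $t$-window of 3-blocks, together with the forbidden sums that rule out 4-blocks placed too close together, forces the indices of the 4-blocks into a unique near-periodic pattern and hence determines $X$ completely. I would then confirm that the resulting $X$ is genuinely a clique by checking that its consecutive-block sums avoid $S$, as at the end of Claim~\ref{claim:twogenunique}.

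The main obstacle is this location step: simultaneously excluding blocks of size five or more and proving the 4-blocks are forced into a unique arrangement, including the delicate boundary interaction around $B_0$ and the two anchor blocks near $\pm 3t$, where the 1-block perturbs the clean periodicity that holds in the interior. I expect to need both the forward and backward induced blocks ($\varphi$ and $\psi$ of Observations~\ref{obs:twoblocks} and~\ref{obs:fourblocks}) to propagate the constraints in both directions and close the argument.
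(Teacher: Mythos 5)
Your proposal follows essentially the same route as the paper's own proof: the same disjoint frame family $\{F_{jt+1}\}$ covering every block except $B_0$ (your count of $3t-2$ frames is in fact the correct one, repairing an off-by-one slip in the paper's statement of the family), the same per-frame bound $\sigma(F)\geq 3t+1$ with total slack two, the same conclusion that $X$ consists of exactly $3t$ 4-blocks with all other non-$B_0$ blocks of size three (blocks of size at least five excluded by end-block sub-frame sums hitting $3t-1$ or $3t$), and the same final step anchoring the 4-blocks at $B_{k_1}$ and $B_{k_2}$ so that the gap constraints $\ell_{j+1}-\ell_j\in\{t-1,t\}$ admit the unique all-$(t-1)$ solution. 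The steps you flag as remaining work are precisely the ones the paper carries out, in the same way.
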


\begin{proof}[Proof of Claim \ref{claim:threegenunique}]
	Consider the frame family $\cF = \{ F_{jt+1} : j \in \{0,\dots, 3t - 2 \} \}$
		of $3t - 1$ disjoint frames.
	Note that the block $B_0$ is not contained in any of these frames.
	Since there are no 2-blocks, $\sigma(F_{jt+1}) \geq 3t$,
		but $\sigma(F_{jt+1}) \neq 3t$ so $\sigma(F_{jt+1}) \geq 3t+1$.
Thus, 

\[ n - 1 = \sum_{F_{jt+1} \in \cF} \sigma(F_{jt+1}) \geq (3t-1)(3t+1) = n - 3.\]

From this inequality we have $\sigma(F_{jt+1}) = 3t+1$ for all frames except
	either one frame $F_k$ with $\sigma(F_{k}) = 3t+3$ or
	two frames $F_k, F_{k'}$ with $\sigma(F_k) = \sigma(F_{k'}) = 3t+2$.

\begin{casefig}
			\scalebox{\casefigratio}{\begin{lpic}[]{"figs-unique/CaseClaim9a"(,22.5mm)}
			   \lbl[t]{50,20;$F_k$}
				\lbl[]{6,10.25;\footnotesize$x_{k}$}
				\lbl[]{103,10.25;\tiny$x_{k+t}$}
				\lbl[b]{53,12.35;\footnotesize$\leq 3t-7$ elements}
			\end{lpic}}
	\caption{\label{fig:claim:threegenunique}Claim \ref{claim:threegenunique}, $\sigma(F_k) = 3t+3$.}
\end{casefig}

	Suppose there is a frame $F_k$ with $\sigma(F_k) = 3t+3$.
Since $x_{k + t} = x_k + 3t + 3$, the elements 
\[ x_{k+t} - 3t = x_k + 3,
	\quad
	 x_{k+t} - (3t-1) = x_k + 4,\]
	\[ x_k + 3t - 1 = x_{k+t} - 4,
	 \quad\text{and}\quad
	 x_k + 3t = x_{k+t} -3,\]
	are not contained in $X$.
Since we have no 2-blocks, the elements $x_k + 2$ and $x_{k+t} - 2$ are not in $X$.
Thus, there are two blocks of size at least five in $F_k$.
This means there are $t-2$ blocks for the remaining $3t - 7$ elements, but 
	$t-2$ blocks of size at least three cover at least $3t - 6$ elements.
Hence, no frame has $\sigma(F_k)= 3t+3$.

Suppose we have exactly two frames $F_k, F_{k'} \in \cF$ 
	with $\sigma(F_k) = \sigma(F_{k'}) = 3t+2$.
If a frame $F_j$ contains a block of size at least six, then $\sigma(F_j) \geq 3t + 3$,
	so $F_k$ and $F_{k'}$ each contain either one 5-block or two 4-blocks.
However, if the first or last block (denoted by $B_j$) of $F_k$ (or $F_{k'}$) has size three,
	then $\sigma(F_k \setminus \{B_j\}) = 3t - 1$, a contradiction.
Thus, the first and last blocks of $F_k$ and $F_{k'}$ are not 3-blocks and hence
	are both 4-blocks.
Therefore, there are exactly two frames in $\cF$ containing exactly two 4-blocks
	and the rest contain exactly one 4-block, 
	for a total of $3t$ 4-blocks in $X$.

Let $\ell_1, \ell_2, \dots, \ell_{3t}$ be the indices of the 4-blocks.
Since each frame $F_{i}$ has at least one 4-block, $\ell_j \leq \ell_{j-1} + t$.
Also, if a frame $F_i$ has exactly two 4-blocks, then
	the blocks appear as the first and last blocks in $F_j$,
	giving $\ell_{j} \geq \ell_{j-1} + t - 1$.

Consider the position of $B_{\ell_1}$.
If $B_{\ell_1}$ is strictly between $B_0$ and $B_{k_1}$, then the frame $F_1$ contains two 4-blocks $B_{\ell_1}$ and $B_{k_1}$,
	and so $B_{\ell_1} = B_1$ and $B_{k_1} = B_t$.
But, there are $3t - 3$ elements between $B_0$ and $B_{k_1}$, but at least $3t-2$ elements between $B_0$ and $B_t$.
Therefore, $B_{\ell_1} = B_{k_1}$ and there are $t - 1$ 3-blocks between $B_0$ and $B_{\ell_1}$, so $\ell_1 = t - 1$.
Similarly, $B_{\ell_{3t}} = B_{k_2}$ and there are $t - 1$ 3-blocks between $B_{\ell_{3t}}$ and $B_0$, so $\ell_{3t} = (r-1) - (t-1) = 3t^2-3t+1$.

There is exactly one solution to the constraints $\ell_{j} \in \{\ell_{j-1}  + t - 1, \ell_{j-1} + t\}$ and
	$\ell_{3t} - \ell_1 = 3t^2 - 2t + 1 = (3t-1)(t-1)$ given by $\ell_j = \ell_{j-1} + t - 1$.
This uniquely describes $X$ as a clique in $G + \{0,1\}$.
\end{proof}

We now aim to show that there are no 2-blocks in an $r$-clique $X$ of $G$.
This property can be quickly checked computationally for $t \leq 4$, so we now assume that $t \geq 5$.

The problem with applying the discharging method from Claim \ref{claim:threegenclique}
	is that $B_0$ starts with charge $\mu(B_0) = -2$ and there is no clear
	place from which to pull charge to make $\mu^*(B_0)$ positive.
We define three values, $a$, $b,$ and $c$, which 
	quantify the \emph{excess charge} from Stage \ref{stage:normal} which can be redirected to $B_0$
	while still guaranteeing that all frames end with positive charge.
In Claim \ref{claim:complicateddischarging}, we assume $a + b + c \geq 3$ 
	and place all of this excess charge on $B_0$ 
	in Stage \ref{stage:complicated}, giving $\mu^*(B_0) \geq 1$;
	an identical Stage 2 discharging leads to positive charge on all frames.
In Claim \ref{claim:itIScomplicated}, Stage \ref{stage:bk1k2} pulls charge from $B_{k_1}$ and $B_{k_2}$ 
	to result in $\mu^*(B_0) = 0$ and possibly $\mu^*(B_{k_1}) = 0$ or $\mu^*(B_{k_2}) = 0$.
After Stage \ref{stage:bk1k2} and Stage 2, there may be some frames with $\nu'$-charge zero, 
	but they must contain $B_0$, $B_{k_1},$ or $B_{k_2}$.
By carefully analyzing this situation, we find a contradiction in that 
	either $X$ is not a clique or $a + b + c \geq 3$.

We now define the quantities $a$, $b$, and $c$.
	
If a block $B_j$ has size at least five and $\varphi_2^{-1}(B_j)$ is empty,
	then no charge is removed from $B_j$ in Stage \ref{stage:normal}. 
If charge is pulled from frames containing $B_j$ in Stage 2,
	there are other blocks that supply the charge required to stay positive.
Therefore, we define $a$ to be the excess $\mu$-charge
	that can be removed and maintain positive $\mu^*$-charge:
\[a = \sum_{B_j \in \cA} \left[|B_j| - 4\right],\text{ where $\cA$ is the set of blocks $B_j$ with $|B_j| \geq 5$ and $\varphi_{2}^{-1}(B_j) = \emptyset.$}\]
If a block $B_j$ has size at least five and $\varphi_2^{-1}(B_j)$ is not empty,
	charge is pulled from $B_j$ in Stage \ref{stage:normal}.
However, if $|B_{j}| > 2|\varphi_2^{-1}(B_j)| + 3$, there is more charge left
	after  Stage \ref{stage:normal} than is required in Stage 2 to maintain a positive charge on frames containing $B_j$.
We define $b$ to be the excess charge left in this situation:
\[b = \sum_{B_j \in \cB} \left[ |B_j| - (2|\varphi_2^{-1}(B_j)|+3)\right],\]
where $\cB$ is the set of blocks $B_j$ with $|B_j| \geq 5$ and	$\varphi_2^{-1}(B_j) \neq \emptyset$.

If there is a frame $F_j$ with three blocks $B_{\ell_0}, B_{\ell_1}, B_{\ell_2}$
	where $|B_{\ell_i}| \geq 4$ for all $i \in \{0,1,2\}$ and
	$\varphi_2^{-1}(B_{\ell_1}) = \emptyset$, then
	let $c = 1$; otherwise $c = 0$.
Since every frame containing $B_{\ell_1}$ also contains $B_{\ell_0}$ or $B_{\ell_2}$,
	these frames are guaranteed a positive $\nu'$-charge from $B_{\ell_0}$ or $B_{\ell_2}$,
	so the single charge on $B_{\ell_1}$ that was not pulled from previous rules is free to pass to $B_0$.

	 \begin{claim}\label{claim:complicateddischarging}
	 	Suppose $X$ is a set in $G + \{0,1\}$ with $|X| = r$.
		If $a + b + c \geq 3$, then $X$ is not a clique.
	 \end{claim}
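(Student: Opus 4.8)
The plan is to reuse verbatim the two-stage discharging engine from Claim~\ref{claim:threegenclique}, since the only thing obstructing that argument in the present setting is the block $B_0=\{0\}$. After running Stage~\ref{stage:normal} on $X$, the charges on every block of size at least two behave exactly as before, while $B_0$ carries $\mu^*(B_0)=|B_0|-3=-2$ and can never receive charge in Stage~\ref{stage:normal} because, by Observation~\ref{obs:twoblocks}, any block of the form $\varphi_2(B_j)$ has size at least five and so is never $B_0$. The total charge is unchanged, $\sum_j\mu^*(B_j)=n-3r=3t-2$, so after passing to frames the global budget is still $\sum_j\nu^*(F_j)=t(3t-2)=r-1$. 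Thus if I can repair the single deficit at $B_0$ without disturbing the rest of the argument, the contradiction $r-1=\sum_j\nu'(F_j)\ge r$ from Claim~\ref{claim:threegenclique} will reappear.

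First I would insert an intermediate rule, Stage~$1\beta$, executed after Stage~\ref{stage:normal} and before the frame charges are defined, whose sole purpose is to move exactly three units of charge onto $B_0$, raising it to $\mu^*(B_0)=1$. The three quantities $a$, $b$, $c$ are tailored precisely so that this is harmless. From a block $B_j\in\cA$ I draw at most $|B_j|-4$ units, leaving $\mu^*(B_j)=1$; from a block $B_j\in\cB$ I draw at most $|B_j|-(2|\varphi_2^{-1}(B_j)|+3)$ units, leaving $\mu^*(B_j)=|\varphi_2^{-1}(B_j)|$, which by Observation~\ref{obs:bigblocks} is exactly the amount Stage~2 can demand of $B_j$; and if $c=1$ I take the last free unit off the middle block $B_{\ell_1}$ of the three large blocks sharing a frame. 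Because $a+b+c\ge 3$, these sources supply at least three units, and since Stage~$1\beta$ only redistributes charge the global totals $3t-2$ and $r-1$ are preserved.

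With $B_0$ now at charge $1$ and every other block non-negative, I would rerun the Stage~2 frame discharging of Claim~\ref{claim:threegenclique} word for word. The key point to check is that the reduced block charges still satisfy the two invariants driving that argument: every frame with $\nu^*(F_j)=0$ contains only $2$- and $3$-blocks and so still pulls a unit via the injections $f_{k,k'}$, while every frame with $\nu^*(F_j)\ge 1$ still obeys $\nu^*(F_j)\ge\sum_{B_i\in F_j}|\varphi_2^{-1}(B_i)|$. For $\cB$-blocks this holds because I left exactly $|\varphi_2^{-1}(B_j)|$ charge; for frames containing a depleted $\cA$-block or the $c$-block, the retained unit (respectively a flanking large block guaranteed to lie in the same frame) supplies the required extra unit of charge, so those frames still finish at $\nu'(F_j)\ge 1$. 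The frames through $B_0$ now carry $\nu^*\ge 1$ from $B_0$ itself and need no repair. Hence $\nu'(F_j)\ge 1$ for all $r$ frames, giving $r\le\sum_j\nu'(F_j)=r-1$, a contradiction, so $X$ is not a clique.

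The main obstacle is this last verification: confirming that subtracting the amounts captured by $a$, $b$, and $c$ really leaves every frame enough charge to absorb its Stage~2 pulls. The delicate cases are the frames from which charge was extracted and the frames containing $B_0$, since $B_0$ contributes nothing to the $\sum|\varphi_2^{-1}(B_i)|$ bound. I expect to need the structural facts near $B_0$ (the forced blocks $B_{k_1},B_{k_2}$ of size at least four arising from $0,1\in X$), together with the fact that the two large neighbours of a $c$-block share every frame with it, to certify that each such frame retains positive charge after Stage~2.
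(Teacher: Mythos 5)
Your proposal is correct and follows essentially the same route as the paper: the paper's Stage~1$\beta$ consists of exactly the four pulls you describe (2-blocks pull from $\varphi_2$, and $B_0$ pulls the $a$-, $b$-, and $c$-excesses, the only cosmetic difference being that the paper lets $B_0$ absorb all of $a+b+c \geq 3$ rather than exactly three units), followed by an unchanged Stage~2 and the same contradiction $r \leq \sum_{j} \nu'(F_j) = r-1$. The verifications you defer are resolved exactly as you predict --- zero-$\nu^*$ frames contain only 2- and 3-blocks or $B_{\ell_1}$ (every frame through $B_{\ell_1}$ also contains $B_{\ell_0}$ or $B_{\ell_2}$), the invariant $\mu^*(B_i) \geq |\varphi_2^{-1}(B_i)|$ plus the $\ell+1$ count of 2-blocks handles every frame with pulls uniformly, including those through $B_0$ --- so the structural facts about $B_{k_1}, B_{k_2}$ you anticipated needing are never actually used in this claim.
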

	 
	 \begin{proof}[Proof of Claim \ref{claim:complicateddischarging}]
	 We proceed by contradiction, assuming that $a + b +  c \geq 3$ and $X$ is an $r$-clique.
	 We shall modify the two-stage discharging from Claim \ref{claim:threegenclique}
	 	with a more complicated discharging rule to handle $B_0$
		so that the result is the same contradiction: that all $r$ frames have positive
		charge, but the amount of charge over all the frames is $r - 1$.

	Let $\mu$ be the charge function on the blocks of $X$ defined by $\mu(B_j) = |B_j| - 3$.
	We discharge using Stage \ref{stage:complicated} to form the charge function $\mu^*$.

\vspace{0.5em}
\noindent\textbf{Stage 1$\beta$:} There are four discharging rules:
\renewcommand{\thestagenum}{1$\beta$}\refstepcounter{stagenum}\label{stage:complicated}	
	\begin{cem}
		\item If $|B_k| = 2$, $B_k$ pulls one charge from $\varphi_2(B_k)$. 
		
		\item $B_0$ pulls $|B_k| - 4$ charge from every block $B_k$ with $|B_k| \geq 5$ 
			and $\varphi_2^{-1}(B_k) = \emptyset$. 
		(The total charge pulled by $B_0$ in this rule is $a$.)
		
		\item $B_0$ pulls $|B_k| - (2|\varphi_2^{-1}(B_k)|+3)$ 
			charge from every block $B_k$ with $|B_k| \geq 5$ 
			and $\varphi_2^{-1}(B_k) \neq \emptyset$.
		(The total charge pulled by $B_0$ in this rule is $b$.)
			
		\item If there is a frame $F_j$ with three blocks $B_{\ell_0}, B_{\ell_1}, B_{\ell_2}$
	where $|B_{\ell_i}| \geq 4$ for all $i \in \{0,1,2\}$ and
	$\varphi_2^{-1}(B_{\ell_1}) = \emptyset$, then $B_0$ pulls one charge from $B_{\ell_1}$.
		(The amount of charge pulled by $B_0$ in this rule is $c$.)
	\end{cem}

	Since $a + b + c \geq 3$, $B_0$ pulls at least 3 charge, so $\mu^*(B_0) \geq 1$.
	Blocks of size two and three have $\mu^*$-charge zero.
	If a block $B_k$ has size four or has size at least five 
		and $\varphi_2^{-1}(B_k) = \emptyset$, then
		$\mu^*(B_k) = 1$ 
		except $B_{\ell_1}$ where $\mu^*(B_{\ell_1}) = 0$.
	 Similarly, a block $B_k$ of size at least five
	 	with $\varphi_2^{-1}(B_k) \neq \emptyset$ 
	 	has charge $\mu^*(B_k) = |\varphi_2^{-1}(B_k)|$.
	 
	For every frame $F_j$, define $\nu^*(F_j) = \sum_{B_{j+i} \in F_j} \mu^*(B_{j+i})$.
	Note that if the charge $\nu^*(F_j)$ is zero, every block in $F_j$ has zero charge
		since $\mu^*(B_{k}) \geq 0$ for all blocks.

\vspace{0.5em}
\noindent\textbf{Stage 2:}
		For every frame $F_j$ and every pair $B_k, B_{k'}$ of 2-blocks in $F_j$ separated by only 3-blocks,
			$F_j$ pulls one charge from $f_{k,k'}(F_j)$.
\vspace{0.5em}
	
	If $\nu^*(F_j) = 0$, then $F_j$ contains only blocks $B_k$ with $\mu^*(B_k) = 0$.
	These blocks are 2-blocks, 3-blocks, and $B_{\ell_1}$.
	However, any frame which contains $B_{\ell_1}$ also contains $B_{\ell_0}$ or $B_{\ell_2}$ which have positive charge.
	Thus, frames $F_j$ with $\nu^*(F_j) = 0$ contain only 2- and 3-blocks.
	Since $\sigma(F_j) \notin \{ 3t, 3t- 1\}$, $F_j$ must contain at least two 2-blocks
		$B_k, B_{k'}$,
		so $F_j$ pulls at least one charge in the second stage and loses no charge,
		so $\nu'(F_j) \geq 1$.
	
	If $\nu^*(F_j) \geq 1$, the amount of charge pulled from $F_j$ in Stage 2 is the number of
		2-block pairs $B_k, B_{k'}$ separated by 3-blocks so that $\varphi_2(B_k), \varphi_2(B_{k'}) \in F_j$.
	Observe $\mu^*(B_i) = |\varphi_2^{-1}(B_i)|$ for all blocks $B_i$ with
		$\varphi_2^{-1}(B_i) \neq \emptyset$,
		so $\nu^*(F_j) = \sum_{B_i \in F_j} \mu^*(B_i) \geq \sum_{B_i \in F_j} |\varphi_2^{-1}(B_i)|$.
	If there are $\ell$ pairs $B_{k}, B_{k'}$ that pull one charge from $F_j$ in Stage 2, 
		then there are at least $\ell+1$ 2-blocks in $\cup_{B_i \in F_j} \varphi_2^{-1}(B_i)$,
		and $\nu^*(F_j) \geq \ell + 1$.

	Therefore, $\nu'(F_j) \geq 1$ for all $j \in \{0,\dots,r-1\}$, but since 
		\[r \leq \sum_{j=0}^{r-1} \nu'(F_j) = \sum_{j=0}^{r-1} \nu^*(F_j) = t\sum_{j=0}^{r-1} \mu^*(B_j) = t\sum_{j=0}^{r-1} \mu(B_j) = t(n - 3r) = r-1,\]
		we have a contradiction, and so $X$ is not a clique.
	 \end{proof}
	 	
	\begin{claim}\label{claim:itIScomplicated}
		If $X$ is an $r$-clique in $G+ \{0,1\}$ that contains a 2-block,
			then $a + b + c\geq 3$.
	\end{claim}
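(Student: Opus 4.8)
The plan is to adapt the two-stage discharging of Claim~\ref{claim:threegenclique}, changing only how the deficit on $B_0$ is repaired: rather than spending the surplus $a+b+c$ on $B_0$ as in Claim~\ref{claim:complicateddischarging}, I would raise $B_0$ only to charge $0$ by pulling one unit each from the two blocks $B_{k_1}$ and $B_{k_2}$ that the nonedge $\{0,1\}$ forces, and then localize any surviving charge deficiency to frames that meet $B_0$, $B_{k_1}$, or $B_{k_2}$.

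I would first record that $B_{k_1}$ and $B_{k_2}$ each have size at least $4$: each strictly contains three consecutive non-clique elements (namely $\{3t-1,3t,3t+1\}$ and $\{-3t-1,-3t,-3t+1\}$) with elements of $X$ on both sides, so $|B_{k_i}| \geq 4$ and $\mu(B_{k_i}) \geq 1$. Starting from $\mu(B_j)=|B_j|-3$, I would run the following first stage.

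\vspace{0.5em}
\noindent\textbf{Stage 1$\gamma$:}
\renewcommand{\thestagenum}{1$\gamma$}\refstepcounter{stagenum}\label{stage:bk1k2}
Keep the rule of Stage~\ref{stage:normal} (every $2$-block $B_j$ pulls one charge from $\varphi_2(B_j)$), and in addition let $B_0$ pull one charge from $B_{k_1}$ and one from $B_{k_2}$.
\vspace{0.5em}

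Since $\mu(B_0)=-2$, this gives $\mu^*(B_0)=0$. Using $|B_{k_i}| \geq 2|\varphi_2^{-1}(B_{k_i})|+3$ from Observation~\ref{obs:bigblocks}, the extra unit removed from each $B_{k_i}$ still leaves $\mu^*(B_{k_i}) \geq 0$, every block of size at least $5$ other than $B_{k_1},B_{k_2}$ keeps charge at least $1$, and all remaining charges are non-negative exactly as in Claim~\ref{claim:threegenclique}; in particular $B_{k_i}$ can drop to $0$ when it is a $4$-block. I would then apply the Stage~2 of Claim~\ref{claim:threegenclique} verbatim, so that a frame consisting only of $2$- and $3$-blocks either has $\sigma(F)\in\{3t-1,3t\}$, which is impossible for a clique, or contains a pair of $2$-blocks separated only by $3$-blocks and hence pulls a unit of charge. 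Thus the only frames that can finish with zero charge are those whose blocks all have charge $0$ and that carry no such $2$-block pair; these must contain one of the charge-$0$ defect blocks $B_0$, $B_{k_1}$, or $B_{k_2}$. Since the total frame charge is $t(n-3r)=r-1<r$, at least one defect frame exists.

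The main obstacle is the case analysis turning the existence of a defect frame into $a+b+c\geq 3$. I would use the geometry that the $t$ frames meeting $B_0$ span about $3t$ consecutive elements on each side of $0$, so they also meet $B_{k_1}$ on the forward side or $B_{k_2}$ on the backward side; when one of $B_{k_1},B_{k_2}$ is exactly a $4$-block, Observation~\ref{obs:fourblocks} identifies it with $\psi_4$ or $\varphi_4$ of $B_0$ and fixes its position. The analysis would split according to whether each of $B_{k_1},B_{k_2}$ is a $4$-block or has size $\geq 5$, and according to where the assumed $2$-block(s) lie relative to these blocks. In each branch the constraints $\sigma(F)\notin\{3t-1,3t\}$, together with Observation~\ref{obs:twoblocks} (which guarantees that any $2$-block induces blocks $\varphi_2$ and $\psi_2$ of it of size at least $5$), force surplus charge that I would tally through the definitions of $a$, $b$, and $c$; showing that this surplus is always at least $3$ proves the claim. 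I expect the hardest sub-case to be when both $B_{k_1}$ and $B_{k_2}$ are $4$-blocks, since they then donate all of their slack to $B_0$, the frames adjacent to $B_0$ have the least room, and locating the three required units of surplus demands a careful position-by-position accounting of the $2$-blocks and their induced large blocks.
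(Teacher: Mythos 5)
Your setup reproduces the paper's Stage 1$\gamma$ exactly: every 2-block pulls one charge from its $\varphi_2$-image, $B_0$ pulls one unit from each of $B_{k_1}$ and $B_{k_2}$, Stage 2 is unchanged, all frames end with non-negative charge summing to $r-1$, and the only frames that can end at zero are those meeting $B_0$, $B_{k_1}$, or $B_{k_2}$. Up to that point you and the paper agree. But from there the proposal stops being a proof: the entire derivation of $a+b+c\geq 3$ is deferred to a case analysis described only by intent (``I would split\dots'', ``force surplus charge that I would tally''), and that analysis \emph{is} the content of the claim. Nothing in the proposal actually exhibits a block of size at least five with empty $\varphi_2^{-1}$-preimage (which is what $a$ counts), a block $B_j$ with $|B_j| > 2|\varphi_2^{-1}(B_j)|+3$ (which is what $b$ counts), or the three-big-block frame configuration behind $c$, so the inequality $a+b+c\geq 3$ is never established in any case.

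The missing machinery is substantial, and the split you sketch (sizes of $B_{k_1},B_{k_2}$; positions of 2-blocks near $B_0$) would not suffice, because the surplus witnessing $a+b+c\geq 3$ need not lie anywhere near $B_0$, $B_{k_1}$, or $B_{k_2}$. The paper needs: (i) the notion of pull sets and their defects, to track how much charge on big blocks survives Stage 2; (ii) the counting bound that if at most $t+1$ frames have $\nu'$-charge zero then $\sum_{j:\nu'(F_j)>0}[\nu'(F_j)-1]\leq t$, together with Claims \ref{claim:PullSetWithTwos}--\ref{claim:Bk2EQ4}, which eliminate whole configurations by exhibiting frame families $\mathcal{H}$ with $\sum_{F\in\mathcal{H}}[\nu'(F)-1]\geq t+1$; these subclaims chain through sequences of pull sets $\mathcal{P}^{(0)},\mathcal{P}^{(1)},\dots$ that can wander far around $\Z_n$, which is exactly the global phenomenon a local analysis around $B_0$ misses; and (iii) crucially, Claim \ref{claim:bstar}: some zero-charge frame $F_z$ contains a block $B_*$ lying outside every interval from $\psi_2(B_j)$ to $\varphi_2(B_j)$ over all 2-blocks $B_j$. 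It is $B_*$ that makes the final step work: taking $B_j$ to be the first 2-block after $B_*$ in cyclic order forces $\varphi_2^{-1}(\psi_2(B_j))=\emptyset$, so $\psi_2(B_j)$ genuinely contributes to $a$, and the concluding cases on $|\psi_2(B_j)|\in\{5,6\}$ and on the blocks between $\psi_2(B_j)$ and $B_j$ then yield $a+b+c\geq 3$ or contradict $\nu'(F_z)=0$. Without an idea playing the role of $B_*$, you cannot certify that \emph{any} big block has empty $\varphi_2$-preimage, hence cannot make any charge count toward $a$ or $c$; that is the genuine gap in the proposal.
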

	
	\begin{proof}[Proof of Claim \ref{claim:itIScomplicated}]
	We shall repeat the two-stage discharging from Claim \ref{claim:threegenclique} 
		with a simpler rule for discharging to $B_0$ than in Claim \ref{claim:complicateddischarging}.
	After this discharging is complete, we will investigate the configuration of blocks surrounding one of the 2-blocks and show that the sum $a + b + c$ has value at least three.
	
	Let $\mu$ be the charge function on the blocks of $X$ defined by $\mu(B_j) = |B_j| - 3$.
	We use Stage \ref{stage:bk1k2} to discharge among the blocks and form a charge function $\mu^*$.
	
\vspace{0.5em}
\noindent\textbf{Stage 1$\gamma$:} 
\renewcommand{\thestagenum}{1$\gamma$}\refstepcounter{stagenum}\label{stage:bk1k2}
	We have two discharging rules:
	\begin{cem}
		\item If $|B_j| = 2$, $B_j$ pulls one charge from $\varphi_2(B_j)$.
		\item $B_0$ pulls one charge from $B_{k_1}$ and one charge from $B_{k_2}$.
	\end{cem}
	
	After the first rule within Stage \ref{stage:bk1k2} 
		there is at least one charge on all
		blocks of size at least four.
	Thus, removing one more charge from each of $B_{k_1}$ and $B_{k_2}$ 
		in the second rule of Stage \ref{stage:bk1k2} maintains
		that $\mu^*(B_{k_1})$ and $\mu^*(B_{k_2})$ are non-negative.
	Since $B_0$ receives two charge and every 2-block receives one charge,
		$\mu^*(B_j)$ is non-negative after Stage \ref{stage:bk1k2} for all blocks $B_j$.
	
	Define the charge function $\nu^*(F_j) = \sum_{B_i \in F_j} \mu^*(B_i)$.

\vspace{0.5em}
\noindent\textbf{Stage 2:}
		For every frame $F_j$ and every pair $B_k, B_{k'}$ of 2-blocks in $F_j$ separated by only 3-blocks,
			$F_j$ pulls one charge from $f_{k,k'}(F_j)$.
\vspace{0.5em}
	
	Again, $\sum_{j=0}^{r-1}\nu'(F_j) = r-1$.
	Also, $\nu'(F_j) > 0$ whenever $F_j$ contains a block of order at least four that is not $B_{k_1}$ or $B_{k_2}$,
		or $F_j$ contains two 2-blocks separated only by 3-blocks.
	Since one charge was removed from $B_{k_1}$ and $B_{k_2}$ in Stage \ref{stage:bk1k2},
		the frames containing $B_{k_1}$ or $B_{k_2}$ are no longer guaranteed to have positive charge, but
		still have non-negative charge.
	In order to complete the proof of Claim \ref{claim:itIScomplicated},
		we must more closely analyze the charge function $\nu'$.

\def\cP{{\mathcal P}}

\begin{definition}[Pull sets]
	A \emph{pull set} is a set of blocks, $\cP = \{ B_{i_1},\dots, B_{i_p}\}$,
		where $|B_{i_j}| \geq 5$ 
		for all $j \in \{1,\dots, p\}$ and
		all blocks between $B_{i_j}$ and $B_{i_{j+1}}$ are 3-blocks.
	Let $\varphi_2^{-1}(\cP) = \displaystyle\cup_{B_i \in \cP} \varphi_2^{-1}(B_i)$.
	A pull set $\cP$ is \emph{perfect} if all blocks $B_i \in \cP$ have
		$|B_i| = 2|\varphi_2^{-1}(B_i)| + 3$.
	Otherwise, a pull set $\cP$ contains a block
		$B_i \in \cP$ with $|B_i| \geq 2|\varphi_2^{-1}(B_i)| + 4$ and $\cP$ is \emph{imperfect}.
	Given a pull set $\cP$, the \emph{defect} of $\cP$ is $\delta(\cP) = \sum_{B_i \in \cP} \left[\mu^*(B_i) - |\varphi_2^{-1}(B_i)|\right] - 1$.
\end{definition}

The defect $\delta(\cP)$ measures the amount of excess charge (more than one charge) 
	the pull set $\cP$ contributes to the $\nu'$-charge of any frame containing $\cP$.
Note that pull sets $\cP$ with $B_{k_1}, B_{k_2} \notin \cP$ have defect $\delta(\cP) \geq 0$, with equality if and only if $\cP$ is perfect.
Perfect pull sets $\cP$ containing $B_{k_1}$ or $B_{k_2}$ have defect $\delta(\cP) = -1$.
For a block $B_i \in \cP$, if $d \leq \mu^*(B_i) - |\varphi_2^{-1}(B_i)|$ then we say $B_i$ \textit{contributes} $d$ to the defect of $\cP$.

Consider a pull set $\cP = \{ B_{i_1},\dots, B_{i_p}\}$.
Since there are at most $3t - 4$ elements between $\varphi_2^{-1}(B_{i_p})$ and $B_{i_p}$	and all blocks from $B_{i_1}$ to $B_{i_p}$ have order at least three,
	there exists a frame that contains all blocks of $\cP$.
Therefore, every pull set is contained within \emph{some} frame.

If $B_i$ is a block with $|B_i|\geq5$, then $\cP = \{ B_i\}$ is a (not necessarily maximal) pull set, and $\{ B_i\}$ is a subset of each frame containing $B_i$.
For every frame $F_j$ and block $B_i \in F_j$ with  $|B_i|\geq5$ there is a unique maximal pull set $\cP \subseteq F_j$ containing $B_i$.
Thus, if there are multiple maximal pull sets within a frame $F_j$, then they are disjoint.

\begin{observation}\label{obs:pullsets}
	Let $X$ be an $r$-clique and $\nu'$ be the charge function on frames of $X$ after Stage \ref{stage:bk1k2} and Stage 2.
	Then, for a frame $F_j$, $\nu'(F_j)$ is at least the sum of 
		\begin{cem}
			\item the number of distinct pairs $B_{k}, B_{k'}$ of 2-blocks in $F_j$ separated only by 3-blocks,
			\item the number of 4-blocks in $F_j$ not equal to $B_{k_1}, B_{k_2}$, 
			\item $1 + \delta(\cP)$ for every maximal pull set $\cP \subseteq F_j$.
		\end{cem}
\end{observation}

In Claim \ref{claim:bstar}, we prove there exists a special block $B_*$ in a frame $F_z$ with $\nu'(F_z) = 0$.
The proof of Claim \ref{claim:bstar} reduces to three special cases which are handled in Claims \ref{claim:PullSetWithTwos}-\ref{claim:Bk2EQ4}.

Recall $\sum_{j=0}^{r-1} \nu'(F_j) = r-1$.
Let $Z$ be the number of frames $F$ with $\nu'(F) = 0$.
Then,
\begin{align*}
	\sum_{j : \nu'(F_j) > 0} \left[\nu'(F_j)-1\right] &= \sum_{j=0}^{r-1} \left[\nu'(F_j)-1\right] + Z
		= (r-1) - r + Z = Z - 1.
\end{align*}

Therefore, if there are at most $t + 1$ frames with $\nu'$-charge zero ($\nu'(F_j) = 0$),
	then the sum $\sum_{j : \nu'(F_j) > 0} [\nu'(F_j) - 1]$ is bounded above by $t$.
The proof of Claim \ref{claim:bstar} frequently reduces to a contradiction with this bound.
Claims \ref{claim:PullSetWithTwos}-\ref{claim:Bk2EQ4} provide some situations which guarantee this sum has value at least $t + 1$.

\renewcommand{\theSuperClaim}{\ref{claim:itIScomplicated}}
\begin{subclaim}\label{claim:PullSetWithTwos}
	Let $\cP$ be a pull set containing a block $B_j$.
    If $|\varphi_2^{-1}(\cP)| \geq 2$ and $x_{k_1} + 6t^2 \leq x_j \leq x_{k_2}$, 
		then there is a set $\cH$ of frames with $\sum_{F_j \in \cH} (\nu'(F_j)-1) \geq t + 1$.
\end{subclaim}

\begin{proof}[Proof of Claim \ref{claim:PullSetWithTwos}]
	Starting with $\cP^{(0)} = \cP$, we construct a sequence $\cP^{(0)}$, $\cP^{(1)}$, $\dots$, $\cP^{(\ell)}$ of pull sets with $\ell \leq \lceil\frac{t+1}{2}\rceil + 1$.
	We build $\cP^{(k)}$ by following the map $\psi_2$ 
		from $\varphi_2^{-1}(\cP^{(k-1)})$.
	This process will continue until one of the sets is not a pull set, one of the sets is an imperfect pull set, or we reach $\lceil\frac{t+1}{2}\rceil$ pull sets.
	In either case, we find a set $\cH$ of frames that satisfies the claim.

	We initialize $\cP^{(0)}$ to be $\cP$, which contains $B_j$.
	Note that it is possible that $B_j = B_{k_2}$, but otherwise $B_j$ precedes $B_{k_2}$.
	There will be at most $6t$ elements covered by the blocks starting at $\cP^{(k)}$ to the blocks preceding $\cP^{(k-1)}$.
	Note that since $x_j - x_{k_1} \geq 6t^2$, $\cP^{(k)}$ will not contain $B_{k_1}$ or $B_{k_2}$ for any $k \in \{1,\dots, \lceil \frac{t+2}{2}\rceil\}$.
			
	Let $k \geq 1$ be so that $\cP^{(k-1)}$ is a perfect pull set with $|\varphi_2^{-1}(\cP^{(k-1)})| \geq 2$.
	For every block $B_i \in \cP^{(k-1)}$, let $B_\ell$ be a 2-block in $\varphi_2^{-1}(B_i)$ and place $\psi_2(B_\ell)$ in $\cP^{(k)}$.
	Then, place any block of size at least five that is positioned between to blocks of $\cP^{(k)}$ into $\cP^{(k)}$.
	
	If $\cP^{(k)}$ is always perfect for all $k \leq \lceil\frac{t+1}{2}\rceil$, then we have pull sets $\cP^{(0)}$, $\dots$, $\cP^{(k)}$ and frames $F_{j_0}$, $F_{j_0'}$, $\dots$, $F_{j_{k-1}}$, $F_{j_{k-1}'}$, 
		where $k = \lceil\frac{t+1}{2}\rceil$.
	Thus, let $\cH = \{ F_{j_\ell}, F_{j_\ell'} : \ell \in \{1,\dots, k\}\}$ and $\sum_{F \in \cH}[\nu'(F)-1] \geq t + 1$, proving the claim.
	It remains to show that such a set $\cH$ exists if some $\cP^{(k)}$ is imperfect.
	
	If $\cP^{(k)}$ is a perfect pull set with $|\varphi_2^{-1}(\cP^{(k)})| \geq 2$, then let $F_{j_k}$ be the frame that starts at the last block of $\cP^{(k)}$ and
		$F_{j_k'}$ be the frame that ends at the first block of $\cP^{(k)}$.
	We claim that $F_{j_k}$ and $F_{j_k'}$ have $\nu'$-charge at least two.
	There are at most $3t - 4$ elements between the last block in $\cP^{(k)}$
		and the last 2-block in $\psi_2^{-1}(\cP^{(k)})$.
	If there is at most one 2-block in $F_{j_k}$, then $\sigma(F_{j_k}) \geq 2 + 3(t-2) + 5 = 3t+3$
		and $F_{j_k}$ contains all 2-blocks in $\psi_2^{-1}(\cP^{(k)})$, a contradiction.
	Therefore, the frame $F_{j_k}$ contains at least two 2-blocks.
	If those 2-blocks are separated by three blocks, they pull at least one charge in Stage 2.
	If those 2-blocks are not separated by three blocks, then either they are separated by a 4-block (which contributes at least one charge) 
		or a second maximal pull set (which contributes at least one charge).
	Thus, $\nu'(F_{j_k}) \geq 2$.
	By a symmetric argument, $F_{j_k'}$ contains two 2-blocks and has $\nu'(F_{j_k'}) \geq 2$.
	Figure \ref{fig:BuildingCPK} shows how the frames $F_{j_k}$ and $F_{j_k'}$ are placed among the pull sets $\cP^{(k-1)}$ and $\cP^{(k)}$.
			
\begin{casefig}
			\scalebox{\casefigratio}{\begin{lpic}[]{"figs-unique/ClaimPullSetChain"(,22.5mm)}
			   \lbl[t]{28,3.75;\footnotesize $\varphi_2^{-1}(\cP^{(k)})$}
			   \lbl[t]{60.4,3.75;\footnotesize $\cP^{(k)}$}
			   \lbl[t]{93.5,3.75;\footnotesize $\varphi_2^{-1}(\cP^{(k-1)})$}
			   \lbl[t]{126,3.75;\footnotesize $\cP^{(k-1)}$}
			   \lbl[t]{13,8.5; \footnotesize $F_{j_{k+1}}$}
			   \lbl[t]{42.5,8.5; \footnotesize $F_{j_k'}$}
			   \lbl[t]{78,8.5; \footnotesize $F_{j_{k}}$}
			   \lbl[t]{108.5,8.5; \footnotesize $F_{j_{k-1}'}$}
				\lbl[tr]{8,19.65;\footnotesize$\psi_2$}
				\lbl[tl]{46,19.65;\footnotesize$\varphi_2$}
				\lbl[tr]{74,19.65;\footnotesize$\psi_2$}
				\lbl[tl]{111,19.65;\footnotesize$\varphi_2$}
			\end{lpic}}
	\caption{\label{fig:BuildingCPK}Claim \ref{claim:PullSetWithTwos}, building $\cP^{(k)}$ and frames $F_{j_k}, F_{j_k'}$.} 
\end{casefig}

	If $\cP^{(k)}$ is not a perfect pull set or $|\varphi_2^{-1}(\cP^{(k)})| < 2$, 
		either $\cP^{(k)}$ is not a pull set or $\cP^{(k)}$ is an imperfect pull set.
	
	\begin{mycases}
		\case{$\cP^{(k)}$ is not a pull set.} 
			In this case, there is a non-3-block $B_j$ not in $\cP^{(k)}$ that is between two blocks $B_{\ell_1}, B_{\ell_2}$ of $\cP^{(k)}$.
			If $|B_j| \geq 5$, then $B_j$ would be added to $\cP^{(k)}$.
			Therefore, $|B_j| \in \{2,4\}$.
			
			\begin{subcases}
				\subcase{$|B_j| = 4$.}
					Every frame containing $B_j$ also contains either $B_{\ell_1}$ or $B_{\ell_2}$.
					Therefore, these $t$ frames contain a 4-block and at least one pull set with non-negative defect so they have $\nu'$-charge at least two.
					The frame starting at $B_{\ell_1}$ also contains $B_j$ and $B_{\ell_2}$,
						so this frame has two disjoint maximal pull sets and a 4-block and has $\nu'$-charge at least three.
					Therefore, if $\cH$ is the family of frames containing $B_j$, $\sum_{F \in \cH} [\nu'(F)-1] \geq t + 1$.

\begin{casefig}
			\scalebox{\casefigratio}{\begin{lpic}[]{"figs-unique/ClaimPullSetTwoBlock"(,22.5mm)}
			 	\lbl[tr]{25,19.65;\footnotesize$\psi_2$}
				\lbl[tl]{92,19.65;\footnotesize$\varphi_2$}
				\lbl[t]{15,12;$B_j$}
				\lbl[t]{41.5,12;$B_{\ell_1}$}
				\lbl[t]{75.75,12;$B_{\ell_2}$}
				\lbl[t]{97,12;$B_{g_1}$}
				\lbl[t]{132,12;$B_{g_2}$}
				\lbl[b]{57.5,14;\small $\varphi_2(B_j)$}
				\lbl[b]{114,15;\footnotesize 3-blocks}
			\end{lpic}}
	\caption{\label{fig:BuildingCPK}Claim \ref{claim:PullSetWithTwos}, Case \ref{subcase:TwoBlockInPullSet}.}
\end{casefig}
				\subcase{$|B_j| = 2$.}\label{subcase:TwoBlockInPullSet}
					Let $B_{\ell_1}$ be the last 2-block preceding $\varphi_2(B_j)$ and $B_{\ell_2}$ be the first 2-block following $\varphi_2(B_j)$.
					Note that $B_j$ is between $\psi_2(B_{\ell_1})$ and $\psi_2(B_{\ell_2})$, which must be in $\cP^{(k)}$.
					
					\subsubcaseitem Suppose $\{\varphi_2(B_j)\}$ is an imperfect pull set. 
						Then $\varphi_2(B_j)$ contributes one to the defect of any pull set containing $\varphi_2(B_j)$.
					Place all frames containing $\varphi_2(B_j)$ into $\cH$, as they have $\nu'$-charge at least two.
					Also place the frame $F$ starting at $\psi_2(B_{\ell_1})$ into $\cH$.
					If $F$ also contains $\psi_2(B_{\ell_2})$, it contains two disjoint maximal pull sets and thus has $\nu'$-charge at least two.
					Otherwise, $F$ must contain at least two 2-blocks which either pull a charge in Stage 2 or are separated by a block of size at least four 
						and $\nu'(F) \geq 2$ in any case.
					This frame family $\cH$ satisfies the claim.
					
					\subsubcaseitem Suppose $\{ \varphi_2(B_j)\}$ is a perfect pull set.
					Therefore, $|\varphi_2(B_j)| = 3 + 2h$ for some integer $h \geq 1$ and hence is odd.
					Let $B_{g_1} = \varphi_2(B_{\ell_1})$ and $B_{g_2} = \varphi_2(B_{\ell_2})$.
					Since $B_{g_1}$ and $B_{g_2}$ are in $\cP^{(k-1)}$ 
						and $\cP^{(k-1)}$ is a pull set, there are only 3-blocks between $B_{g_1}$ and $B_{g_2}$.
					Therefore, the elements $x_{g_1+1},x_{g_1+2},\dots, x_{g_2}$ have $x_{g_1 + i + 1} = x_{g_1 + i}  + 3$ for all $i \in \{1,\dots, g_2 - g_1 - 1\}$.
					The generators $3t - 1$ and $3t$ guarantee that the elements of $X$ strictly between $x_{\ell_1}$ and $x_{\ell_2}$
						are a subset of $\{ x_{\ell_1} + 2 + 3i : i \in \{0,1,\dots, g_2 - g_1\}\}$.
					Therefore, all blocks between $B_{\ell_1}$ and $B_{\ell_2}$ (including $\varphi_2(B_j)$) have size divisible by three.
					So, $|\varphi_2(B_j)|$ is an odd multiple of three, but strictly larger than three; $|\varphi_2(B_j)| \geq 9$ and $|\varphi_2^{-1}(\varphi_2(B_j))| \geq 3$.
										
					There are $t - 2$ frames containing the first three 2-blocks in $\varphi_2^{-1}(\varphi_2(B_j))$.
					Since these 2-blocks are consecutive, each frame pulls two charge in Stage 2.
					Also, let $F'$ be the frame whose last two blocks are the first two 2-blocks in $\varphi_2^{-1}(\varphi_2(B_j))$
						and let $F''$ be the frame whose first two blocks are the last two 2-blocks in $\varphi_2^{-1}(\varphi_2(B_j))$.
					Either $F'$ contains $\psi_2(B_{\ell_1})$ or contains another 2-block preceding $\varphi_2^{-1}(\varphi_2(B_j))$
						and thus $\nu'(F') \geq 2$; by symmetric argument, $\nu'(F'') \geq 2$.
					Let $\cH$ contain these frames and note that $\sum_{F \in \cH} [\nu'(F)-1] \geq t$.
					Also, add the frame $F_{i}$ whose last block is $\varphi_2(B_j)$ to $\cH$.
					If this frame is already included in $\cH$, then the charge contributed by $\varphi_2(B_j)$ was not counted in the previous bound and 
						 $\sum_{F \in \cH} [\nu'(F)-1] \geq t+1$.
					Otherwise, $F_i$ does not contain two 2-blocks from $\varphi_2^{-1}(\varphi_2(B_j))$
						and so $F_i$ spans fewer than $3t - 8$ elements preceding $\varphi_2(B_j)$.
					Thus, $F_i$ contains at least two 2-blocks which are separated either by 
						only 3-blocks (where $F_i$ pulls a charge in Stage 2) or by a block of size at least four (which contributes at least an additional charge to $F_i$)
						and so $\nu'(F_i) \geq 2$ and  $\sum_{F \in \cH} [\nu'(F)-1] \geq t+1$.
			\end{subcases}
		
		\case{$\cP^{(k)}$ is an imperfect pull set.} There is a block $B_\ell \in \cP^{(k)}$ so that $|B_{\ell}| \geq 2|\varphi_2^{-1}(B_\ell)| + 4$.
			Since $B_\ell$ contributes at least one to the defect of every pull set that contains $B_\ell$, 
				every frame containing $B_\ell$ has $\nu'$-charge at least two.
			Let $F_{j_k}$ be the frame that starts at the last block of $\cP^{(k)}$ and note that $F_{j_k}$ contains at least two 2-blocks.
			Therefore, $F_{j_k}$ either contains a pull set and two 2-blocks separated by only 3-blocks, two disjoint maximal pull sets,
				or a pull set and a 4-block and in any case has $\nu'$-charge at least two.
			If $F_{j_k}$ contains $B_\ell$, then one of the pull sets in $F_{j_k}$ is imperfect and $\nu'(F_{j_k}) \geq 3$.
			Therefore, let $\cH$ contain $F_{j_k}$ and the frames containing $B_\ell$, and $\cH$ satisfies the claim.		
	\end{mycases}
\end{proof}

\begin{subclaim}\label{claim:Bk2Big}
	Let $B_i$ be a 5-block with $x_{k_2} - 9t \leq x_i \leq x_{k_2}$.
	If every pull set $\cP$ containing $B_i$ has  $|\varphi_2^{-1}(\cP)| = |\varphi_2^{-1}(B_i)| = 1$, 
		then there is a set $\cH$ of frames with $\sum_{F_j \in \cH} (\nu'(F_j)-1) \geq t + 1$.
\end{subclaim}

\begin{proof}[Proof of Claim \ref{claim:Bk2Big}]
	Let $B_j = \psi_2(\varphi_2^{-1}(B_i))$.
	If there is a pull set $\cP$ containing $B_j$ where $|\varphi_2^{-1}(\cP)| \geq 2$, then Claim \ref{claim:PullSetWithTwos} applies to $\cP$
		and we can set $\cH$ to be the $t + 1$ frames with $\nu'$-charge at least two.
	Therefore, we assume no such pull set exists.	
	This implies $|\varphi_2^{-1}(B_j)| \in \{0,1\}$.
	
	We shall construct two disjoint sets $\cH_1$ and $\cH_2$ so that $\sum_{F \in \cH_1} [ \nu'(F_j) - 1] \geq t$ and $\sum_{F \in \cH_2} [\nu'(F) - 1] \geq 1$
		so $\cH = \cH_1 \cup \cH_2$ satisfies $\sum_{F_j \in \cH} (\nu'(F_j)-1) \geq t + 1$.
	To guarantee disjointness, there are blocks that must be contained in frames of $\cH_2$ that cannot be contained in frames of $\cH_1$.
	For instance, a frame in $\cH_2$ may contain $B_j$, but no frames in $\cH_1$ may contain $B_j$.
	
	If $\varphi_2^{-1}(B_j) = \emptyset$ or if $|B_j| \geq 6$, then $B_j$ contributes one to the defect of every pull set containing $B_j$ 
		and hence every frame containing $B_j$ has charge at least two.
	Place all of these frames in $\cH_2$ and $\sum_{F \in \cH_2} \left[\nu'(F)-1\right] \geq t$.
	
	Therefore, we may assume that $|\varphi_2^{-1}(B_j)| = 1$ and $|B_j| = 5$.
	Hence, there are exactly $3t-4$ elements between $\varphi_2^{-1}(B_j)$ and $B_j$.
	Similarly, there are exactly $3t-4$ elements between $B_j$ and $\psi_2^{-1}(B_j)$.
	In either of these regions, not all blocks may be 3-blocks.
	Let $B_{g_1}$ be the last non-3-block preceding $B_j$ and $B_{g_2}$ be the first non-3-block following $B_j$.
	We shall guarantee that all frames in $\cH_2$ contain at least one of $B_j$, $B_{g_1}$, or $B_{g_2}$.
 
	There are exactly $3t - 4$ elements between $\varphi_2^{-1}(B_i)$ and $B_i$.
	Since $3t - 4 \equiv 2 \pmod 3$, this range contains at least 
		one 2-block, two 4-blocks,
		or one block of order at least five.
	Let $B_{\ell_1}$ be the first non-3-block following $\varphi_2^{-1}(B_i)$ and $B_{\ell_2}$ be the first non-3-block preceding $B_i$.
	
	Figure \ref{fig:BuildingBk2BigCases} demonstrates the arrangement of the blocks $B_i, B_j, B_{g_1}, B_{g_2}, B_{\ell_1}$, and $B_{\ell_2}$,
		as well as two blocks $B_{h_1}$ and $B_{h_2}$ which will be selected later in a certain case based on the sizes of $B_{g_1}$ and $B_{g_2}$.
	
\begin{casefig}
			\scalebox{\casefigratio}{\begin{lpic}[]{"figs-unique/Claim14-2"(,22.5mm)}
			   \lbl[t]{130,12;\small $B_i$}
			   \lbl[t]{45,12; \small $B_j$}
			   \lbl[t]{95.5,12; \small $B_{\ell_1}$}
			   \lbl[t]{114,12; \small $B_{\ell_2}$}
			   \lbl[t]{26,12; \small $B_{g_1}$}
			   \lbl[t]{64.5,12; \small $B_{g_2}$}
			   \lbl[t]{14.5,12; \small $B_{h_1}$}
			   \lbl[t]{74,12; \small $B_{h_2}$}
			   \lbl[t]{87,4;\footnotesize $\varphi_2^{-1}(B_i)$}  
			   \lbl[t]{4,4;\footnotesize $\varphi_2^{-1}(B_j)$}  
			\end{lpic}}
	\caption{\label{fig:BuildingBk2BigCases}The blocks involved in the proof of Claim \ref{claim:Bk2Big}.} 
\end{casefig}
	
	We consider cases depending on $|B_{\ell_1}|$ and $|B_{\ell_2}|$ and either find a contradiction 
		or find at least one frame $F$ to place in $\cH_1$ so that $F$ does not contain $B_j$ or $B_{g_2}$ and $[\nu'(F)-1] \geq 1$.
	
	\begin{mycases}
		\case{$|B_{\ell_1}| = 2$.} The block $\varphi_2(B_{\ell_1})$ follows $B_{i}$.
			If all blocks between $B_i$ and $\varphi_2(B_{\ell_1})$ are 3-blocks, then $B_i$ and $\varphi_2(B_{\ell_1})$
				are contained in a common pull set $\cP$ with $|\varphi_2^{-1}(\cP)| \geq 2$, which we assumed does not happen.
			Therefore, there is a block $B_k$ between $B_i$ and $\varphi_2(B_{\ell_1})$ that is not a 3-block.
			If $B_k$ is a 2-block, then $\psi_2(B_k)$ would be a large block between $\varphi_2^{-1}(B_i)$ and $B_{\ell_1}$), a contradiction.
			If $B_k$ is a 4-block, then $\psi_4(B_k)$ would be a large block between $\varphi_2^{-1}(B_i)$ and $B_{\ell_1}$), another contradiction.
			Therefore, $|B_k| \geq 5$, but $\varphi_2^{-1}(B_k) = \emptyset$, since otherwise a 2-block from $\varphi_2^{-1}(B_k)$ would be strictly between
				$\varphi_2^{-1}(B_i)$ and $B_{\ell_1}$.
			Then, every frame containing $B_k$ has $\nu'$-charge at least two.
			The frame $F_k$ does not contain $B_j$, $B_{g_1}$, or $B_{g_2}$, so place $F_k$ in $\cH_1$.
								
		\case{$|B_{\ell_2}| \geq 5$.} If $\varphi_2^{-1}(B_{\ell_2}) \neq \emptyset$, 
				$B_{\ell_2}$ and $B_i$ are in a common pull set $\cP$ with $|\varphi_2^{-1}(\cP)| \geq 2$, but we assumed this did not happen.
			Therefore, $\varphi_2^{-1}(B_{\ell_2}) = \emptyset$ and every frame containing $B_{\ell_2}$ has $\nu'$-charge at least two.
			The frame $F_{\ell_2}$ does not contain $B_j$, $B_{g_1}$, or $B_{g_2}$, so place $F_{\ell_2}$ in $\cH_1$.
		
		\case{$|B_{\ell_1}| \geq 5$.} Since $B_{\ell_1}$ and $B_{i}$ cannot be in a pull set, there is a non-3-block between $B_{\ell_1}$ and $B_i$, 
				so $B_{\ell_1} \neq B_{\ell_2}$.
			
			\begin{subcases}	
			\subcase{$|B_{\ell_2}| = 2$.} The frame $F$ starting at $\psi_2(B_{\ell_2})$ also contains $B_{\ell_1}$ but does not contain $B_j$ or $B_{g_2}$.
			Since $\varphi_2^{-1}(B_i)$ is between $\psi_2(B_{\ell_2})$ and $B_{\ell_1}$, these blocks are in different pull sets and 
				so $\nu'(F) \geq 2$. Place $F$ in $\cH_1$.
				
			\subcase{$|B_{\ell_2}| = 4$.} The frame $F$ starting at $B_{\ell_1}$ also contains $B_{\ell_2}$ but not $B_j$ or $B_{g_2}$.
			 Since $F$ contains two 4-blocks, $\nu'(F) \geq 2$. Place $F$ in $\cH_1$.
			\end{subcases}
			
		\case{$|B_{\ell_1}| = 4$.} Since $3t - 4 \not\equiv 4 \pmod 3$, $B_{\ell_1}$ cannot be the only non-3-block between $\varphi_2^{-1}(B_i)$ and $B_i$,
			so $B_{\ell_1} \neq B_{\ell_2}$.
			Consider $F_{\ell_1}$, the frame starting at $B_{\ell_1}$.
			
			If $F_{\ell_1}$ does not contain two 2-blocks, $\sigma(F_{\ell_1}) \geq 3t - 4$ and $F_{\ell_1}$ contains $B_i$ (and $B_{\ell_2}$).
			If $|B_{\ell_2}| = 2$, then since $4 + 2 \not\equiv 3t-4 \pmod 3$ there is another block $B_k$ between $B_{\ell_1}$ and $B_i$ that 				is not a 3-block.
			Since $F_{\ell_1}$ does not contain two 2-blocks, $|B_k| \geq 4$ and therefore $\nu'(F_{\ell_1}) \geq 2$. 
			Place $F_{\ell_1}$ in $\cH_1$ and note that $F_{\ell_1}$ does not contain $B_j$, $B_{g_1}$, or $B_{g_2}$.
			
			If $F_{\ell_1}$ does contain two 2-blocks, then either those two 2-blocks pull an extra charge in Stage 2, or
				they are separated by a block of size at least four.
			In either case, $\nu'(F_{\ell_1})\geq 2$ so place $F_{\ell_1}$ in $\cH_1$.
	\end{mycases}
		
	We now turn our attention to placing frames in $\cH_2$ based on the sizes of $B_{g_1}$ and $B_{g_2}$.
	Note that $\varphi_2^{-1}(B_{g_1}) = \varphi_2^{-1}(B_{g_2}) = \emptyset$, or else Claim \ref{claim:PullSetWithTwos} applies.
	If $|B_{g_1}| \geq 5$, then every frame containing $B_{g_1}$ has $\nu'$-charge at least two, so add these $t$ frames to $\cH_2$
		to result in $\sum_{F \in \cH}[\nu'(F)-1]\geq t+1$.
	Similarly, if $|B_{g_2}| \geq 5$, then every frame containing $B_{g_2}$ has $\nu'$-charge at least two, add these frames to $\cH_2$.
	Therefore, we may assume that $|B_{g_1}|, |B_{g_2}| \in \{2, 4\}$ which provides four cases.
	
	\begin{mycases}
		\case{$|B_{g_1}| = |B_{g_2}| = 2$.}
			There are at most $3t - 4$ elements between $B_{g_1}$ and $\varphi_2(B_{g_1})$
				or between $\psi_2(B_{g_2})$ and $B_{g_2}$.
			Let $B_{h_1}$ be the last non-3-block preceding $B_{g_1}$ and $B_{h_2}$ be the first non-3-block following $B_{g_2}$.
			If $B_{h_1}$ is a 2-block,
				let $\cP_1 = \{ \varphi_2(B_{h_1}), \varphi_2(B_{g_1})\}$.
			There cannot be a 4-block $B_k$ or 2-block $B_{k'}$ between $\varphi_2(B_{h_1})$ and $\varphi_2(B_{g_1})$
				or else $\psi_4(B_k)$ or $\psi_2(B_{k'}$ would be between $B_{h_1}$ and $B_{g_1}$.
			Therefore, adding any non-3-block between $\varphi_2(B_{h_1})$ and $\varphi_2(B_{g_1})$ to $\cP_1$
				makes $\cP_1$ be a pull set where $|\varphi_2^{-1}(\cP_1)| \geq 2$ and by Claim \ref{claim:PullSetWithTwos} we are done.
			Similarly if  $B_{\ell_2}$, the first non-3-block following $B_{g_2}$, is a 2-block, then
				let $\cP_2 = \{ \varphi_2(B_{\ell_2}), \varphi_2(B_{g_2})\}$ and we can expand $\cP_2$ to a pull set
				where $|\varphi_2^{-1}(\cP_2)| \geq 2$ and by Claim \ref{claim:PullSetWithTwos} we are done.
			Since we assumed this is not the case, $B_{h_1}$ and $B_{h_2}$ have
				size at least four.
			Either $\psi_2(B_{g_2}) = B_{h_1}$ or $B_{h_1}$ follows $\psi_2(B_{g_2})$.
			Either $\varphi_2(B_{g_1}) = B_{h_2}$ or $B_{h_2}$ precedes $\psi_2(B_{g_2})$.
			Thus, every frame containing $B_j$ also contains $B_{h_1}$ or $B_{h_2}$
				and thus contains at least a pull set and a 4-block or two maximal pull sets which implies the frame has $\nu'$-charge at least two.
			Place these frames in $\cH_2$.
			
		\case{$|B_{g_1}| = |B_{g_2}| = 4$.}
			There are at most $3t - 3$ elements between  $B_{g_1}$ and $\varphi_4(B_{g_1})$
				or between $\psi_4(B_{g_2})$ and $B_{g_2}$.
			Since $B_{g_1}$ is the last non-3-block preceding $B_j$,
				either $\psi_4(B_{g_2}) = B_{g_1}$ or $\psi_4(B_{g_2})$ precedes $B_{g_1}$.
			Similarly, either $\varphi_4(B_{g_1}) = B_{g_2}$ or $\varphi_4(B_{g_1})$ follows $B_{g_1}$.
			Therefore, every frame containing $B_j$ also contains $B_{g_1}$ or $B_{g_2}$
				and thus contains a pull set and a 4-block which implies the  frame has $\nu'$-charge at least two.
			Place these frames in $\cH_2$.
		
		\case{$|B_{g_1}| = 2$ and $|B_{g_2}| = 4$.}\label{case:Bk2BigG2G2EQ4}
			There are at most $3t - 4$ elements between $B_{g_1}$ and $\varphi_2(B_{g_1})$
				and at most $3t - 3$ elements between $\psi_4(B_{g_2})$ and $B_{g_2}$.
			Let $B_{h_1}$ be the last non-3-block preceding $B_{g_1}$.
			If $B_{h_1}$ a 2-block,
				then there is a pull set $\cP_1 = \{ \varphi_2(B_{h_1}), \varphi_2(B_{g_1})\}$ 
				where $|\varphi_2^{-1}(\cP_1)| \geq 2$.
			We assumed this is not the case, so $|B_{h_1}| \geq 4$.
			Either $B_{h_1} = \psi_4(B_{g_2})$ or $B_{h_1}$ follows $\psi_4(B_{g_2})$.
			Therefore, every frame containing $B_j$ also contains $B_{h_1}$ or $B_{g_2}$
				and thus contains a pull set and a 4-block or two maximal pull sets
				which implies the  frame has $\nu'$-charge at least two.
			Place these frames in $\cH_2$.

		\case{$|B_{g_1}| = 4$ and $|B_{g_2}| = 2$.} This case is symmetric to Case \ref{case:Bk2BigG2G2EQ4}.
	\end{mycases}
	
	Thus, $\cH = \cH_1 \cup \cH_2$ has been selected from $\cH_1$ and $\cH_2$
		so that $\sum_{F \in \cH} [\nu'(F)-1] \geq t + 1$.
\end{proof}

\begin{subclaim}\label{claim:Bk2EQ4}
	 If there is a block $B_\ell$ with $|B_{\ell}| = 4$, $x_{k_2} - 12t \leq x_\ell \leq x_{k_2}$, and there is a block $B_i$ 
	 		between $\psi_4(B_{\ell})$ and $B_{\ell}$ with $|B_i| \neq 3$, then 
			there is a set $\cH$ of frames so that $\displaystyle\sum_{F \in \cH} \left[\nu'(F) - 1\right] \geq t + 1$.
\end{subclaim}

\begin{proof}[Proof of Claim \ref{claim:Bk2EQ4}]
	Note that it may be the case that $B_{\ell} = B_{k_2}$.
	For the remainder of the proof, $B_\ell$ will not be used to bound the $\nu'$-charge of frames in $\cH$ 
		and all other blocks will contain elements between $x_\ell - 12t$ and $x_\ell$, 
		so these blocks will not be one of $B_0$, $B_{k_1}$, or $B_{k_2}$.

	Let $\psi_4^{(d)}$ denote the $d$th composition of the map $\psi_4$.
	Let $D\geq 1$ be the first integer so that $|\psi_4^{(D)}(B_\ell)|\neq 4$, if it exists.
	We will select blocks $B_{\ell_1}, B_{\ell_2}, B_{\ell_3}$, and $B_{\ell_4}$ based on the value of $D$.
	For all $d \leq D$, let $B_{\ell_d} = \psi_4^{(d)}(B_\ell)$.
	
	If $D < 4$, then we must use different methods to find the remaining blocks $B_{\ell_d}$.
	Note that $|B_{\ell_D}| \geq 5$.
	If $|\varphi_2^{-1}(B_{\ell_D})| \geq 2$, then by Claim \ref{claim:PullSetWithTwos} we are done.
	If $|\varphi_2^{-1}(B_{\ell_D})| = 1$ and $|B_{\ell_D}| = 5$, then 
		either there is a pull set $\cP$ containing $B_{\ell_D}$ with $|\varphi_2^{-1}(\cP)| \geq 2$ and by  Claim \ref{claim:PullSetWithTwos} we are done
		or every pull set $\cP$ containing $B_{\ell_D}$ has $|\varphi_2^{-1}(\cP)| = 1$ and by Claim \ref{claim:Bk2Big} we are done.
	Therefore, there are two remaining cases for $B_{\ell_D}$:
		either (a) $\varphi_2^{-1}(B_{\ell_D}) = \emptyset$,
		or (b) $|\varphi_2^{-1}(B_{\ell_D})| = 1$ and $|B_{\ell_D}| \geq 6$.

	We consider cases based on $|B_i|$.
	
\begin{casefig}
			\scalebox{\casefigratio}{\begin{lpic}[]{"figs-unique/ClaimFourBlockChain"(,22.5mm)}
			   \lbl[t]{73.5,12;\footnotesize $B_{\ell}$}
			   \lbl[t]{25.25,12;\footnotesize $B_{\ell_1}$}
			   \lbl[t]{58,12; \footnotesize $B_{i}$}
			   \lbl[t]{42.25,12;\footnotesize $B_{i_1}$}
			   	\lbl[t]{44,20.5;\footnotesize$\psi_2$}
				\lbl[b]{60,0.75;\footnotesize$\psi_4$}
			\end{lpic}}
	\caption{\label{fig:FourBlockChain}Claim \ref{claim:Bk2EQ4}, Case \ref{case:Bk2EQ4Bi2}: $|B_{\ell}| = 4$ and $|B_i| = 2$, shown with $D \geq 4$.}
\end{casefig}
	\begin{mycases}
		\case{$|B_i| = 2$.}\label{case:Bk2EQ4Bi2}
			Let $B_{i_1} = \psi_2(B_i)$.
			$B_{i_1}$ is a block of size at least five preceding $B_{\ell_1}$.
			If there exists a pull set $\cP$ containing $B_{i_1}$ so that  $|\varphi_2^{-1}(\cP)| \geq 2$, then by Claim \ref{claim:PullSetWithTwos} we are done.
			Therefore, $|\varphi_2^{-1}(B_{i_1})| \in \{0,1\}$.
			
			\begin{subcases}
				\subcase{\label{ssc:Bk2EQ4Bi2nonempty} Suppose $|\varphi_2^{-1}(B_{i_1})| = 1$.} 
				If $|B_{i_1}| = 5$, then by Claim \ref{claim:Bk2Big} we are done.			
				Therefore, $|B_{i_1}| \geq 6$ and $B_{i_1}$ contributes at least one to the defect of every pull set containing $B_{i_1}$,
					so every frame containing $B_{i_1}$ has $\nu'$-charge at least two.
				Place these frames in $\cH$.
				
				There are at most $3t - 4$ elements between $B_{i_1}$ and $B_i$, so if
					does not contain $B_{\ell_1}$, then $F_{i_1}$ contains at least two 2-blocks.
				If these 2-blocks are separated only by 3-blocks, then $\nu'(F_{i_1}) \geq 3$
					because the imperfect pull set containing $B_{i_1}$ 
					contributes two charge and these 2-blocks pull one charge in Stage 2. 
				Otherwise, these 2-blocks are separated by some block of order at least four.
				Therefore, $\nu'(F_{i_1}) \geq 3$ since the imperfect pull set containing $B_{i_1}$ 
					contributes two charge and either the 4-blocks between the 2-blocks contributes one charge
					or the block of size at least five between the 2-blocks is contained in a pull set 
					that contributes at least one charge.
				Thus, if $F_{i_1}$ does not contain $B_{\ell_1}$, we are done.
				We now assume that $B_{\ell_1} \in F_{i_1}$.
				
				If $D \geq 2$, then $|B_{\ell_1}| = 4$.
					Then $\nu'(F_{i_1}) \geq 3$ because the imperfect pull set containing $B_{i_1}$ 
					contributes two charge and $B_{\ell_1}$ contributes one charge.

				If $D = 1$, then $|B_{\ell_1}| \geq 5$.
					If $\varphi_2^{-1}(B_{\ell_1}) = \emptyset$, then
						$B_{\ell_1}$ contributes two charge to $F_{i_1}$ and 
						$\nu'(F_{i_1}) \geq 4$.
					Otherwise $|\varphi_2^{-1}(B_{\ell_1})| = 1$ and $|B_{\ell_1}| \geq 6$, 
						so $B_{\ell_1}$ contributes at least one to the defect of any pull set containing $B_{\ell_1}$
						and thus $\nu'(F_{i_1}) \geq 3$.
			
				Since $\cH$ contains $t$ frames of $\nu'$-charge at least two and at least one frame ($F_{i_1}$)
					with $\nu'$-charge at least three, 
					$\displaystyle\sum_{F \in \cH} \left[\nu'(F) - 1\right] \geq t + 1$.
			
				\subcase{Suppose $|\varphi_2^{-1}(B_{i_1})| = 0$.}
				$B_{i_1}$ contributes at least two to the $\nu'$-charge for every frame containing $B_{i_1}$.
				Place these $t$ frames in $\cH$.
				As in Case \ref{ssc:Bk2EQ4Bi2nonempty}, the frame $F_{i_1}$ must have charge $\nu'(F_{i_1}) \geq 3$ and
					$\displaystyle\sum_{F \in \cH} \left[\nu'(F) - 1\right] \geq t + 1$.
					
		\end{subcases}

\begin{casefig}
			\scalebox{\casefigratio}{\begin{lpic}[]{"figs-unique/ClaimFourBlockChainC"(,22.5mm)}
			   \lbl[t]{41,12;\footnotesize $B_{\ell}$}
			   \lbl[t]{8.5,12;\footnotesize $B_{\ell_1}$}
			   \lbl[t]{23.5,12; \footnotesize $B_{i}$}
				\lbl[b]{27,0.75;\footnotesize$\psi_4$}
			\end{lpic}}
	\caption{\label{fig:FourBlockChain}Claim \ref{claim:Bk2EQ4}, Case \ref{case:Bk2EQ4Bi5}: $|B_{\ell}| = 4$ and $|B_i| = 2$, shown with $D \geq 4$.}
\end{casefig}
		\case{$|B_i| \geq 5$.}\label{case:Bk2EQ4Bi5}
			Let $\cH$ be the frames containing $B_i$.
			If there exists a pull set $\cP$ containing $B_i$ with $|\varphi_2^{-1}(\cP)| \geq 2$, then by Claim \ref{claim:PullSetWithTwos}, we are done.
			If $|B_i| = 5$ and $|\varphi_2^{-1}(B_i)| = 1$, then by Claim \ref{claim:Bk2Big}, we are done.
			Therefore, either $\varphi_2^{-1}(B_i) = \emptyset$ and $|B_i| \geq 5$, or
				$|\varphi_2^{-1}(B_i)| = 1$ and $|B_i| \geq 6$.
			In either case, $B_i$ contributes at least two charge to every frame in $\cH$.
			
			Consider the frame $F_{i-t+1} \in \cH$ where $B_i$ is the last block of $F_{i-t+1}$.
			
			If $F_{i-t+1}$ has fewer than two 2-blocks, then $\sigma(F_{i-t+1}) \geq 2 + 3(t-2) + |B_i| \geq 3t+1$.
			Since there are at most $3t - 3$ elements between $B_{\ell_1}$ and $B_\ell$,
				then $B_{\ell_1} \in F_{i-t+1}$ when $F_{i-t+1}$ has fewer than two 2-blocks.
			If $|B_{\ell_1}| = 4$, then $B_{\ell_1}$ contributes another charge to $F_{i-t+1}$ and $\nu'(F_{i-t+1}) \geq 3$.
			If $|B_{\ell_1}| \geq 5$ and $\varphi_2^{-1}(B_{\ell_1}) = \emptyset$ and $B_{\ell_1}$ contributes at least two charge to $F_{i-t+1}$ and $\nu'(F_{i-t+1}) \geq 4$.
			Otherwise, $|B_{\ell_1}| \geq 5$ and $\varphi_2^{-1}(B_{\ell_1}) \neq \emptyset$.
			Since $B_i$ is not contained within any pull set $\cP$ with $|\varphi_2^{-1}(\cP)| \geq 2$,
				then either $\varphi_2^{-1}(B_i) = \emptyset$ or $B_i$ and $B_{\ell_1}$ are not contained in a common pull set.
			In either case, $B_{\ell_1}$ contributes at least one more charge to $F_{i-t+1}$ and $\nu'(F_{i-t+1}) \geq 3$.
			
			If $F_{i-t+1}$ has two or more 2-blocks, then either two 2-blocks are separated only by 3-blocks and contribute an extra charge to $F_{i-t+1}$
				or they are separated by a block of size at least four which is not in a pull set with $B_i$ and contributes an extra charge to $F_{i-t+1}$.
						
			Therefore, $\nu'(F_{i-t+1}) \geq 3$ and $\sum_{F \in \cH} \left[\nu'(F)-1\right] \geq t + 1$.

			\begin{casefig}
			\scalebox{\casefigratio}{\begin{lpic}[]{"figs-unique/ClaimFourBlockChainB"(,22.5mm)}
			   \lbl[t]{137,12;\footnotesize $B_{\ell}$}
			   \lbl[t]{105,12;\footnotesize $B_{\ell_1}$}
			   \lbl[t]{70.5,12;\footnotesize $B_{\ell_2}$}
			   \lbl[t]{36.5,12;\footnotesize $B_{\ell_3}$}
			   \lbl[t]{88.5,12;\footnotesize $B_{i_1}$}
			   \lbl[t]{55,12;\footnotesize $B_{i_2}$}
			   \lbl[t]{22.75,12;\footnotesize $B_{i_3}$}
			   \lbl[t]{5,12;\footnotesize $B_{\ell_4}$}
			   \lbl[t]{120.75,12;\footnotesize $B_{i}$}
				\lbl[t]{42,20.5;\footnotesize$\psi_4$}
				\lbl[t]{73,20.5;\footnotesize$\psi_4$}
				\lbl[t]{108,20.5;\footnotesize$\psi_4$}
				\lbl[b]{124.5,0.75;\footnotesize$\psi_4$}
				\lbl[b]{90,0.75;\footnotesize$\psi_4$}
				\lbl[b]{55,0.75;\footnotesize$\psi_4$}
				\lbl[b]{21,0.75;\footnotesize$\psi_4$}
			\end{lpic}}
	\caption{\label{fig:FourBlockChainB}Claim \ref{claim:Bk2EQ4},  Case \ref{case:Bk2EQ4Bi4}: $|B_{\ell}| = 4$ and $|B_i| = 4$, shown with $D \geq 4, D' \geq 3$.}
\end{casefig}
		\case{$|B_i| = 4$.}\label{case:Bk2EQ4Bi4}
			Let $D'\geq 1$ be the first integer so that $|\psi_4^{(D')}(B_i)| \neq 4$.
			For $d \in \{1,\dots,D'\}$, define $B_{i_d} = \psi_4^{(d)}(B_i)$.
			
			\begin{subcases}
				\subcase{$D \geq 4$ and $D' \geq 3$.}
				Note that for $j \in \{1,2,3\}$, $B_{i_j}$ is between $B_{\ell_{j+1}}$ and $B_{\ell_j}$.
				There are at most $3t - 3$ elements between $B_{\ell_{j+1}}$ and $B_{\ell_j}$, 
					so every frame $F$ containing $B_{i_j}$ either contains one of $B_{\ell_{j+1}}$ or $B_{\ell_j}$
					or has $\sigma(F) \leq 3t - 4$.
				If $F$ contains $B_{i_j}$ and one of $B_{\ell_{j+1}}$ or $B_{\ell_j}$,
					then either $\nu'(F) \geq 2$ or $B_{i_j}$ is contained in a perfect pull set $\cP$ with the other block
					and $|\varphi_2^{-1}(\cP)| \geq 2$ so by Claim \ref{claim:PullSetWithTwos} we are done.
				If $\sigma(F) \leq 3t - 3$, then there are at least three 2-blocks in $F$.
				At least two of these 2-blocks are on a common side of $B_{i_j}$, and either they are separated only by 3-blocks (and pull an extra charge to $F$)
					or they are separated by a block of size at least four (which contributes an extra charge to $F$).
				Therefore, every frame containing $B_{i_j}$ has $\nu'$-charge at least two.
				Build $\cH$ from the frames containing $B_{i_1}$ and the frames containing $B_{i_3}$.
				Then $\sum_{F\in \cH}\left[\nu'(F)-1\right] \geq 2t$.
				
				\subcase{$D' < D < 4$.}
					By definition, $|B_{i_{D'}}| \geq 5$.					
					Let $\cH$ be the set of frames containing $B_{i_{D'}}$.
					
					If there exists a pull set $\cP$ containing $B_{i_{D'}}$ so that $|\varphi_2^{-1}(\cP)| \geq 2$ then by Claim \ref{claim:PullSetWithTwos} we are done.
					If $|\varphi_2^{-1}(B_{i_{D'}})| = 1$ and $|B_{i_{D'}}| = 5$, then by Claim \ref{claim:Bk2Big} we are done.
					Therefore, $B_{i_{D'}}$ contributes at least one to the defect of every pull set containing $B_{i_{D'}}$ 
						and hence every frame containing $B_{i_{D'}}$ has $\nu'$-charge at least two.
					
					The block $B_{i_{D'}}$ is between $B_{\ell_{D'+1}}$ and $B_{\ell_{D'}}$ 
						and there are at most $3t - 3$ elements between $B_{\ell_{D'+1}}$ and $B_{\ell_{D'}}$.
					Consider the frame $F_{i_{D'}}$, which has $B_{i_{D'}}$ as the first block.
					If $F_{i_{D'}}$ contains $B_{\ell_{D'}}$, then $\nu'(F_{i_{D'}}) \geq 3$ since $B_{\ell_{D'}}$ is a 4-block 
						and $B_{i_{D'}}$ contributed two charge to $F_{i_{D'}}$.
					Otherwise, $\sigma(F_{i_{D'}}) \leq 3t - 3$ and $F_{i_{D'}}$ contains at least two 2-blocks.
					Either these 2-blocks are separated by 3-blocks and pull a charge in Stage 2,
						or there is a block of size at least four between these blocks 
						and contributes at least one more charge to $F_{i_{D'}}$.
					Therefore, $\nu'(F_{i_{D'}}) \geq 3$ and $\sum_{F \in \cH}\left[\nu'(F)-1\right] \geq t + 1$.

				\subcase{$D \leq D' < 4$.}
					By definition, $|B_{\ell_D}| \geq 5$.	
					Let $\cH$ be the set of frames containing $B_{\ell_D}$.
					
					If there exists a pull set $\cP$ containing $B_{\ell_D}$ so that $|\varphi_2^{-1}(\cP)| \geq 2$ then by Claim \ref{claim:PullSetWithTwos} we are done.
					If $|\varphi_2^{-1}(B_{\ell_D})| = 1$ and $|B_{\ell_D}| = 5$, then by Claim \ref{claim:Bk2Big} we are done.
					Therefore, $B_{\ell_D}$ contributes at least one to the defect of every pull set containing $B_{\ell_D}$ 
						and hence every frame containing $B_{\ell_D}$ has $\nu'$-charge at least two.
					
					The block $B_{\ell_D}$ is between $B_{i_D}$ and $B_{i_{D-1}}$ and there are at most $3t - 3$ elements between $B_{i_D}$ and $B_{i_{D-1}}$.
					Consider the frame $F_{\ell_D}$, which has $B_{\ell_D}$ as the first block.
					If $F_{\ell_D}$ contains $B_{i_{D-1}}$, then $\nu'(F_{\ell_D}) \geq 3$ since $B_{i_{D-1}}$ is a 4-block 
						and $B_{\ell_D}$ contributed two charge.
					Otherwise, $\sigma(F_{\ell_D}) \leq 3t - 3$ and $F_{\ell_D}$ contains at least two 2-blocks.
					Either these 2-blocks are separated by 3-blocks and pull a charge in Stage 2,
						or there is a block of size at least four between these blocks 
						and contributes at least one more charge to $F_{\ell_D}$.
					Therefore, $\nu'(F_{\ell_D}) \geq 3$ and $\sum_{F \in \cH}\left[\nu'(F)-1\right] \geq t + 1$.\qedhere
			\end{subcases}
	\end{mycases}
\end{proof}

	Since $\sum_{j=1}^r \nu'(F_j) = r - 1$, there is some frame $F_{z}$ with $\nu'(F_{z}) = 0$.
	Also, the only frames where $\nu'(F_j)$ may be zero are those containing $B_0$, $B_{k_1}$, or $B_{k_2}$.
	
\begin{subclaim}\label{claim:bstar}
	There exists a block $B_*$ and a frame $F_z$ so that $B_* \in F_z$,
		$\nu'(F_z) = 0$,
		and for all 2-blocks $B_j$, $B_*$ does not appear between $\psi_2(B_j)$ and $\varphi_2(B_j)$, inclusive.
\end{subclaim}

\begin{proof}[Proof of Claim \ref{claim:bstar}]
	Using any frame $F_{z}$ with $\nu'(F_{z}) = 0$, 
		we will show that there is a block $B_* \in \{ B_0, B_{k_1}, B_{k_2} \} \cap F_z$ 
		so that for all 2-blocks $B_j$,
		$B_*$ does not appear between $\psi_2(B_j)$ and $\varphi_2(B_j)$.

Consider five cases based on which blocks ($B_0$, $B_{k_1},$ or $B_{k_2}$) are within $F_z$ and
	if there are other frames with zero charge.
		
\begin{mycases}
	\case{For some $i \in \{1,2\}$, $B_{k_i} \in F_{z}$ and $|B_{k_i}| = 4$.}\label{case:Bk1Small}
	Since $\nu'(F_z) = 0$, we must have that either $\nu^*(F_z) = 0$ or $\nu^*(F_z) > 0$ and charge was pulled from $F_z$ in Stage 2.

If $\nu^*(F_z) = 0$, then $F_z$ contains no block of size at least four other than $B_{k_i}$.
If there are no 2-blocks, then every block of $F_z \setminus \{B_{k_i}\}$ is a 3-block and $\sigma(F_z) = 3t + 1$. 
All 2-blocks $B_j$ have at most $3t-4$ elements between $B_j$ and $\varphi_2(B_j)$ or between $\psi_2(B_j)$ and $B_j$, so there are not enough elements to fit $F_z$ in these ranges and hence $B_* = B_{k_i}$ suffices.

If there is exactly one 2-block in $F_z$, then $\sigma(F_z) = 3t$, a contradiction.
Similarly, if there are exactly two 2-blocks in $F_z$, then $\sigma(F_z) = 3t-1$, a contradiction.
Hence, there are at least three 2-blocks in $F_z$ and some pair of 2-blocks is separated by only 3-blocks,
	so Stage 2 pulled at least one charge from another frame, contradicting $\nu'(F_z) = 0$.
	
If $\nu^*(F_z) > 0$, then there must be at least one block of order four or more other than $B_{k_i}$.
If any of these blocks are 4-blocks, then the positive charge contributed cannot be removed by Stage 2.
If any of these blocks have size at least five, the associated maximal pull set in $F_z$ does not contain $B_{k_1}$ or $B_{k_2}$ so
	the defect is non-negative and Stage 2 leaves at least one charge, so $\nu'(F_z) > 0$.
	
	\case{$B_{k_1} \in F_{z}$ and $|B_{k_1}| \geq 5$.}\label{case:Bk1Big}
	Since $x_0 + 3t \in B_{k_1}$ and $B_0$ is not included in $\varphi_2^{-1}(B_{k_1})$,
		we have $|B_{k_1}| \geq 2|\varphi_2^{-1}(B_{k_1})| + 4$.
	Thus the maximal pull set in $F_z$ containing $B_{k_1}$ is imperfect and $\nu'(F_z) > 0$, a contradiction.

	\case{$B_0 \in F_z$, there are no 2-blocks in $F_z$, and $F_z$ does not contain $B_{k_1}$ or $B_{k_2}$.}
		\label{case:B0allthreeblocks}
		Since $\nu'(F_z) = 0$, there is no block in $F_z$ with size at least four,
			hence $F_z$ contains $t-1$ 3-blocks and $B_0$, so $\sigma(F_z) = 3t - 2$.
		For a 2-block $B_j$, there are at most $3t-4$ elements contained in the blocks strictly 
			between $B_j$ and $\varphi_2(B_j)$ 
			or the blocks strictly between $B_j$ and $\psi_2(B_j)$.
		Then, if $B_0$ appears between $\psi_2(B_j)$ and $\varphi_2(B_j)$, then 
			one of $\psi_2(B_j)$, $B_j$, or $\varphi_2(B_j)$ must be within $F_z$, a contradiction.
		Thus, $B_* = B_j$ suffices.
	
	\case{$B_0 \in F_z$, $F_z$ contains at least one 2-block, $F_z$ does not contain $B_{k_1}$ or $B_{k_2}$.}\label{case:B0withTwoBlock}
	Since $F_z$ does not contain $B_{k_1}$ or $B_{k_2}$, any block of size at least four implies $\nu'(F_z) \geq 1$, a contradiction.
	Further, if there are at least three 2-blocks in $F_z$, then two 2-blocks are separated by only 3-blocks and $F_z$ pulls a charge in Stage 2, a contradiction.
	Therefore, $F_z$ contains either one or two 2-blocks.
	If there are two 2-blocks, there must be one 2-block (call it $B_{i_1}$) preceding $B_0$ and another (call it $B_{i_2}$)  following $B_0$.
	In either case, $\sigma(F_z) \in \{3t - 4, 3t - 3\}$.
	
	Let $B_{\ell_1}$ be the block immediately following $F_z$ and
		$B_{\ell_2}$ be the block immediately preceding $F_z$.
	If $\sigma(F_z) = 3t - 3$ and $B_{\ell_j}$ has size two or three (for some $j \in \{1,2\}$), 
		then $\sigma(F_z \cup \{B_{\ell_j}\}) \in \{3t-1,3t\}$, a contradiction.
	If $\sigma(F_z) = 3t - 4$ and $|B_{\ell_j}| \in \{3, 4\}$ (for some $j \in \{1,2\}$),
		then $\sigma(F_z \cup \{B_{\ell_i}\}) \in \{3t - 1,3t\}$, a contradiction. 
	Hence, $|B_{\ell_1}|, |B_{\ell_2}| \geq 4$ when exactly one 2-block exists, 
		or $|B_{\ell_j}| = 2$ and the 2-block $B_{i_j}$ is between $B_0$ and $B_{\ell_j}$ (and every frame containing both $B_{i_j}$ and $B_{\ell_j}$ pulls a charge in Stage 2).
	Since all other frames containing $B_0$ contain either $B_{\ell_1}$ or $B_{\ell_2}$, they have positive $\nu'$-charge.
	Therefore, $F_z$ is the \emph{only} frame with zero charge and $\displaystyle\sum_{j: \nu'(F_j) > 0} [\nu'(F_j) - 1] = 0$.
	Hence, if there exists any frame with $\nu'$-charge at least two, we have a contradiction.
	
	We consider if $B_{i_1}$ and $B_{i_2}$ both exist and whether or not $\psi_2(B_{i_j})$ is equal to $B_{k_2}$ for some $j$.
	
	\begin{subcases}
		\subcase{$\psi_2(B_{i_j}) = B_{k_2}$ for some $j \in \{1,2\}$.}
			Since $|B_{k_2}| \geq 2|\psi_2^{-1}(B_{k_2})| + 4$, $|B_{k_2}| \geq 6$.
			If $\varphi_2^{-1}(B_{k_2}) = \emptyset$, then $\mu^*(B_{k_2}) \geq 2$ and
				every frame containing $B_{k_2}$ has $\nu'$-charge at least two, a contradiction.
			If $|\varphi_2^{-1}(B_{k_2})| \geq 2$, Claim \ref{claim:PullSetWithTwos} implies 
				$\displaystyle\sum_{j : \nu'(F_j)>0} [\nu'(F_j) - 1] \geq t+1$, a contradiction.
			Thus, $|\varphi_2^{-1}(B_{k_2})| = 1$.
			Let $B_g$ be the unique 2-block in $\varphi_2^{-1}(B_{k_2})$.
			Note that $|\psi_2(B_g)| \geq 5$.
			If $|\psi_2(B_g)| \geq 2|\varphi_2^{-1}(\psi_2(B_g))| + 4$, then
				$\psi_2(B_g)$ contributes one to the defect of every pull set containing $\psi_2(B_g)$ 
				and every frame containing $\psi_2(B_g)$ has $\nu'$-charge at least two, a contradiction.
			Thus, $|\psi_2(B_g)| = 2|\varphi_2^{-1}(\psi_2(B_g))| + 3 \geq 5$
				and	every pull set $\cP$ which contains $\psi_2(B_g)$ has $|\varphi_2^{-1}(\cP)| \geq 1$.
			If any such pull set has $|\varphi_2^{-1}(\cP)| \geq 2$, then Claim \ref{claim:PullSetWithTwos} implies
					$\sum_{j : \nu'(F_j) > 0} [\nu'(F_j) - 1] \geq t+1$.
			Otherwise, every pull set containing $\psi_2(B_g)$ has $|\varphi_2^{-1}(\cP)| = 1$
					and Claim \ref{claim:Bk2Big} implies $\sum_{j : \nu'(F_j) > 0} [\nu'(F_j) - 1] \geq t+1$.
		
		\subcase{$\psi_2(B_{i_j}) \neq B_{k_2}$ for both $j \in \{1,2\}$.}
			Consider some $j \in \{1,2\}$ so that $B_{i_j}$ exists.
			If $|\varphi_2^{-1}(\psi_2(B_{i_j}))| \geq 2$, then Claim \ref{claim:PullSetWithTwos} provides a contradiction.
			If $|\psi_2(B_{i_j})| \geq 2|\varphi_2^{-1}(\psi_2(B_{i_j}))| + 4$, then $\psi_2(B_{i_j})$ contributes
				at least one to the defect of any pull set containing $\psi_2(B_{i_j})$,
				and every frame containing $\psi_2(B_{i_j})$ has $\nu'$-charge at least two, a contradiction.
			Therefore, the size of $\varphi_2^{-1}(\psi_2(B_{i_j}))$ is 1 and $|\psi_2(B_{i_j})| = 5$.

			Every pull set $\cP$ which contains $\psi_2(B_{i_j})$ has $|\varphi_2^{-1}(\cP)| \geq 1$.
			If any such pull set has $|\varphi_2^{-1}(\cP)| \geq 2$, then Claim \ref{claim:PullSetWithTwos} provides a contradiction.
			Otherwise, every pull set containing $\psi_2(B_{i_j})$ has $|\varphi_2^{-1}(\cP)| = 1$
					and Claim \ref{claim:Bk2Big} provides a contradiction.
	\end{subcases}

	\case{$B_{k_2} \in F_{z}$ and $|B_{k_2}| \geq 5$.}
	If $|B_{k_2}| \geq 2|\varphi_2^{-1}(B_{k_2})| + 4$,
		then every pull set containing $B_{k_2}$ is imperfect and contributes at least one charge to every frame containing $B_{k_2}$, including $F_z$, a contradiction.
	Hence, $|B_{k_2}| = 2|\varphi_2^{-1}(B_{k_2})| + 3$.
	Since we are not in Case \ref{case:Bk1Small} or Case \ref{case:Bk1Big},
		every frame with $\nu'$-charge zero must contain $B_{k_2}$ or $B_0$.
	
	Suppose there is a frame $F_{z'}$ containing $B_0$ and not containing $B_{k_2}$
		with $\nu'(F_{z'}) = 0$.
	Since we are not in Case \ref{case:B0allthreeblocks}, $F_{z'}$ contains at least one 2-block
		 and the proof of Case \ref{case:B0withTwoBlock} shows that $F_{z'}$ is the \emph{only} frame 
		 with $\nu'$-charge zero containing $B_0$ and not containing $B_{k_2}$.
	
	Therefore, there are at most $t + 1$ frames with $\nu'$-charge zero, whether or not there is a frame $F_{z'}$ with $\nu'(F_{z'}) = 0$ containing $B_0$ and not $B_{k_2}$
		and hence $\sum_{j : \nu'(F_j) > 0} [\nu'(F_j) - 1] \leq t.$
		
	If $|\varphi_2^{-1}(B_{k_2})| \geq 2$, then Claim \ref{claim:PullSetWithTwos} implies  $\sum_{j : \nu'(F_j)>0} [\nu'(F_j) - 1] \geq t+1$.
	If $|\varphi_2^{-1}(B_{k_2})| = 1$, then Claim \ref{claim:Bk2Big} implies  $\sum_{j : \nu'(F_j)>0} [\nu'(F_j) - 1] \geq t+1$.
	In either case we have a contradiction.
\end{mycases}	
	
	This completes the proof of Claim \ref{claim:bstar}
\end{proof}

	Thus, we have a block $B_*$ and a frame $F_z$ 
		so that $B_* \in F_z$,
		$\nu'(F_z) = 0$,
		and every 2-block $B_j$ has 
		$B_*, \psi_2(B_j), B_j,$ and $\varphi_2(B_j)$ 
		appearing in the cyclic order of blocks of $X$.
	Fix $B_j$ to be the first 2-block that appears after $B_*$ in the cyclic order.
	We will now prove that $a + b + c \geq 3$.
	
	Consider $\psi_2(B_j)$.
	Observe that $\varphi_2^{-1}(\psi_2(B_j)) = \emptyset$, 
		by the choice of $B_*$ and $B_j$.
	Hence, $a \geq |\psi_2(B_j)| - 4$.
	If $|\psi_2(B_j)| \geq 7$, then $a \geq 3$.
	Thus, $|\psi_2(B_j)| \in \{5,6\}$ and $\psi_2^{-1}(\psi_2(B_j)) = \{B_j\}$.
	
	Consider the frame $F_{j-t+1}$, whose last block is $B_j$.
	By the choice of $B_j$, all blocks in $F_{j-t+1} \setminus \{B_j\}$ 
		have size at least three, so $\sigma(F_{j-t+1}) \geq 3t-1$.
	This implies $\psi_2(B_j) \in F_{j-t+1}$.
	Since $\psi_2(B_j) \owns x_j - 3t$ and $|\psi_2(B_j)| \leq 6$, 
		there are at least $3t-4$ elements strictly between $\psi_2(B_j)$ and $B_j$
		which must be covered by at most $t-2$ blocks.
	Therefore, there exists some block $B_k$ strictly between $\psi_2(B_j)$ and $B_j$
		with $|B_k| \geq 4$.
	Select $B_k$ to be the first such block appearing after $\psi_2(B_j)$.

		
\begin{mycases}
	\case{$|\psi_2(B_j)| = 6$.} \label{case:BJEQ6}
	This implies $a \geq 2$.
	If $|B_k| \geq 5$, by choice of $B_j$ we have $\varphi_2^{-1}(B_k) = \emptyset$ and 
		$a \geq 3$.
	Therefore, $|B_k| = 4$ and $\psi_4(B_k)$ is a block of order at least four.
	If $|\psi_4(B_k)| \geq 5$, then $\varphi_2^{-1}(\psi_4(B_k)) = \emptyset$ and
		$a \geq 3$.
	Otherwise, $|\psi_4(B_k)| = 4$, and the frame $F_i$ starting at $B_i = \psi_4(B_k)$
		also contains $\psi_2(B_j)$ and $B_k$.
	Thus, $c = 1$ and $a + c \geq 3$.			
	
\case{$|\psi_2(B_j)| = 5$ and  $|B_k| \geq 5$.}\label{case:AK}
	Note that $\varphi_2^{-1}(B_k) = \emptyset$ by choice of $B_j$,
		which implies that $a \geq 2$.
	If $|B_k|\geq 6$, then $a \geq 3$; hence $|B_k| = 5$.
	Let $B_i = \psi_2(B_j)$ and consider the set
		$N_k = \{ x_k - 3t, x_k -3t+ 1, x_k - 3t + 5, x_k - 3t + 6\}$.
	The 
		elements in $N_k$ are non-neighbors with $x_k$ or $x_{k+1}$. 
	Since $X$ is a clique, $X$ is disjoint from 
		$N_k$.
	We must consider which elements in 
		$A_k = \{x_k-3t+2,x_k-3t+3,x_k-3t+4\}$
		are contained in $X$.
	If $B_*$ appears before $A_k$, then since $B_j$ is the first 2-block after $B_*$,
		there is at most one element of $X$ in $A_k$.
	If $B_*$ appears after $A_k$ and two elements of $A_k$ are in $X$, then they form a 2-block $B_{j'}$
		with $\varphi_2(B_{j'}) = B_k$, contradicting the choice of $B_*$.
	Hence, $|X \cap A_k| \leq 1$ and the elements from $X$ in 
		$A_k$ form either blocks of size at least five
		or two consecutive blocks of order at least four.
		
\begin{casefig}
			\scalebox{\casefigratio}{\begin{lpic}[]{"figs-unique/Case2"(,22.5mm)}
			   \lbl[b]{13.5,16.5;$A_k$}
			   \lbl[t]{56,19.5;$B_i = \psi_2(B_j)$}
			   \lbl[t]{82,19; $B_k$}
			   \lbl[t]{106,19;$B_j$}
			\end{lpic}}
	\caption{\label{fig:case:AK}Claim \ref{claim:itIScomplicated}, Case \ref{case:AK}.}
\end{casefig}

\begin{subcases}
	\subcase{$A_k \cap X = \emptyset$.}\label{subcase:AkXempty}
		Let $B_\ell$ be the block containing $x_k - 3t$.
		Note that $|B_\ell| \geq 8$.
		If $\varphi_2^{-1}(B_\ell) = \emptyset$, then $a \geq 4$.
		Otherwise $\varphi_2^{-1}(B_\ell) \neq \emptyset$, and  $B_*$ appears between $B_\ell$ and $B_i$.
		Then, there are at most $3t-7$ elements between $B_\ell$ and $B_k$.
		Since $|B_*| \geq 1$, $|B_i| \geq 5$, and 
			all other blocks have size at least three, 
			the $t-2$ blocks after $B_\ell$ 
			cover at least $3t-6$ elements.
		Thus, every frame containing $B_*$ (including $F_z$) must also contain $B_\ell$ or $B_k$.
		This implies that $\nu'(F_z) \neq 0$, a contradiction.

\begin{casefig}
			\scalebox{\casefigratio}{\begin{lpic}[]{"figs-unique/Case2.ii"(,22.5mm)}
			   \lbl[t]{10,20.5;$B_{\ell_1}$}
			   \lbl[t]{5,4;$F_{\ell_1}$}
			   \lbl[t]{26,20.5;$B_{\ell_2}$}
			   \lbl[t]{62,21.5;$B_i = \psi_2(B_j)$}
			   \lbl[t]{88,19; $B_k$}
			   \lbl[t]{112,19;$B_j$}
			\end{lpic}}
	\caption{\label{fig:subcase:AkX3t3}Claim \ref{claim:itIScomplicated}, Case \ref{subcase:AkX3t3}.}
\end{casefig}
	\subcase{$A_k \cap X = \{ x_k - 3t + 3\}$.}\label{subcase:AkX3t3}
		Then, the block starting at $x_k - 3t+3$ and the block preceding it have size at least four.
		These two blocks (call them $B_{\ell_1}$ and $B_{\ell_2}$) and $\psi_2(B_j)$ are contained in a single frame, $F_{\ell_1}$, so $c = 1$ and $a + c \geq 3$.

	\subcase{$A_k \cap X \neq \{ x_k - 3t  + 3\}$ and $B_*$ appears before $A_k$.}
		\label{subcase:AkXNOT3t3Before}
		Thus, the element in $A_k \cap X$ is either the first element in a block of size at least five
			or is the first element following a block of size at least five.
		In either case, this block, $B_\ell$, has $\varphi_2^{-1}(B_\ell) = \emptyset$, by the choice of $B_*$ and $B_j$.
		This implies $a \geq 3$.

	\subcase{$A_k \cap X \neq \{ x_k - 3t  + 3\}$ and $B_*$ appears between $A_k$ and $B_i$.}
		\label{subcase:AkXNOT3t3Between}
		Let $B_\ell$ be the block of size at least five that is guaranteed 
			by the element in $A_k \cap X$.
		There are at most $3t - 3$ elements between $B_\ell$ and $B_k$.
		Since $|B_*| \geq 1$, $|B_i| = 5$, and all other blocks between $B_\ell$ and $B_k$
			have size at least three,
			the $t-1$ blocks following $B_\ell$ cover at least $3t - 3$ elements.
		Thus, any frame containing $B_*$ also contains either 
			$B_\ell$ or $B_k$, and thus has positive charge.
		This includes $F_z$, but $\nu'(F_z) = 0$, a contradiction.
		
\end{subcases}

	\case{$|\psi_2(B_j)| = 5$ and all blocks between 
		$\psi_2(B_j)$ and $B_j$ have size at most four.} 
	Since there are $3t - 4$ elements strictly between $\psi_2(B_j)$ and $B_j$
		that must be covered by at most $t-2$ blocks of size at least three,
		there are at least two 4-blocks $B_k, B_{k'}$ between $\psi_2(B_j)$ and $B_j$.
	Thus, the blocks $B_{\ell_0} = \psi_2(B_j), B_{\ell_1} = B_k,$ and $B_{\ell_2} = B_{k'}$	
		are contained in a single frame and $c = 1$ giving $a + c \geq 3$.
	\end{mycases}
	

This completes the proof of Claim \ref{claim:itIScomplicated}.
\end{proof}

	Claims \ref{claim:complicateddischarging} and \ref{claim:itIScomplicated} imply that 
		an $r$-clique $X$ in $G + \{0,1\}$ has no 2-blocks.
	By Claim \ref{claim:threegenunique}, $G+\{0,1\}$ has a unique $r$-clique
		and hence $G$ is $r$-primitive.
\end{proof}

\section{Constructions of Sporadic Graphs}
\label{sec:constructions}

\def\constructionheight{1.75in}
\def\twoconstructionheight{2.5in}
\def\threeconstructionheight{1.75in}

In this section, we give explicit constructions for all known $r$-primitive graphs, 
	including those found in previous work.
It is a simple computation to verify that every graph presented is uniquely $K_r$-saturated, so proofs are omitted.
In addition to the descriptions given here, all graphs are available online\footnote{Graphs available in graph6 format or as adjacency matrices at \url{http://www.math.unl.edu/~shartke2/math/data/data.php}.}.

\subsection{Uniquely $K_4$-Saturated Graphs}

\begin{construction}[Cooper ~\cite{CooperWenger}, Figure \ref{fig:g10a}]\label{const:G10}
	$G_{10}$ is the graph built from two 5-cycles 
		$a_0$, $a_1$, $a_2$, $a_3$, $a_4$ and $b_0$, $b_1$, $b_2$, $b_3$, $b_4$ 
		where $a_i$ is adjacent to $b_{2i-1}, b_{2i},$ and $b_{2i+1}$.
\end{construction}

\begin{construction}[Collins~\cite{CooperWenger}, Figure \ref{fig:g12a}]\label{const:G12}
	The graph $G_{12}$ is the vertex graph of the icosahedron with a perfect matching added between antipodal vertices.
	Another description takes vertices $v_0, v_1$ and two 5-cycles $u_{j,0},\dots,u_{j,4}$ ($j \in \{0,1\}$)
		with $v_j$ adjacent to $v_{j+1}$ and $u_{j,i}$ for all $i \in [5]$ and
		$u_{0,i}$ adjacent to $u_{1, i}$, $u_{1,i+1}$, and $u_{1,i+3}$ for all $i \in \Z_5$.
\end{construction}

\begin{figure}[htp]
	\centering
	\mbox{
		\subfigure[\label{fig:g10a}Construction \ref{const:G10}, $G_{10}$.]{
		\begin{lpic}[]{"figs-unique/uk4-10b"(60mm,)}
		\lbl[]{35,125;$a_0$}
		\lbl[]{130,125;$a_1$}
		\lbl[]{230,125;$a_2$}
		\lbl[]{87,8;$a_3$}
		\lbl[]{180,8;$a_4$}
		\lbl[]{36,50;$b_0$}
		\lbl[]{83,85;$b_1$}
		\lbl[]{130,50;$b_2$}
		\lbl[]{178,85;$b_3$}
		\lbl[]{225,50;$b_4$}
		\end{lpic}
		}	
		\qquad
		\subfigure[\label{fig:g12a}Construction \ref{const:G12}, $G_{12}$.]{\includegraphics[width=40mm]{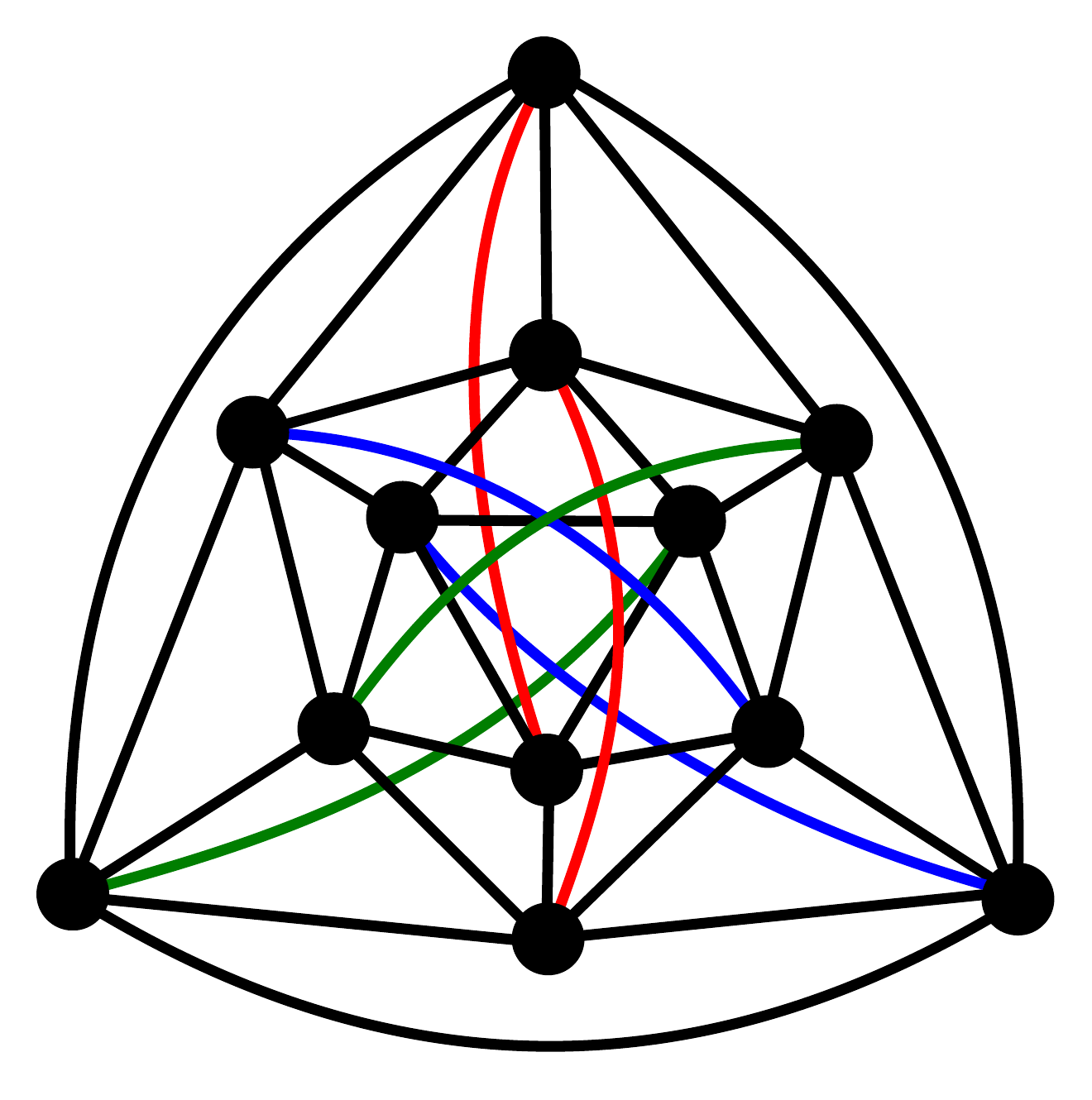}}
	}
	\mbox{
		\subfigure[\label{fig:g13a}Construction \ref{const:G13A}, $G_{13}$.]{\includegraphics[width=40mm]{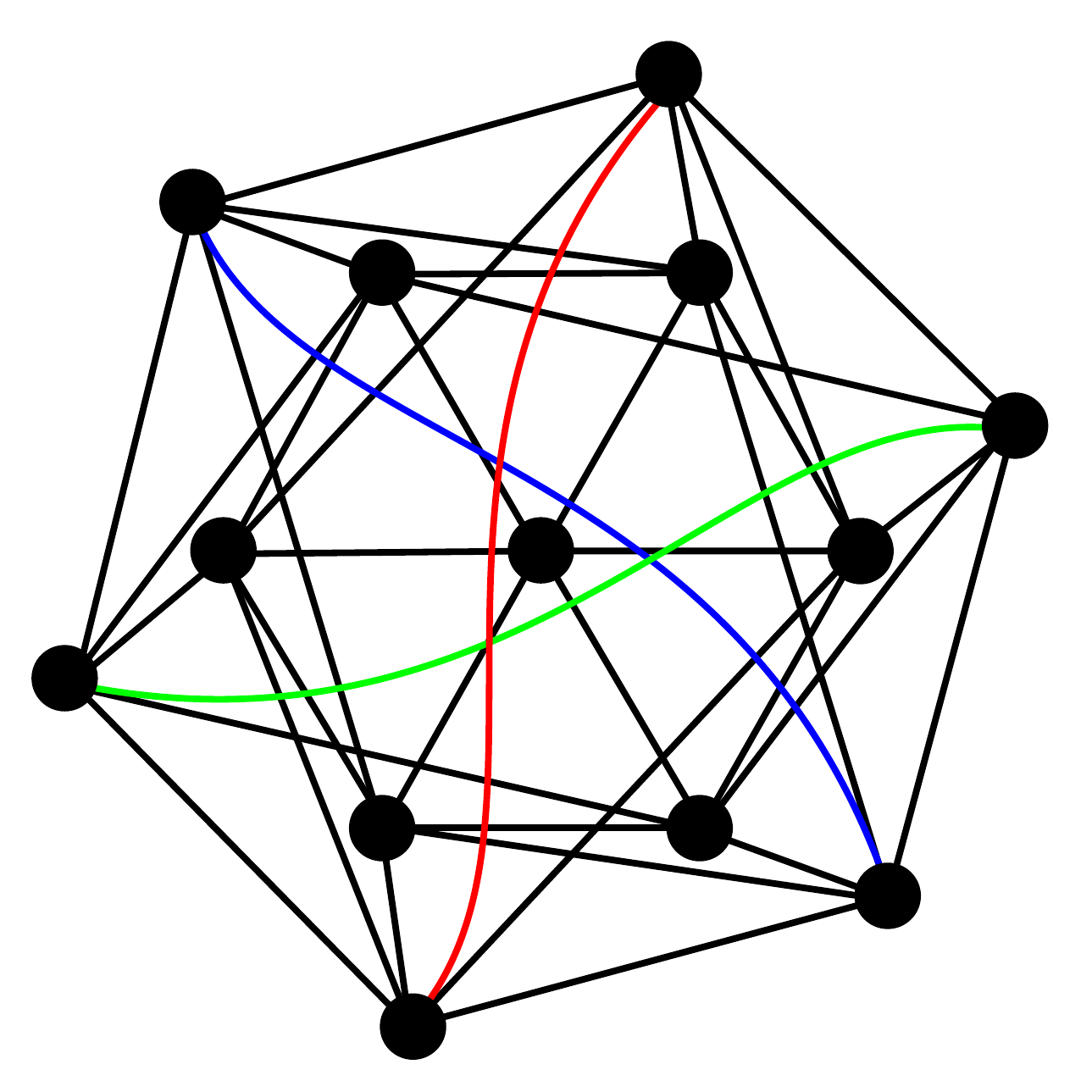}}
		\qquad
		\subfigure[\label{fig:g13b}Construction \ref{const:G13B}, Paley$(13)$.]{\hspace{1cm}\includegraphics[width=40mm]{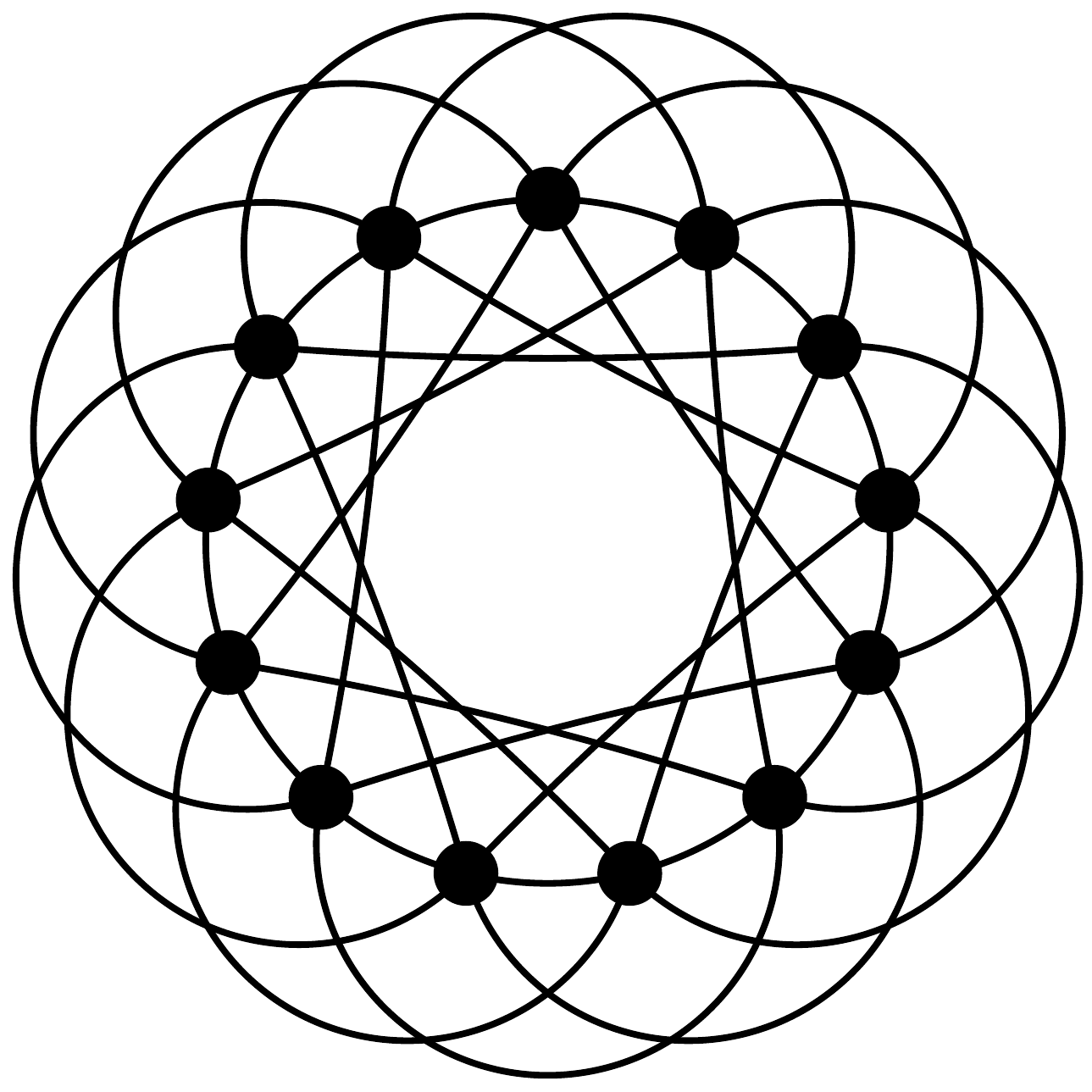}\hspace{1cm}}
	}
	\caption{\label{fig:uk4sat}Uniquely $K_4$-saturated graphs on $10$--$13$ vertices.}
\end{figure}

\begin{construction}[Figure \ref{fig:g13a}]\label{const:G13A}
	$G_{13}$ is given by vertices $x, y_1,\dots,y_6, z_1,\dots, z_6$, 
		where $x$ is adjacent to every $y_i$,
		$y_i$ and $y_{i+1}$ are adjacent for all $i \in \{1,\dots, 6\}$,
		and $z_i$ and $z_{i+1}$ are adjacent for all $i \in \{1,\dots, 6\}$.
	Further, $z_i$ is adjacent to $z_{i+3}$, $y_i$, $y_{i-1}$, and $y_{i+2}$.
\end{construction}

\begin{construction}[Figure \ref{fig:g13b}]\label{const:G13B}
	The Paley graph \cite{Paley} of order 13, Paley$(13)$, is isomorphic to the 
		Cayley complement $\cc{\Z_{13}}{\{1, 3, 4 \}}$.
\end{construction}

\begin{construction}[Figure \ref{fig:g18a}]\label{const:G18A}
Let $H$ be the graph on vertices $x, v_1,\dots,v_5$ with $x$ adjacent to every $v_i$ 
	and the vertices $v_1,\dots, v_5$ form a 5-cycle.
Note that $H$ is uniquely $K_4$-saturated, as $v_1,\dots,v_5$ induce $C_5$, which is 3-primitive.
$G_{18}^{(A)}$ has vertex set $V = \{1,2,3\}\times\{x,v_1,v_2,v_3,v_4,v_5\}$.
A vertex $(a,x)$ or $(a, v_i)$ in $V$ considers the number $a$ modulo three and $i$ modulo $5$.
The vertices $(a,x)$ with $a \in \{1,2,3\}$ form a triangle.
For each $a$, $(a,x)$ is adjacent to $(a,v_i)$ for each $i$ but is not adjacent to $(a+1,v_i)$ or $(a+2,v_i)$ for any $i$.
For each $a$ and $i$, the vertex $(a,v_i)$ is adjacent to $(a,v_{i-1})$ and $(a,v_{i+1})$ (within the copy of $H$)
	and also $(a+1,v_{i+2}), (a+1,v_{i-2}), (a-1,v_{i+2}), (a-1,v_{i-2})$ (outside the copy of $H$).
\end{construction}

\begin{construction}[Figure \ref{fig:g18b}]\label{const:G18B}
	Let $G_{18}^{(B)}$ have vertex set $\Z_2 \times \Z_9$ where each coordinate is taken modulo two and nine, respectively.
	For fixed $a$, the vertices $(a,i)$ and $(a,j)$ are adjacent if and only if $|i-j| \leq 2$.
	For fixed $i$, the vertex $(0,i)$ is adjacent to $(1, 2i), (1,2i+4)$ and $(1,2i+5)$.
	Conversely, for fixed $j$ the vertex $(1,j)$ is adjacent to $(0,5j), (0,5j+7)$ and $(0,5j+2)$.
\end{construction}

\begin{figure}[p]
	\centering
	\scalebox{0.66}{
	\begin{lpic}[]{"figs-unique/G18Alpic"(150mm,)}
		\lbl[b]{37.5,92.5;$(1,x)$}
		\lbl[b]{107,77;$(2,x)$}
		\lbl[b]{174,92.5;$(3,x)$}
		\lbl[t]{37.5,41;$(1,v_1)$}
		\lbl[t]{106.75,41;$(2,v_1)$}
		\lbl[t]{174,41;$(3,v_1)$}
		\lbl[br]{17,35;$(1,v_5)$}
		\lbl[br]{87,35;$(2,v_5)$}
		\lbl[br]{154,35;$(3,v_5)$}
		\lbl[tr]{22,9;$(1,v_4)$}
		\lbl[tr]{93,9;$(2,v_4)$}
		\lbl[tr]{159,9;$(3,v_4)$}
		\lbl[tl]{55,9;$(1,v_3)$}
		\lbl[tl]{121,9;$(2,v_3)$}
		\lbl[tl]{190,9;$(3,v_3)$}
		\lbl[tl]{58,32;$(1,v_2)$}
		\lbl[tl]{127,32;$(2,v_2)$}
		\lbl[tl]{195,32;$(3,v_2)$}
	\end{lpic}
	}
	
	\includegraphics[height=1em]{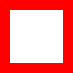} --- $\{ (2,v_1)\} \cup \left(N((2,v_1)) \cap \{ (j,v_i) : j \in \{1,3\}, i \in \{1,\dots,5\}\}\right)$.
	
	\caption{\label{fig:g18a}Construction \ref{const:G18A}, $G_{18}^{(A)}$, is 4-primitive, 7-regular, on 18 vertices.}
\end{figure}

\begin{figure}[p]
	\centering
	\scalebox{0.66}{
	\begin{lpic}[]{"figs-unique/G18Blpic"(150mm,)}
		\lbl[tr]{23,43; $(0,6)$}
		\lbl[r]{20,75; $(0,7)$}
		\lbl[br]{34,100; $(0,8)$}
		\lbl[tr]{45,15; $(0,5)$}
		\lbl[tl]{90,15; $(0,4)$}
		\lbl[tl]{112,43; $(0,3)$}
		\lbl[l]{114,76; $(0,2)$}
		\lbl[bl]{95,100; $(0,1)$}
		\lbl[b]{67,112; $(0,0)$}
		
		\lbl[tr]{159,43; $(1,6)$}
		\lbl[r]{158,76; $(1,7)$}
		\lbl[br]{170,100; $(1,8)$}
		\lbl[tr]{181,15; $(1,5)$}
		\lbl[tl]{226,15; $(1,4)$}
		\lbl[tl]{248,43; $(1,3)$}
		\lbl[l]{252,75; $(1,2)$}
		\lbl[bl]{231,100; $(1,1)$}
		\lbl[b]{203,112; $(1,0)$}
	\end{lpic}
	}
	
	\includegraphics[height=1em]{figs-unique/RedSquare} --- $\{ (0,1)\} \cup \left(N((0,1)) \cap \{ (1,i) : i \in \Z_9\}\right)$.
	
	\includegraphics[height=1em]{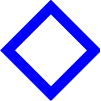} --- $\{ (1,0)\} \cup \left(N((1,0)) \cap \{ (0,i) :  i \in \Z_9\}\right)$.
	
	\caption{\label{fig:g18b}Construction \ref{const:G18B}, $G_{18}^{(B)}$, is 4-primitive, 7-regular, on 18 vertices.}
\end{figure}


\subsection{Uniquely $K_5$-Saturated Graphs}

\begin{construction}[Figure \ref{fig:g16a}]\label{const:G16A}
	Let $G_{16}^{(A)}$ have vertex set $\{v_1, v_2\} \bigcup \left(\{1,2\} \times \Z_7\right)$.
	The vertices $v_1$ and $v_2$ are adjacent.
	For each $j \in \{1,2\}$ and $i \in \Z_7$, $v_j$ is adjacent to $(j,i)$ and $(j,i)$ is adjacent to $(j, i+1), (j,i+2), (j,i-1)$ and $(j,i-2)$.
	(Hence, the subgraph induced by $(j,i)$ for fixed $j$ and $i \in \Z_7$ is isomorphic to $C_7^2$.)
	For $i \in \Z_7$, the vertex $(1,i)$ is adjacent to $(2, 2i), (2, 2i+1), (2, 2i-1)$, and $(2,2i-3)$.
	Conversely, for $i \in \Z_7$, the vertex $(2, i)$ is adjacent to $(1, 4i), (1, 4i-2), (1, 4i+3)$, and $(1, 4i-3)$.
\end{construction}

An interesting feature of $G_{16}^{(A)}$ is that it is not regular: $v_1$, and $v_2$ have degree 8 while the other vertices have degree 9.
This is a counterexample to previous thoughts that all uniquely $K_r$-saturated graphs with no dominating vertex were regular.

\begin{construction}[Figure \ref{fig:g16b}]\label{const:G16B}
	The graph $G_{16}^{(B)}$ has vertex set 
			$\{ x\} %
			\cup \{u_i : i \in \Z_3 \} %
			\cup \{v_j : j \in \Z_6 \} %
			\cup \{ z_{k,i} : k \in \{0,1\}, i \in \Z_3 \}$.
	The vertex $x$ is adjacent to $u_i$ for all $i \in \Z_3$ and $v_j$ for all $j \in \Z_6$.
	There are no edges among the vertices $u_i$.
	The vertices $v_j$ form a cycle, with an edge $v_jv_{j+1}$ for all $j \in \Z_6$.
	The vertices $z_{k,i}$ form a complete bipartite graph,
		with an edge $z_{0,i}z_{1,j}$  for all $i, j \in \Z_3$.
	For $i \in \{0,1,2\}$, 
		the vertex $u_{i}$ is adjacent to 
			$v_{2i - 1}$, $v_{2i}$, $v_{2i+1}$, and $v_{2i+2}$,
		and adjacent to $z_{k, i + 1}$ and $z_{k, i - 1}$ for $k \in \{0, 1\}$.
	For $i \in \{0,1,2\}$, the vertex $z_{0,j}$ is adjacent to 
		$v_{2i}$, $v_{2i+1}$, $v_{2i+2}$, and $v_{2i+4}$,
		while the vertex $z_{1,i}$ is adjacent to $v_{2i-1}$, $v_{2i}$, $v_{2i+1}$, and $v_{2i+3}$.
\end{construction}

\begin{figure}[p]
	\centering
	\scalebox{0.66}{
	\begin{lpic}[]{"figs-unique/G16Alpic"(150mm,)}
		\lbl[br]{31,90;$(1,6)$}
		\lbl[tr]{18,45;$(1,5)$}
		\lbl[tr]{45,15;$(1,4)$}
		\lbl[tl]{92,15;$(1,3)$}
		\lbl[tl]{116,45;$(1,2)$}
		\lbl[bl]{108,90;$(1,1)$}
		\lbl[b]{70,110;$(1,0)$}
		\lbl[t]{70,67;$v_1$}
		
		\lbl[br]{163,90;$(2,6)$}
		\lbl[tr]{155,45;$(2,5)$}
		\lbl[tr]{178,15;$(2,4)$}
		\lbl[tl]{226,15;$(2,3)$}
		\lbl[tl]{251,45;$(2,2)$}
		\lbl[bl]{242,90;$(2,1)$}
		\lbl[b]{203,110;$(2,0)$}
		\lbl[t]{202,67;$v_2$}
	\end{lpic}
	}
	
	\includegraphics[height=1em]{figs-unique/RedSquare} --- $\{ (1,1)\} \cup \left(N((1,1)) \cap \{ (2,i) :  i \in \{0,1,\dots,6\}\}\right)$.
	
	\includegraphics[height=1em]{figs-unique/BlueDiamond} --- $\{ (2,0)\} \cup \left(N((2,0)) \cap \{ (1,i) : i \in \{0,1,\dots,6\}\}\right)$.
	\caption{\label{fig:g16a}Construction \ref{const:G16A}, $G_{16}^{(A)}$, is 5-primitive \emph{and irregular}, on 16 vertices.}
\end{figure}

\begin{figure}[p]
	\centering
	\scalebox{0.66}{
	\begin{lpic}[]{"figs-unique/G16Blpic"(150mm,)}
		\lbl[]{69.75,61.5;$x$}
		\lbl[b]{70,93;$u_0$}
		\lbl[l]{100,44;$u_1$}
		\lbl[r]{40,44;$u_2$}
		\lbl[tl]{100,115;$v_0$}
		\lbl[l]{126,62;$v_1$}
		\lbl[tl]{100,15;$v_2$}
		\lbl[tr]{40,15;$v_3$}
		\lbl[r]{12,62;$v_4$}
		\lbl[tr]{40,115;$v_5$}
		\lbl[]{170,100;$z_{0,0}$}
		\lbl[]{170,62;$z_{0,1}$}
		\lbl[]{170,25;$z_{0,2}$}
		\lbl[]{255,100;$z_{1,0}$}
		\lbl[]{255,62;$z_{1,1}$}
		\lbl[]{255,25;$z_{1,2}$}
	\end{lpic}
	}
	
	\includegraphics[height=1em]{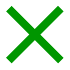} --- $\{ u_0 \} \cup \left(N(u_0) \cap \{ z_{j,i} : j\in \{0,1\},  i \in \Z_3\}\right)$.
	
	\includegraphics[height=1em]{figs-unique/RedSquare} --- $\{ z_{0,0} \} \cup \left(N(z_{0,0}) \cap \{ v_i : i \in \Z_6\}\right)$.
	
	\includegraphics[height=1em]{figs-unique/BlueDiamond} --- $\{ z_{1,0} \} \cup \left(N(z_{1,0}) \cap \{ v_i :  i \in \Z_6\}\right)$.
	\caption{\label{fig:g16b}Construction \ref{const:G16B}, $G_{16}^{(B)}$, is 5-primitive, 9 regular, on 16 vertices.}
\end{figure}

\subsection{Uniquely $K_6$-Saturated Graphs}

\begin{figure}[p]
	\centering
	\scalebox{0.6}{
	\begin{lpic}[]{"figs-unique/G15Alpic"(100mm,)}
		\lbl[]{10,84;$x$}
		
		\lbl[]{12,33;$v_1$}
		\lbl[]{12,68;$v_0$}
		
		\lbl[]{64,80;$u_0$}
		\lbl[]{64,62;$u_1$}
		\lbl[]{64,40;$u_2$}
		\lbl[]{64,21;$u_3$}
		
		\lbl[]{79,80;$c_0$}
		\lbl[]{119,80;$c_1$}
		\lbl[]{119,56;$c_2$}
		\lbl[]{79,56;$c_3$}
		
		\lbl[]{78,40;$q_0$}
		\lbl[]{78,13;$q_3$}
		\lbl[]{119,13;$q_2$}
		\lbl[]{119,40;$q_1$}
	\end{lpic}
	}
	
	\includegraphics[height=1em]{figs-unique/RedSquare} --- $\{ u_3 \} \cup \left(N(u_3) \cap \{ q_i :  i \in [4]\}\right)$.
	
	\includegraphics[height=1em]{figs-unique/BlueDiamond} --- $\{ c_1 \} \cup \left(N(c_1) \cap \{ q_i : i \in [4]\}\right)$.
	
	\includegraphics[height=1em]{figs-unique/GreenEX} --- $\{ v_1 \} \cup \left(N(v_1) \cap \{ c_i, q_i : i \in [4]\}\right)$.
	
	\caption{\label{fig:g15a}Construction \ref{const:G15A}, $G_{15}^{(A)}$, is 6-primitive, 10 regular, on 15 vertices.}
\end{figure}

\begin{construction}[Figure \ref{fig:g15a}]\label{const:G15A}
	The graph $G_{15}^{(A)}$ has vertices $x$, $v_0$, $v_1$, $u_1$, $\dots$, $u_4$, $c_1$, $\dots$, $c_4$, $q_1$, $\dots$, $q_4$.
	The vertex $x$ dominates all but the $q_i$'s.
	The vertices $v_0, v_1$ are adjacent and dominate the $u_i$'s.
	Also, $v_i$ dominates $c_{2i}, c_{2i+1}, q_{2i}, q_{2i+1}$ for each $i \in \Z_2$.
	The vertices $u_0$ and $u_2$ are adjacent as well as $u_1$ and $u_3$.
	The vertices $u_i$ dominate the vertices $c_j$.
	Also, the vertex $u_i$ is adjacent to $q_j$ if and only if $i \neq j$.
	The vertices $c_1,\dots,c_4$ form a cycle with edges $c_ic_{i+1}$.
	The vertices $q_1,\dots, q_4$ form a clique.
	The vertices $c_i$ and $q_j$ are adjacent if and only if $i \neq j$.
\end{construction}

\begin{figure}[p]
	\centering
	\scalebox{0.6}{
	\begin{lpic}[]{"figs-unique/G15Blpic"(150mm,)}
		\lbl[]{61,103;$q_0$}
		\lbl[]{103,72;$q_1$}
		\lbl[]{90,20;$q_2$}
		\lbl[]{33,20;$q_3$}
		\lbl[]{17,72;$q_4$}
		
		\lbl[]{161,114;$c_{1,0}$}
		\lbl[]{190,95;$c_{1,1}$}
		\lbl[]{182,72;$c_{1,2}$}
		\lbl[]{143,72;$c_{1,3}$}
		\lbl[]{133,95;$c_{1,4}$}
		
		\lbl[]{161,39;$c_{2,0}$}
		\lbl[]{189,30;$c_{2,1}$}
		\lbl[]{182,10;$c_{2,2}$}
		\lbl[]{143,10;$c_{2,3}$}
		\lbl[]{133,30;$c_{2,4}$}
	\end{lpic}
	}
	
	\includegraphics[height=1em]{figs-unique/RedSquare} --- $\{ q_0 \} \cup \left(N( q_0 ) \cap \{ c_{j,i} : j \in \{1,2\}, i \in \Z_5\}\right)$.
	
	\caption{\label{fig:g15b}Construction \ref{const:G15B}, $G_{15}^{(B)}$, is 6-primitive, 10 regular, on 15 vertices.}
\end{figure}

\begin{construction}[Figure \ref{fig:g15b}]\label{const:G15B}
	The graph $G_{15}^{(B)}$ has vertices $q_i, c_{1,i}$, and $c_{2,i}$ for each $i \in \Z_5$.
	The subgraph induced by vertices $q_i$ is a 5-clique.
	For each $j \in \{1,2\}$, the subgraph induced by vertices $c_{j,i}$ for $i \in \Z_5$ is isomorphic to $C_5$
		with edges $c_{j,i}c_{j,i+1}$ between consecutive elements.
	For each $i, i' \in \Z_5$, there is an edge between $c_{1,i}$ and $c_{2,i'}$.
	For each $i \in \Z_5$, the vertex $q_i$ is adjacent to $c_{1,i}, c_{1,i-1},$ and $c_{1,i+1}$ 
		as well as $c_{2,2i}$, $c_{2,2i-1}$, and $c_{2,2i+2}$.
\end{construction}

\begin{figure}[p]
	\centering
	\scalebox{0.6}{
	\begin{lpic}[]{"figs-unique/G16Clpic"(150mm,)}
		\lbl[]{67,105;$c_{0}$}
		\lbl[]{105,90;$c_{1}$}
		\lbl[]{118,58;$c_{2}$}
		\lbl[]{105,25;$c_{3}$}
		\lbl[]{67,10;$c_{4}$}
		\lbl[]{28,25;$c_{5}$}
		\lbl[]{17,58;$c_{6}$}
		\lbl[]{28,90;$c_{7}$}
		
		\lbl[]{140,108;$q_{1,0}$}
		\lbl[]{188,108;$q_{1,1}$}
		\lbl[]{188,77;$q_{1,2}$}
		\lbl[]{140,77;$q_{1,3}$}
		
		\lbl[]{140,45;$q_{2,0}$}
		\lbl[]{188,45;$q_{2,1}$}
		\lbl[]{188,12;$q_{2,2}$}
		\lbl[]{140,12;$q_{2,3}$}
	\end{lpic}
	}
	
	 \includegraphics[height=1em]{figs-unique/RedSquare} --- $\{ q_{1,0} \} \cup \left(N(q_{1,0}) \cap \{  c_i: i \in \Z_8 \} \right)$.
	
	\includegraphics[height=1em]{figs-unique/GreenEX} --- $\{ q_{1,1}\} \cup \left(N(q_{1,1}) \cap \{ q_{2,i} : i \in \Z_4 \}\right)$.
	
	\includegraphics[height=1em]{figs-unique/BlueDiamond} --- $\{ q_{2,1} \} \cup \left(N(q_{2,1}) \cap \{ c_i : i \in \Z_8 \}\right)$.
	
	\caption{\label{fig:g16c}Construction \ref{const:G16C}, $G_{16}^{(C)}$, is 6-primitive, 10 regular, on 16 vertices.}
\end{figure}

\begin{construction}[Figure \ref{fig:g16c}]\label{const:G16C}
	The graph $G_{16}^{(C)}$ is composed of three disjoint induced subgraphs isomorphic to $K_4, K_4$, and $\overline{C_8}$.
	Let the vertices $q_{0,0}, \dots, q_{0,3}$, and $q_{1,0}, \dots, q_{1,3}$ be the two copies of $K_4$ and vertices $c_0, \dots, c_7$ be the $\overline{C_8}$, 
		where the non-edges are for consecutive elements $(0,i)$ and $(0,i+1)$.
	For $i \in \{0,1,2,3\}$, the vertex $q_{1,i}$ is adjacent to $c_{2i + d}$ for all $d \in \{0,1,2,3,4,5\}$.
	For $i \in \{0,1,2,3\}$, the vertex $q_{2,i}$ is adjacent to $c_{2i + d}$ for all $d \in \{0,1,3,4,5,6\}$.
	For $i \in \Z_4$, the vertex $q_{1,i}$ is adjacent to $q_{2,i + 1}$ and $q_{2,i - 1}$.
\end{construction}

\section*{Acknowledgements}

We thank  David Collins, Joshua Cooper, Bill Kay, and Paul Wenger for sharing their early observations on this problem.
We also thank Jamie Radcliffe for contributing to the averaging argument found in Claim \ref{claim:twogenclique}.


\begin{thebibliography}{99}
\frenchspacing

{\small
\baselineskip 12pt

\bibitem{BEHWPaleySquares} R. D. Baker, G. L. Ebert, J. Hemmeter, A. Woldar, 
	Maximal cliques in the Paley graph of square order,
	\textit{Journal of statistical planning and inference} 56(1),
	pages 33--38 (1996).

\bibitem{BICliqueCirculant} M. Ba{\v{s}}i{\'c}, A. Ili{\'c}, 
	On the clique number of integral circulant graphs,
	\textit{Applied Mathematics Letters},
	22(9), pages 1409--1411 (2009).

\bibitem{BlokhuisPaleySquares} A. Blokhuis, On subsets of GF($q^2$) with square differences, \textit{Indagationes Mathematicae (Proceedings)} 87(4), pages 369--372 (1984).

\bibitem{BDRPaley} I. Broer, D. D\"oman, J. N. Ridley, The clique numbers and chromatic numbers of certain Paley graphs, \textit{Quaestiones Mathematicae} 11(1), pages 91--93 (1988).

\bibitem{BHFractionalRamsey} J. Brown, R. Hoshino, Proof of a conjecture on fractional Ramsey numbers, \textit{Journal of Graph Theory} 63(2), pages 164--178 (2010).


\bibitem{trigraphs}  M. Chudnovsky. Berge trigraphs. \textit{J. Graph Theory}, 53(1), pages 1--55, (2006).

\bibitem{CohenPaley} S. D. Cohen, Clique numbers of Paley graphs, \textit{Quaestiones Mathematicae} 11(2), pages 225--231 (1988).

\bibitem{CooperWenger} D. Collins, J. Cooper, B. Kay, P. Wenger, {\it personal communication} (2011).

\bibitem{CLLWW} J. Cooper, J. Lenz, T. D. LeSaulnier, P. S. Wenger, D. B. West, Uniquely $C_4$-saturated graphs, {\it Graphs and Combinatorics}, to appear (2011).



\bibitem{ErdosHajnalMoon} P. Erd\H{o}s, A. Hajnal, J. W. Moon, A problem in graph theory,
  	{\it The American Mathematical Monthly}
71(10), pages 1107--1110, (1964).


\bibitem{ErdosRenyiSos} P. Erd\H{o}s, A. R\'enyi, V. T. S\'os, ``On a problem of graph theory", {\it Studia Sci. Math. Hungar.} 1, pages 215--235 (1966).


\bibitem{GreenCayley} B. Green, Counting sets with small sumset, and the clique number of random Cayley graphs, \textit{Combinatorica} 25(3), pages 307--326 (2005).

\bibitem{HRnauty}  S. G. Hartke, A. Radcliffe. MckayÕs canonical graph labeling algorithm. In {\it Communicating Mathematics}, volume 479 of {\it Contemporary Mathematics}, pages 99--111. American Mathematical Society, 2009.

\bibitem{HoffmanSingleton} A. J. Hoffman, R. R. Singleton, On Moore graphs with diameters 2 and 3. {\it IBM Journal of Research and Development} 4(5) (1960).

\bibitem{HoshinoCirculant} R. Hoshino, Independence polynomials of circulant graphs, Ph.D. Thesis, \textit{Dalhousie University} (2007).

\bibitem{KSUnitaryCayley} W. Klotz, T. Sander, Some properties of unitary Cayley graphs, 
	\textit{Electronic Journal of Combinatorics} 14, {\#}R45 (2007).

\bibitem{InducedSaturation} R. Martin and J. Smith, Induced saturation number, preprint, (2011).

\bibitem{nauty}
B. D. McKay, nauty User's Guide (v. 2.4),
Dept. Computer Science, Austral. Nat. Univ. (2006).


\bibitem{cliquer} S. Niskanen, P. R. J. \"Osterg\r{a}rd, {\it Cliquer} user's guide, version 1.0. {\it Technical Report} T48, 
Communications Laboratory, Helsinki University of Technology, Espoo, Finland (2003).


\bibitem{OrbitalBranching}
J.~Ostrowski, J.~Linderoth, F.~Rossi, S.~Smriglio.
\newblock Orbital branching.
\newblock In {\em IPCO '07: Proceedings of the 12th international conference on
  Integer Programming and Combinatorial Optimization}, volume 4513 of {\em
  LNCS}, pages 104--118, Berlin, Heidelberg, (2007).



\bibitem{ConstraintOrbitalBranching}
J.~Ostrowski, J.~Linderoth, F.~Rossi, S.~Smriglio.
\newblock Constraint orbital branching.
\newblock In A.~Lodi, A.~Panconesi, and G.~Rinaldi, editors, {\em Integer
  Programming and Combinatorial Optimization, 13th International Conference,
  IPCO 2008, Bertinoro, Italy, May 26-28, 2008, Proceedings}, volume 5035 of
  {\em Lecture Notes in Computer Science}. Springer, (2008).



\bibitem{Paley} R.~E.~A.~C.~Paley, On Orthogonal Matrices, {\it J. Math. Physics} \textbf{12}, pages 311-320 (1933).

\bibitem{OpenScienceGrid}
R.~Pordes, D.~Petravick, B.~Kramer, D.~Olson, M.~Livny, A.~Roy, P.~Avery,
  K.~Blackburn, T.~Wenaus, et~al.
\newblock {The {O}pen {S}cience {G}rid}.
\newblock In {\em Journal of Physics: Conference Series}, volume~78, pages
  12--57. IOP Publishing, (2007).
  


  
\bibitem{TreeSearch} D. Stolee, \emph{TreeSearch} User Guide, available at \url{http://github.com/derrickstolee/TreeSearch} (2011)
  

\bibitem{WeitzelMS}
D.~J. Weitzel.
\newblock {\em Campus Grids: A framework to facilitate resource sharing}.
\newblock Masters thesis, University of Nebraska--Lincoln, (2011).


\bibitem{Wenger} P. S. Wenger, Uniquely $C_k$-saturated graphs, {\it in preparation}.

\bibitem{WengerC8} P. S. Wenger, {\it personal communication} (2011).

\bibitem{XXYCirculant} L. Xu, Z. Xia, Y. Yang, Some results on the independence number of circulant graphs $C(n;\{1,k\})$, \textit{OR Trans.} 13(4), pages 65--70, (2009).
}
\end{thebibliography}
\end{document}